\newtheorem{theorem}{Theorem}[section]
\newtheorem{corollary}[theorem]{Corollary}
\newtheorem{lemma}[theorem]{Lemma}
\newtheorem{proposition}[theorem]{Proposition}
\newtheorem{definition}[theorem]{Definition}
\newtheorem{remark}[theorem]{Remark}
\numberwithin{equation}{section}
\newcommand{\reals}{\mathbb{R}}
\newcommand{\lab}{\label}
\newcommand{\floor}[1]{\lfloor{#1}\rfloor}
\definecolor{mve}{rgb}{0.7,0.35,0.15}
\definecolor{brght}{rgb}{0.825,0.2625,0.15}
\definecolor{yello}{rgb}{1,0.925,0.65}
\definecolor{bluu}{rgb}{0.65, 0.95, 1}
\definecolor{bluu2}{rgb}{0.2, 0.5, 0.8}
\DeclareMathOperator*{\esssup}{ess\,sup}
\newenvironment{itemizeXX}
{\begin{list}{$\circ$}
 {\setlength{\leftmargin}{1.5em}
  \setlength{\topsep}{0.5em}
  \setlength{\itemsep}{0.5em}
  \setlength{\labelwidth}{50.0em}}}
 {\end{list}}
 \newenvironment{itemizeXALI}
{\begin{list}{\labelitemi}
 {\setlength{\leftmargin}{0.4em}
  \setlength{\topsep}{0.5em}
  \setlength{\itemsep}{0.5em}
  \setlength{\labelwidth}{50.0em}}}
 {\end{list}}
\newenvironment{itemizeXXALI}
{\begin{list}{$\circ$}
 {\setlength{\leftmargin}{0.5em}
  \setlength{\topsep}{0.5em}
  \setlength{\itemsep}{0.5em}
  \setlength{\labelwidth}{50.0em}}}
 {\end{list}}
\begin{document}

\title[Multiplication in Sobolev Spaces]
      {Multiplication in Sobolev Spaces, Revisited}

\author[A. Behzadan]{A. Behzadan}
\email{a.behzadan@csus.edu}
\address{Department of Mathematics and Statistics\\
        California State University Sacramento\\
        Sacramento CA 95819}

\author[M. Holst]{M. Holst}
\email{mholst@math.ucsd.edu}
\address{Department of Mathematics\\
        University of California San Diego\\
        La Jolla CA 92093}

\thanks{AB was supported by NSF Award~1262982.}
\thanks{MH was supported in part by
        NSF Awards~1262982, 1318480, and 1620366.}

\date{\today}
\keywords{Sobolev spaces, Bessel potential spaces, Triebel-Lizorkin spaces,
          Multiplication, Real interpolation, Complex interpolation}

\begin{abstract}
In this article, we re-examine some of the classical pointwise
multiplication theorems in Sobolev-Slobodeckij spaces, in part motivated by a simple counter-example that illustrates how
certain multiplication theorems fail in Sobolev-Slobodeckij
spaces when a bounded domain is replaced by $\mathbb{R}^n$. We
identify the source of the failure, and examine why the same
failure is not encountered in Bessel potential spaces. To analyze the situation, we begin with a survey of the classical
multiplication results stated and proved in the 1977 article of
Zolesio, and carefully distinguish between the case of spaces
defined on the all of $\mathbb{R}^n$ and spaces defined on a
bounded domain (with e.g.\ a Lipschitz boundary). However, the
survey we give has a few new wrinkles; the proofs we include are
based almost exclusively on interpolation theory rather than
Littlewood-Paley theory and Besov spaces, and some of the results
we give and their proofs, including the results for negative
exponents, do not appear in the literature in this form. We also include a particularly important variation of one
of the multiplication theorems that is relevant to the study of
nonlinear PDE systems arising in general relativity and other
areas. The conditions for multiplication to be continuous in the
case of Sobolev-Slobodeckij spaces are somewhat subtle and
intertwined, and as a result, the multiplication theorems of
Zolesio in 1977 have been cited (more than once) in the standard
literature in slightly more generality than what is actually
proved by Zolesio, and in cases that allow for the construction
of counter-examples such as the one included here.
\end{abstract}

\maketitle
\tableofcontents

\clearpage

\section{Introduction}
 \label{sec:intro}

Let $f\in W^{s_1,p_1}$ and $g\in W^{s_2,p_2}$.
What can be said about the product $fg$?
In particular, to which Sobolev space $W^{s,p}$ does the product $fg$ belong? The answer to this question plays a key role in a number of applications
in analysis, and having a very complete answer to this question is critical
to the modern theory of partial differential equations (PDE).
It is particularly important to the understanding of the solution theory
for elliptic PDE, and it is also central to the design and analysis of
approximations of their solutions using various techniques.
In the modern theory of partial differential equations, PDE are interpreted as equations of the form $Au=f$
where $A$ is an operator between suitable function spaces. In
this view, the existence of a unique solution for all right hand
sides is equivalent to $A$ being bijective. A main difficulty is
in choosing the domain of realization and the codomain of the operator $A$, that is, choosing appropriate function spaces $X$ and $Y$ such that
\begin{enumerate}
\item $A$ can be considered as an operator from $X$ to $Y$ and $f\in
Y$, i.e., we need to ensure that the equation makes sense if we
consider $X$ and $Y$ as the domain and codomain of $A$.
\item $A$ (or a family of approximations of $A$) has ``nice"
properties as an operator (or a family of operators) from $X$ to
$Y$. Here ``nice properties" may refer to any of the
following:
$A$ is continuous, $A$ is compact, $A$ is Fredholm,
$A$ is injective, $A$ is surjective , $A$ satisfies a maximum
principle, etc.
\end{enumerate}
As it turns out, for elliptic equations, using Sobolev spaces (or
weighted Sobolev spaces) as domain and codomain of $A$ helps us to
ensure that $A$ has ``nice" properties. But how to determine
appropriate Sobolev spaces to make sure that the equation makes
sense? This is one of the applications where pointwise multiplication
theorems are particularly important.
The best way to see this is by looking at
a very simple example. Consider the equation $-\Delta u+V u=f$ in
$\Omega \subseteq \mathbb{R}^n$ where $\Omega$ is a domain with smooth boundary. Suppose we want to seek the
unknown function $u$ in the Sobolev space $W^{s,p}$ where $s\geq 2$ and $1<p<\infty$. Having this
assumption, what restrictions do we need to impose on the data
$V$ and $f$? The assumption  $u\in W^{s,p}$ implies that $-\Delta
u \in W^{s-2,p}$. Therefore for the equation to make sense (as an
equality in $W^{s-2,p}$), $f$ and $Vu$ must belong to
$W^{s-2,p}$. So now we need to find those Sobolev spaces
$W^{r,q}$ such that if $V\in W^{r,q}$, then $Vu\in W^{s-2,p}$.
That is, we need to find those exponents $r$ and $q$ for which the
product of a function in $W^{r,q}$ and a function in $W^{s,p}$
belongs to $W^{s-2,p}$.
If one now considers even the simplest nonlinear generalization
of this problem, say $-\Delta u+V u^p=f$
in $\Omega \subseteq \mathbb{R}^n$, then it is immediately clear
that the conditions on the spaces become substantially more complicated,
and multiplication theorems are a critical tool in the
analysis of nonlinear PDE.

There are a number of articles and book chapters that are devoted
to the study of pointwise multiplication in function spaces, e.g.
\cite{jZ77,37}. Unfortunately most references study the question
in the general setting of Triebel-Lizorkin spaces and use
technical tools from Littlewood-Paley theory and theory of Besov
spaces to prove the results. A main feature of this article is
that the key results are proved without any direct reference to
Littlewood-Paley theory and Besov spaces, which makes it
accessible to a wider range of readers. In particular, we give
alternative proofs for a number of results first stated in
\cite{jZ77} for Sobolev spaces with nonnegative exponents. Additionally, we extend those results to Sobolev spaces with negative
exponents. We clearly distinguish between the case of Sobolev
spaces defined on the entire space $\reals^n$ and the case where
Sobolev spaces are defined on a bounded domain.
Lastly, we remark that one of the main tools we use throughout
the paper, namely interpolation theory, is a fascinating topic itself;
we only briefly summarize some of the main ideas and results we need
in the paper in Section~\ref{subsec:importantproperties}.
Much more complete expositions can be found, for example, in~\cite{36}.


{\bf\em Outline of the Paper.} A brief outline of the remainder of the paper is as follows. In Section
\ref{app:spacesbasicdef} we review some of the basic
well-known definitions and facts about Sobolev spaces. In Section
\ref{subsec:importantproperties} summarize some basic facts about interpolation theory and several key properties of Sobolev spaces.
 In Section \ref{subsec:maintheorems} we review a counter-example for
generalized Holder-type inequalities in Sobolev-Slobodeckij
spaces. In sections \ref{sec:bessel}, \ref{sec:integers},
\ref{sec:positives}, and \ref{sec:negatives} we state and prove
the main theorems.

\section{Notation and Definitions}
\label{app:spacesbasicdef}

In this section we briefly review some basic notation and
definitions related to the Sobolev spaces,
with emphasis on fractional order spaces.
Throughout the manuscript we use the notation
$A \preceq B$ to mean $A\leq cB$, where $c$ is a positive constant
that does not depend on the non-fixed parameters appearing in $A$ and $B$.
We use the notation $X\hookrightarrow Y$ to mean $X
\subseteq Y$ and the inclusion map is continuous.


\begin{definition}
Let $k\in \mathbb{N}_0$, $1< p<\infty$. The Sobolev space
$W^{k,p}(\mathbb{R}^n)$ is defined as follows:
\begin{equation*}
W^{k,p}(\mathbb{R}^n)=\{u\in L^p (\mathbb{R}^n):
\|u\|_{W^{k,p}(\mathbb{R}^n)}:=\sum_{|\nu|\leq
k}\|\partial^{\nu}u\|_p<\infty\}
\end{equation*}
For $k\in \mathbb{N}$, the Sobolev space $W^{-k,p}(\mathbb{R}^n)$
is defined as the topological dual of $W^{k,p'}(\mathbb{R}^n)$ where $
\frac{1}{p}+\frac{1}{p'}=1$. That is
$W^{-k,p}(\mathbb{R}^n):=(W^{k,p'}(\mathbb{R}^n))^{*}$ .
\end{definition}
\begin{remark}
\leavevmode
\begin{itemize}
\item For real-valued function $u(x_1,\dots,x_n)$ and multi-index $\nu=(\nu_1,\dots,\nu_n) \in \mathbb{N}_0^n$,
\begin{equation*}
|\nu|:=\nu_1+\dots+\nu_n, \quad
\partial^{\nu}u:=\frac{\partial^{|\nu|}u}{\partial
x_1^{\nu_1}\dots\partial x_n^{\nu_n}},\quad
\|u\|_p:=(\int_{\reals^n}|u|^p dx)^{\frac{1}{p}}\,.
\end{equation*}
\item The Sobolev norm is defined so that $\partial^{\alpha}: W^{k,p}(\mathbb{R}^n)\rightarrow W^{k-|\alpha|,p}(\mathbb{R}^n)$
becomes a continuous operator for $|\alpha|\leq k$. It can be shown that
$C_c^{\infty}(\mathbb{R}^n)$ is dense in $W^{k,p}(\mathbb{R}^n)$.
In fact, $W^{k,p}(\mathbb{R}^n)$ is the completion of the space of
smooth functions with respect to
$\|\cdot\|_{W^{k,p}(\mathbb{R}^n)}$.
\item Clearly, if $k_1\geq k_0$, then $W^{k_1,p}(\mathbb{R}^n)\subseteq
W^{k_0,p}(\mathbb{R}^n)$.
\end{itemize}
\end{remark}
 There are \textbf{nonequivalent} ways to generalize the above
definition to allow noninteger exponents. We can define Sobolev
spaces with noninteger exponents as
\begin{enumerate}
\item Slobodeckij spaces, or,
\item Bessel potential spaces.
\end{enumerate}
There are at least three \textbf{equivalent} methods to define each of the
above spaces:
\begin{enumerate}
\item Classical definition
\item Definition based on interpolation theory
\item Definition based on Littlewood-Paley theory
\end{enumerate}
\textbf{1-Classical Definitions}
\begin{definition}\lab{def8201}
Let $s\in \mathbb{R}$ and $p\in [1,\infty]$. The
Sobolev-Slobodeckij space $W^{s,p}(\mathbb{R}^n)$ is defined as
follows:
\begin{itemize}
\item If $s=k\in \mathbb{N}_0$, $p\in[1,\infty]$,
\begin{equation*}
W^{k,p}(\mathbb{R}^n)=\{u\in L^p (\mathbb{R}^n):
\|u\|_{W^{k,p}(\mathbb{R}^n)}:=\sum_{|\nu|\leq
k}\|\partial^{\nu}u\|_p<\infty\}
\end{equation*}
\item If $s=\theta\in(0,1)$, $p\in[1,\infty)$,
\begin{equation*}
W^{\theta,p}(\mathbb{R}^n)=\{u\in L^p (\mathbb{R}^n):
 |u|_{W^{\theta,p}(\mathbb{R}^n)}:=\big(\int\int_{\mathbb{R}^n\times
\mathbb{R}^n}\frac{|u(x)-u(y)|^p}{|x-y|^{n+\theta p}}dx
dy\big)^{\frac{1}{p}} <\infty\}
\end{equation*}
\item If $s=\theta\in(0,1)$, $p=\infty$,
\begin{equation*}
W^{\theta,\infty}(\mathbb{R}^n)=\{u\in L^{\infty} (\mathbb{R}^n):
 |u|_{W^{\theta,\infty}(\mathbb{R}^n)}:=\esssup_{x,y \in
\mathbb{R}^n, x\neq y}\frac{|u(x)-u(y)|}{|x-y|^{\theta}} <\infty\}
\end{equation*}
\item If $s=k+\theta,\, k\in \mathbb{N}_0,\, \theta\in(0,1)$,
$p\in[1,\infty]$,
\begin{equation*}
W^{s,p}(\mathbb{R}^n)=\{u\in
W^{k,p}(\mathbb{R}^n):\|u\|_{W^{s,p}(\mathbb{R}^n)}:=\|u\|_{W^{k,p}(\mathbb{R}^n)}+\sum_{|\nu|=k}
|\partial^{\nu}u|_{W^{\theta,p}(\mathbb{R}^n)}<\infty\}
\end{equation*}
\item If $s<0$ and $p\in(1,\infty)$,
\begin{equation*}
W^{s,p}(\mathbb{R}^n)=(W^{-s,p'}(\mathbb{R}^n))^{*} \quad
(\frac{1}{p}+\frac{1}{p'}=1).
\end{equation*}
\end{itemize}
\end{definition}
 Alternatively, for $s\in \mathbb{R}$ and $1<p<\infty$, one can
define Sobolev spaces as Bessel potential spaces
$H^{s,p}(\mathbb{R}^n)$:
\begin{equation*}
H^{s,p}(\mathbb{R}^n)= \{u\in S'(\mathbb{R}^n):
\|u\|_{W^{s,p}(\mathbb{R}^n)}:=\|\mathcal{F}^{-1}(\langle\xi\rangle^s\mathcal{F}u)\|_{L^p}<\infty\}\,,
\end{equation*}
where $\langle\xi\rangle:=(1+|\xi|^2)^{\frac{1}{2}}$. Here
$\mathcal{F}$ denotes the Fourier transform on the space
$S'(\mathbb{R}^n)$ of tempered distributions. It is a well-known
fact that $H^{s,p}(\mathbb{R}^n)=(H^{-s,p'}(\mathbb{R}^n))^{*}$
and for $k\in \mathbb{Z}$ the two definitions agree \cite{31,36,
StHo2011a}. Also for $s\in \mathbb{R}$ and $p=2$ the two
definitions
agree \cite{31,StHo2011a}. $H^{s,2}(\reals^n)$ is often denoted by $H^s(\reals^n)$.\\
\textbf{2-Definitions Based on Interpolation Theory}\\ A short
introduction to interpolation theory in Banach spaces is given in
Section \ref{subsec:importantproperties}. Suppose $s\in\reals
\setminus \mathbb{Z}$, $1<p<\infty$, and let
$\theta:=s-\floor{s}$.
\begin{itemize}
\item
$W^{s,p}(\mathbb{R}^n)=(W^{\floor{s},p}(\mathbb{R}^n),W^{\floor{s}+1,p}(\mathbb{R}^n))_{\theta,p}$.
\item  $H^{s,p}(\mathbb{R}^n)=[H^{\floor{s},p}(\mathbb{R}^n),H^{\floor{s}+1,p}(\mathbb{R}^n)]_{\theta}$.
\end{itemize}
\textbf{3-Definitions Based on Littlewood-Paley Theory}\\
 Consider an open cover of $\mathbb{R}^n$ that consists of the
following sets (annuli):
\begin{equation*}
B_2,\quad B_4\setminus \bar{B_1}, \quad B_8\setminus
\bar{B_2},\,...,\,B_{2^{j+1}}\setminus \bar{B}_{2^{j-1}},
\end{equation*}
where $B_r$ is the open ball of radius $r$ centered at the
origin. Consider the following smooth partition of unity subordinate to
the above cover of $\mathbb{R}^n$:
\begin{align*}
&\varphi_0=1\quad \textrm{on} \quad B_1,\quad {\rm supp} \varphi_0\subseteq B_2,\\
&\varphi(\xi)= \varphi_0(\xi)-\varphi_0(2\xi) \quad ({\rm supp}
\varphi \subseteq B_2, \quad \varphi=0 \, \textrm{on} \,
B_{\frac{1}{2}}),\\
& \forall j\geq 1 \quad \varphi_j(\xi)= \varphi(2^{-j}\xi).
\end{align*}
One can easily check that $\sum_{j=0}^{\infty}\varphi_j (\xi)=1$.
\begin{definition}
${}$
\begin{itemize}
\item For $s\in \mathbb{R}$, $1\leq p<\infty$, and $1 \leq q<\infty$ (or
$p=q=\infty$) we define the Triebel-Lizorkin space
$F^{s}_{p,q}(\mathbb{R}^n)$ as follows
\begin{equation*}
F^{s}_{p,q}(\mathbb{R}^n)=\{u\in S'(\mathbb{R}^n): \|
u\|_{F^{s}_{p,q}(\mathbb{R}^n)}= \big |\big| \parallel
2^{sj}\mathcal{F}^{-1}(\varphi_j \mathcal{F}u)\parallel_{l^q}
\big|\big|_{L^p(\mathbb{R}^n)}<\infty\}
\end{equation*}
\item For $s\in \mathbb{R}$, $1\leq p<\infty$, and $1 \leq
q<\infty$ we define the Besov space $B^s_{p,q}(\mathbb{R}^n)$ as
follows
\begin{equation*}
B^{s}_{p,q}(\mathbb{R}^n)=\{u\in S'(\mathbb{R}^n): \|
u\|_{B^{s}_{p,q}(\mathbb{R}^n)}= \big |\big| \parallel
2^{sj}\mathcal{F}^{-1}(\varphi_j
\mathcal{F}u)\parallel_{L^p(\mathbb{R}^n)}
\big|\big|_{l^q}<\infty\}
\end{equation*}
\end{itemize}
\end{definition}
 We have the following relations \cite{36,StHo2011a,38}
\begin{itemize}
\item $L^p=F^0_{p,2},\quad 1<p<\infty$.
\item $B^{s}_{p,p}=F^{s}_{p,p}, \quad s\in \mathbb{R},\,\, 1<p<\infty$.
\item $H^{s,p}=F^{s}_{p,2}, \quad s\in \mathbb{R},\,\, 1<p<\infty$.
\item $W^{k,p}=H^{k,p}=F^{k}_{p,2}, \quad k\in \mathbb{Z},\,\,
1<p<\infty$.
\item $W^{s,p}=B^s_{p,p}=F^s_{p,p},\quad s\in \mathbb{R}\setminus
\mathbb{Z},\,\, 1<p<\infty$.
\item If $k\in \mathbb{N}$, then $B^{k}_{p,p}\hookrightarrow W^{k,p}$ for $1\leq p \leq 2$
 and $W^{k,p}\hookrightarrow B^k_{p,p}$ for $p\geq 2$.
\end{itemize}
\begin{definition}
Let $\Omega$ be an open bounded subset of $\mathbb{R}^n$ with
Lipschitz continuous boundary. Suppose $s\geq 0$ and $1\leq
p<\infty$. $W^{s,p}(\Omega)$ is defined as the restriction of
$W^{s,p}(\mathbb{R}^n)$ to $\Omega$ and is equipped with the
following norm:
\begin{equation*}
\|u\|_{W^{s,p}(\Omega)}=\inf_{v\in W^{s,p}(\mathbb{R}^n),
v|_{\Omega}=u}\|v\|_{W^{s,p}(\mathbb{R}^n)}.
\end{equation*}
$W^{s,p}_0(\Omega)$ is defined as the closure of
$C_c^{\infty}(\Omega)$ in $W^{s,p}(\Omega)$. $W^{s,2}_0(\Omega)$
is often denoted by $\mathring{H}^s(\Omega)$.
\end{definition}
\begin{remark}
${}$
\begin{itemize}
\item One may define $H^{s,p}(\Omega)$, $B^s_{p,q}(\Omega)$, and
$F^s_{p,q}(\Omega)$ in a similar fashion.
\item It can be shown that for $k\in \mathbb{N}_0$ and $1<p<\infty$ the above
definition of $W^{k,p}(\Omega)$ agrees with the following
intrinsic definition \cite{36}
\begin{equation*}
W^{k,p}(\Omega)=\{u\in L^p (\Omega):
\|u\|_{W^{k,p}(\Omega)}:=\sum_{|\nu|\leq
k}\|\partial^{\nu}u\|_{L^p(\Omega)}<\infty\}.
\end{equation*}
Indeed, for all $s>0$ and $1<p<\infty$, the Sobolev space $W^{s,p}(\Omega)$ can be equivalently defined using intrinsic norms analogous to those displayed in Definition \ref{def8201}.
\end{itemize}
\end{remark}
For $s<0$ and $1<p<\infty$ we define
$W^{s,p}(\Omega):=(W^{-s,p'}_0(\Omega))^{*}$.

When there is no danger of ambiguity about the domain we may write
\begin{itemize}
\item $W^{s,p}$ instead of $W^{s,p}(\Omega)$,
\item $\| .\|_{W^{s,p}}$ or $\|\cdot\|_{s,p}$ instead of
$\|\cdot\|_{W^{s,p}(\Omega)}$.
\end{itemize}
\section{Key Properties of Sobolev Spaces}
 \label{subsec:importantproperties}
 We begin with reviewing the basic definitions of
 interpolation theory in Banach spaces. A detailed discussion can be found in \cite{36}.

 A pair $\{A_0, A_1\}$ of two Banach spaces is said to be an
\emph{interpolation couple}, if both spaces are continuously
embedded in a common Hausdorff topological vector space $A$.
 We may consider the following two subspaces:
\begin{itemize}
\item $A_0 \cap A_1$, and
\item $A_0+A_1:= \{a\in A : \exists a_0 \in A_0,\,\, \exists a_1\in A_1,\,\,
a=a_0+a_1\}$.
\end{itemize}
 Equipped with the norms
\begin{align*}
&\|a\|_{A_0 \cap A_1}:=\min\{\|a\|_{A_0}, \|a\|_{A_1}\}\\
&\|a\|_{A_0+A_1}:=\inf\{\|a_0\|_{A_0}+\|a_1\|_{A_1}:
a=a_0+a_1\}\quad 
\end{align*}
$A_0 \cap A_1$ and $A_0+A_1$ become Banach spaces. \emph{Real
interpolation} and \emph{complex interpolation} are two,
generally nonequivalent, methods for constructing intermediate
spaces between $A_0$ and $A_1$ in the sense that the new space
lies between $A_0\cap A_1$ and $A_0+A_1$ (with continuous
injections).
\begin{itemize}
\item Given a pair $(\theta,p)$ with $0< \theta <1$ and
$1<p<\infty$, the real interpolation functor constructs an
intermediate Banach space denoted by $(A_0, A_1)_{\theta,p}$.
\item Given $0<\theta<1$, the complex interpolation functor
constructs an intermediate Banach space denoted by $[A_0,
A_1]_{\theta}$.
\end{itemize}
\begin{theorem}[Real Interpolation]\lab{thm3.01}\cite{36}
 Let $\Omega$ be a bounded open set with smooth boundary in $\mathbb{R}^n$ or $\Omega=\mathbb{R}^n$. Suppose $\theta \in (0,1)$, $0\leq s_0,s_1<\infty$, and
 $1<p_0,p_1<\infty$. Additionally assume one of the following cases holds:
\begin{itemize}
\item $s_0, s_1, s$ are nonintegers.
\item  $s_0\in \mathbb{R}$, $s_1\in \mathbb{Z}$, and $s\in \mathbb{R}\setminus
\mathbb{Z}$.
\end{itemize}
 If
\begin{equation*}
s=(1-\theta)s_0+\theta s_1, \quad \quad
\frac{1}{p}=\frac{1-\theta}{p_0}+\frac{\theta}{p_1},
\end{equation*}
then $W^{s,p}(\Omega)=(W^{s_0,p_0}(\Omega),
W^{s_1,p_1}(\Omega))_{\theta,p}$.
\end{theorem}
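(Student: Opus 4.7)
The plan is a two-step reduction: first pass from the bounded domain $\Omega$ to $\reals^n$ via a universal extension operator, and then identify the interpolation space on $\reals^n$ using the Littlewood-Paley characterizations of $W^{s,p}(\reals^n)$ collected above. For the reduction, take a Stein-type (or, in the merely Lipschitz case, Rychkov) extension operator $E$ that is bounded $W^{s,p}(\Omega)\to W^{s,p}(\reals^n)$ for all pairs $(s,p)$ appearing in the statement, and let $R$ denote restriction to $\Omega$. Boundedness of $R$ between the endpoint pairs is immediate from the infimum definition of $\|\cdot\|_{W^{s,p}(\Omega)}$, while $RE=\operatorname{id}$. The retract/coretract lemma from interpolation theory then gives
\[
(W^{s_0,p_0}(\Omega), W^{s_1,p_1}(\Omega))_{\theta,p} \;=\; R\bigl((W^{s_0,p_0}(\reals^n), W^{s_1,p_1}(\reals^n))_{\theta,p}\bigr),
\]
with equivalent norms, so it suffices to treat $\Omega=\reals^n$.

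On $\reals^n$ the strategy is to identify every space in sight inside the Besov/Triebel-Lizorkin scale, using $W^{k,p}(\reals^n)=F^{k}_{p,2}(\reals^n)$ for $k\in\mathbb{Z}$ and $W^{s,p}(\reals^n)=B^{s}_{p,p}(\reals^n)=F^{s}_{p,p}(\reals^n)$ for $s\notin\mathbb{Z}$, all with $1<p<\infty$. In the first hypothesis (all of $s_0,s_1,s$ non-integer), the claim reduces to the real-interpolation identity
\[
\bigl(B^{s_0}_{p_0,p_0}(\reals^n),\, B^{s_1}_{p_1,p_1}(\reals^n)\bigr)_{\theta,p} \;=\; B^{s}_{p,p}(\reals^n),
\]
obtained by dyadic localization (a retract/coretract pair onto weighted sequence-valued $\ell^{p}_{s}(L^{p})$ spaces) followed by real interpolation of vector-valued $L^{p}$. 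In the mixed case $s_1\in\mathbb{Z}$ with $s\notin\mathbb{Z}$, one endpoint is $F^{s_1}_{p_1,2}$ while the other is $F^{s_0}_{p_0,q_0}$ with $q_0\in\{p_0,2\}$, and the identity I would invoke is
\[
\bigl(F^{s_0}_{p_0,q_0}(\reals^n),\, F^{s_1}_{p_1,q_1}(\reals^n)\bigr)_{\theta,p} \;=\; B^{s}_{p,p}(\reals^n),
\]
valid whenever $s_0\neq s_1$ under the stated convex-combination relations; this produces exactly $W^{s,p}(\reals^n)$ since $s\notin\mathbb{Z}$.

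The main obstacle is the last Triebel-Lizorkin interpolation identity: unlike the pure-Besov situation, one cannot simply transpose to a vector-valued $L^{p}$ problem because the outer $L^p$ norm precedes the $\ell^{q}$ norm, so one must show that the real method washes out the fine indices $q_0,q_1$ as soon as the smoothness parameters differ. This is the step I would cite from \cite{36} rather than reprove from scratch. A secondary concern is arranging a single extension operator $E$ that is uniformly bounded across the full range of $(s,p)$ relevant here; Rychkov's universal extension supplies this for Lipschitz $\partial\Omega$, and I would invoke it as a black box so that the entire reduction to $\reals^n$ is purely abstract interpolation.
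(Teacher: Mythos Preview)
The paper does not supply its own proof of this theorem: it is stated as a known result attributed to \cite{36} (Triebel), and the authors use it as a black box throughout. There is therefore nothing in the paper to compare your proposal against directly.

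That said, your outline is essentially the standard argument one finds in \cite{36}: retract to $\reals^n$ via a universal extension operator, identify the endpoint spaces within the $B^{s}_{p,q}/F^{s}_{p,q}$ scale, and invoke the real-interpolation identities for those scales. Your handling of the two bullet cases is correct, including the observation that in the mixed case $s_1\in\mathbb{Z}$, $s\notin\mathbb{Z}$ forces $s_0\neq s_1$, which is exactly the hypothesis under which the Triebel--Lizorkin real-interpolation result $(F^{s_0}_{p_0,q_0},F^{s_1}_{p_1,q_1})_{\theta,p}=B^{s}_{p,p}$ is known to hold independently of $q_0,q_1$. You are right to flag that last identity as the nontrivial step to be cited rather than reproved; the point that the real method ``forgets'' the fine index $q$ once the smoothness parameters differ is precisely Theorem~2.4.2 in Triebel's book, and it does require a genuine argument beyond the retraction-to-$\ell^{q}(L^{p})$ trick that suffices for the pure Besov case.
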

\begin{theorem}[Complex Interpolation]\lab{thm3.02}\cite{36}
 Let $\Omega$ be a bounded open set with smooth boundary in $\mathbb{R}^n$ or $\Omega=\mathbb{R}^n$. Suppose $\theta \in (0,1)$,
 $0\leq s_0,s_1<\infty$, and
 $1<p_0,p_1<\infty$. If
\begin{equation*}
s=(1-\theta)s_0+\theta s_1, \quad \quad
\frac{1}{p}=\frac{1-\theta}{p_0}+\frac{\theta}{p_1},
\end{equation*}
then
\begin{itemize}
\item $H^{s,p}(\Omega)=[H^{s_0,p_0}(\Omega),
H^{s_1,p_1}(\Omega)]_{\theta}$.
\item $W^{s,p}(\Omega)=[W^{s_0,p_0}(\Omega),
W^{s_1,p_1}(\Omega)]_{\theta}$ provided $s_0, s_1, s>0$ are
nonintegers.
\item $W^{s,p}(\Omega)\hookrightarrow [W^{s_0,p_0}(\Omega),
W^{s_1,p_1}(\Omega)]_{\theta}$ provided $s_0$ and $s_1$ are not
integers and $p\geq 2$.\\
 (This is a consequence of the fact that
for $s_0,s_1\not \in \mathbb{Z}$, $[W^{s_0,p_0}(\Omega),
W^{s_1,p_1}(\Omega)]_{\theta}=B^{s}_{p,p}$. If $s\not \in
\mathbb{Z}$, then $B^{s}_{p,p}=W^{s,p}$; if $s\in \mathbb{Z}$,
then $W^{s,p}\hookrightarrow B^s_{p,p}$ provided $p\geq 2$.)
\end{itemize}
\end{theorem}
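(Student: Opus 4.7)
The plan is to reduce each of the three assertions to classical complex interpolation formulas for the Triebel-Lizorkin and Besov scales on $\mathbb{R}^n$, and then transport the resulting identities to a bounded Lipschitz domain $\Omega$ by a universal extension/restriction argument. The central black-box input that I would take from \cite{36} is
\begin{equation*}
\bigl[F^{s_0}_{p_0,q_0}(\mathbb{R}^n),\, F^{s_1}_{p_1,q_1}(\mathbb{R}^n)\bigr]_\theta = F^{s}_{p,q}(\mathbb{R}^n),
\end{equation*}
valid whenever $s=(1-\theta)s_0+\theta s_1$, $\tfrac{1}{p}=\tfrac{1-\theta}{p_0}+\tfrac{\theta}{p_1}$, and $\tfrac{1}{q}=\tfrac{1-\theta}{q_0}+\tfrac{\theta}{q_1}$, together with the Littlewood-Paley identifications collected in Section \ref{app:spacesbasicdef}.

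The first bullet then follows on $\mathbb{R}^n$ by setting $q_0=q_1=2$ (hence $q=2$) and recalling $H^{s,p}(\mathbb{R}^n)=F^{s}_{p,2}(\mathbb{R}^n)$. For the second bullet, the hypothesis that $s_0,s_1,s$ are nonintegers gives $W^{s_i,p_i}(\mathbb{R}^n)=F^{s_i}_{p_i,p_i}(\mathbb{R}^n)$, and the same formula with $q_i=p_i$ (hence $q=p$) identifies the interpolated space with $F^{s}_{p,p}(\mathbb{R}^n)=W^{s,p}(\mathbb{R}^n)$. To pass to $\Omega$, I would invoke a simultaneous (universal) extension operator $E\colon W^{s,p}(\Omega)\to W^{s,p}(\mathbb{R}^n)$, and its analogue for $H^{s,p}$, whose existence on Lipschitz domains is the standard Rychkov/Stein-type result. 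Paired with the restriction $R\colon W^{s,p}(\mathbb{R}^n)\to W^{s,p}(\Omega)$, the pair $(R,E)$ forms a retraction/coretraction, and the abstract principle that complex interpolation commutes with retractions then transfers the $\mathbb{R}^n$ identities verbatim to $\Omega$.

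The third bullet is where the main subtlety lies, and is the step I expect to require the most care. The hypotheses $s_0,s_1\notin\mathbb{Z}$ again allow one to identify $[W^{s_0,p_0}(\Omega),W^{s_1,p_1}(\Omega)]_\theta=F^{s}_{p,p}(\Omega)=B^{s}_{p,p}(\Omega)$ by the argument above. However, $s$ may now equal an integer, in which case $B^{s}_{p,p}$ and $W^{s,p}$ no longer coincide: from $W^{s,p}=F^{s}_{p,2}$ and $B^{s}_{p,p}=F^{s}_{p,p}$ the only available relation is the embedding $F^{s}_{p,2}\hookrightarrow F^{s}_{p,p}$ for $p\geq 2$, a fiberwise consequence of $\ell^2\hookrightarrow\ell^p$ in the Littlewood-Paley norm. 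Composing this embedding with the interpolation identity yields exactly the claimed one-sided inclusion $W^{s,p}(\Omega)\hookrightarrow [W^{s_0,p_0}(\Omega),W^{s_1,p_1}(\Omega)]_\theta$. Technically the hardest ingredient overall is the simultaneous extension on Lipschitz $\Omega$, which I would cite rather than construct from scratch; everything else is a bookkeeping exercise in Littlewood-Paley indices.
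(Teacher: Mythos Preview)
The paper does not actually supply its own proof of Theorem~\ref{thm3.02}; it is stated as a known result cited from \cite{36}. The only argumentative content the paper provides is the parenthetical remark inside the third bullet, explaining that $[W^{s_0,p_0},W^{s_1,p_1}]_\theta=B^{s}_{p,p}$ when $s_0,s_1\notin\mathbb{Z}$, and that $W^{s,p}\hookrightarrow B^{s}_{p,p}$ for integer $s$ and $p\geq 2$. Your proposal reproduces exactly this reasoning for the third bullet (identifying $B^{s}_{p,p}=F^{s}_{p,p}$ and invoking $F^{s}_{p,2}\hookrightarrow F^{s}_{p,p}$ from $\ell^2\hookrightarrow\ell^p$), so on that point you are in complete agreement with the paper.

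For the first two bullets your sketch goes further than the paper does: you outline the standard derivation from the Triebel--Lizorkin interpolation formula together with the identifications $H^{s,p}=F^{s}_{p,2}$ and $W^{s,p}=F^{s}_{p,p}$ (the latter for noninteger $s$), plus a retraction/coretraction argument to pass to bounded $\Omega$. This is correct and is indeed how these facts are established in \cite{36}. There is nothing to compare against here, since the paper simply quotes the result; your proposal is a faithful reconstruction of the underlying argument rather than an alternative route.
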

\begin{remark}
According to  \cite{36}, the above interpolation facts remain true
even if we only assume the bounded open set $\Omega$ is of
cone-type. According to  \cite{32} if $\Omega$ is a bounded open
set with Lipschitz continuous boundary, then it is of cone-type.
\end{remark}
\begin{theorem}[Properties of the Spaces $(A_0, A_1)_{\theta,p}$]\lab{thm8201}\cite{36}
\begin{itemize}
\item It holds that $(A_0, A_1)_{\theta,p}=(A_1, A_0)_{1-\theta,p}$.
\item If $A_0\hookrightarrow A_1$, then for $0<\theta<\tilde{\theta}<1$ and $1<p, \tilde{p}<\infty$
\begin{equation*}
(A_0, A_1)_{\theta,p}\hookrightarrow (A_0, A_1)_{\tilde{\theta},\tilde{p}}
\end{equation*}
\end{itemize}
\end{theorem}
\begin{theorem}[Interpolation Properties of Linear Operators]
\lab{thm3.03-B}\cite{37}
 Let $A_0\subseteq A_1$ and $B_0\subseteq B_1$ be couples of Banach spaces. If $T_1:A_1\rightarrow B_1$ is a continuous linear map that restricts to a continuous linear map $T_0: A_0\rightarrow B_0$, then $T_1$ also restricts to a continuous linear map from $(A_0, A_1)_{\theta,p}$ to $(B_0, B_1)_{\theta,p}$ for all $0<\theta<1$ and $1<p<\infty$.
\end{theorem}
\newpage
\begin{theorem}[Interpolation Properties of Bilinear Forms]
\lab{thm3.03}\cite{36}
Let $A_0 \subseteq A_1$, $B_0
\subseteq B_1$, and $C_0 \subseteq C_1$ be couples of Banach
spaces. If $T_1: A_1 \times B_1 \rightarrow C_1$ is a continuous
bilinear map that restricts to a continuous bilinear map $T_0:
A_0 \times B_0 \rightarrow C_0$, then $T_1$ also restricts to a
continuous bilinear map
\begin{itemize}
\item (complex interpolation) from $[A_0, A_1]_{\theta}\times [B_0, B_1]_{\theta}$ into $[C_0,
C_1]_{\theta}$, and
\item (real interpolation)  from $(A_0, A_1)_{\theta,p}\times (B_0,
B_1)_{\theta,q}$ into $(C_0, C_1)_{\theta,r}$
\end{itemize}
where $0<\theta<1$ and $\frac{1}{r}=\frac{1}{p}+\frac{1}{q}-1\geq
0$.
\end{theorem}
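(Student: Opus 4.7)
The plan is to treat the two conclusions independently, using the respective defining frameworks for complex and real interpolation recalled in Section~\ref{subsec:importantproperties}. The common thread is to package the two endpoint continuities of $T_1$ into a single bilinear continuity statement on the intermediate space.

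For the complex interpolation case, the natural approach is via the holomorphic $\mathcal{F}$-space definition of $[A_0,A_1]_\theta$. Given $a\in[A_0,A_1]_\theta$ and $b\in[B_0,B_1]_\theta$, I would select extensions $f\in\mathcal{F}(A_0,A_1)$ and $g\in\mathcal{F}(B_0,B_1)$ with $f(\theta)=a$, $g(\theta)=b$, and $\mathcal{F}$-norms arbitrarily close to $\|a\|_{[A_0,A_1]_\theta}$ and $\|b\|_{[B_0,B_1]_\theta}$. Set $h(z):=T_1(f(z),g(z))$ on the closed strip $\{0\le\mathrm{Re}\,z\le 1\}$. The verification that $h\in\mathcal{F}(C_0,C_1)$ splits into three short items: continuity and boundedness on the strip (from joint continuity of the bilinear map together with boundedness of $f$ and $g$); holomorphy in the open strip (since a continuous bilinear form of two vector-valued holomorphic functions is holomorphic); and the two boundary bounds $\|h(iy)\|_{C_0}\le M_0\|f\|_{\mathcal{F}}\|g\|_{\mathcal{F}}$ and $\|h(1+iy)\|_{C_1}\le M_1\|f\|_{\mathcal{F}}\|g\|_{\mathcal{F}}$, which come directly from the endpoint continuities. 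Evaluating at $z=\theta$ gives $T_1(a,b)=h(\theta)\in[C_0,C_1]_\theta$ together with the desired norm bound $\|T_1(a,b)\|_{[C_0,C_1]_\theta}\le M_0^{1-\theta}M_1^{\theta}\|a\|_{[A_0,A_1]_\theta}\|b\|_{[B_0,B_1]_\theta}$.

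For the real interpolation case, the plan is to reduce everything to a sub-multiplicative $K$-functional estimate of the form
\[
K(st,\,T_1(a,b);\,C_0,C_1)\;\preceq\;K(s,a;\,A_0,A_1)\cdot K(t,b;\,B_0,B_1),\qquad s,t>0.
\]
This estimate would be proved by choosing near-optimal decompositions $a=a_0+a_1$ (at parameter $s$) and $b=b_0+b_1$ (at parameter $t$) and expanding
\[
T_1(a,b)=T_0(a_0,b_0)+\bigl[T_1(a_0,b_1)+T_1(a_1,b_0)+T_1(a_1,b_1)\bigr];
\]
the first summand is placed in $C_0$ via continuity of $T_0$, while the bracket is placed in $C_1$ via continuity of $T_1$ together with the continuous embeddings $A_0\hookrightarrow A_1$ and $B_0\hookrightarrow B_1$ implicit in the nested-couple hypothesis. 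With the sub-multiplicativity in hand, the target bilinear bound follows by writing the $(C_0,C_1)_{\theta,r}$ norm as a weighted $L^r(d\tau/\tau)$ integral of $\tau^{-\theta}K(\tau,T_1(a,b))$, making a change of variable $\tau=st$ (or a suitable rescaling), and applying H\"older's inequality in the form $\tfrac{1}{r}=\tfrac{1}{p}+\tfrac{1}{q}-1$ to produce the continuous bilinear map $(A_0,A_1)_{\theta,p}\times(B_0,B_1)_{\theta,q}\to(C_0,C_1)_{\theta,r}$.

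I expect the chief obstacle to lie in the real-interpolation case, and specifically in the clean handling of the mixed cross-terms $T_1(a_0,b_1)$ and $T_1(a_1,b_0)$ appearing in the $K$-functional decomposition: these mix endpoints and must be absorbed into the $C_1$ component via the nested-couple embeddings, which brings in embedding constants that need to be tracked carefully so as to preserve the product form of the upper bound. The subsequent H\"older computation yielding the exponent relation $\tfrac{1}{r}=\tfrac{1}{p}+\tfrac{1}{q}-1$ together with the admissibility condition $1/r\ge 0$ is standard but bookkeeping-heavy. The complex case, by contrast, is essentially a direct application of the definition once one verifies that composing with $T_1$ respects the $\mathcal{F}$-space structure.
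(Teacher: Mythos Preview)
The paper does not prove this theorem at all; it is quoted from Triebel~\cite{36} as a background tool, so there is no ``paper's own proof'' to compare against. I therefore assess your proposal on its own merits.

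Your complex-interpolation argument is the standard one and is fine.

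Your real-interpolation argument, however, has a genuine gap: the pointwise sub-multiplicative estimate
\[
K\bigl(st,\,T_1(a,b);C_0,C_1\bigr)\ \preceq\ K(s,a;A_0,A_1)\,K(t,b;B_0,B_1)
\]
that you set as the target is \emph{false} in general, and your own decomposition shows why. With $a=a_0+a_1$ near-optimal at parameter $s$ and $b=b_0+b_1$ near-optimal at parameter $t$, the cross term $T_1(a_0,b_1)$ can only be placed in $C_1$ (since $b_1\notin B_0$), contributing to $K(st,T_1(a,b))$ a quantity comparable to
\[
st\,\|T_1(a_0,b_1)\|_{C_1}\ \preceq\ st\,\|a_0\|_{A_0}\,\|b_1\|_{B_1},
\]
whereas the product $K(s,a)K(t,b)$ only offers the term $t\,\|a_0\|_{A_0}\,\|b_1\|_{B_1}$. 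The discrepancy is a factor of $s$, which is unbounded. (The other cross term fails symmetrically for large $t$.) So no uniform constant can exist, and the subsequent ``H\"older'' step never gets off the ground.

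The standard route is through the $J$-method rather than the $K$-method. The algebraic reason is that $J(t,c)=\max(\|c\|_{C_0},\,t\|c\|_{C_1})$ is a maximum, and for nonnegative reals one has $\max(ac,bd)\le\max(a,b)\max(c,d)$; hence for $u\in A_0\cap A_1$, $v\in B_0\cap B_1$ one genuinely obtains
\[
J\bigl(st,\,T_1(u,v);C_0,C_1\bigr)\ \le\ \max(M_0,M_1)\,J(s,u;A_0,A_1)\,J(t,v;B_0,B_1).
\]
Representing $a=\int_0^\infty u(s)\,\frac{ds}{s}$ and $b=\int_0^\infty v(t)\,\frac{dt}{t}$ by near-optimal $J$-decompositions, setting $\tau=st$ and $w(\tau)=\int_0^\infty T_1\bigl(u(s),v(\tau/s)\bigr)\frac{ds}{s}$, one finds that $\tau^{-\theta}J(\tau,w(\tau))$ is dominated by a multiplicative convolution of $s^{-\theta}J(s,u(s))$ and $t^{-\theta}J(t,v(t))$. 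The exponent relation $\frac{1}{r}=\frac{1}{p}+\frac{1}{q}-1$ then arises from \emph{Young's} convolution inequality on $(0,\infty)$ with Haar measure $ds/s$, not from H\"older's inequality as you wrote; this is exactly why the constraint reads $\frac{1}{r}+1=\frac{1}{p}+\frac{1}{q}$ rather than $\frac{1}{r}=\frac{1}{p}+\frac{1}{q}$.
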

\begin{theorem}[Extension Property]\lab{thm3.1}\cite{33}
Let $\Omega \subset \mathbb{R}^n$ be a bounded open set with
Lipschitz continuous boundary. Then for all $s>0$ and for $1\leq
p < \infty$, there exists a continuous linear extension operator
$P: W^{s,p}(\Omega)\hookrightarrow W^{s,p}(\mathbb{R}^n)$ such
that $(Pu)|_{\Omega}=u$ and \newline $\| P
u\|_{W^{s,p}(\mathbb{R}^n)}\leq C\| u
\|_{W^{s,p}(\Omega)}$ for some constant $C$ that may
depend on $s$, $p$, and $\Omega$ but is independent of $u$.
\end{theorem}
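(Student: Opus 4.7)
The plan is to reduce to the half-space via a partition-of-unity-and-straightening argument, construct a \emph{single} extension operator (Stein's averaging operator) that is bounded on $W^{k,p}(\mathbb{R}^n_+)$ for every integer $k\geq 0$ simultaneously, and then obtain the fractional range by real interpolation between the integer endpoints. Keeping a universal operator at the integer step is essential; otherwise one cannot transfer the extension to noninteger $s$ via Theorem \ref{thm3.01}.

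First, I would choose a finite open cover $U_0,U_1,\ldots,U_N$ of $\overline{\Omega}$ with $\overline{U_0}\subset\Omega$ and, for each $i\geq 1$, a bi-Lipschitz map $\Phi_i: U_i \to V_i$ that flattens $\partial\Omega\cap U_i$ and sends $\Omega\cap U_i$ to $V_i\cap\mathbb{R}^n_+$; such maps exist because $\partial\Omega$ is Lipschitz. Given a smooth partition of unity $\{\zeta_i\}$ subordinate to this cover, the interior piece $\zeta_0 u$ extends by zero to all of $\mathbb{R}^n$, while each boundary piece $\zeta_i u$ transported by $\Phi_i$ becomes a function on $\mathbb{R}^n_+$ with compact support, since integer-order Sobolev norms are preserved (up to constants) under bi-Lipschitz changes of variables. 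This isolates the entire difficulty in the single problem of extending a function from $\mathbb{R}^n_+$ to $\mathbb{R}^n$.

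Second, for the half-space step I would employ Stein's averaging operator
\begin{equation*}
(Ev)(x',x_n)=\int_{1}^{\infty} v(x',-\lambda x_n)\,\psi(\lambda)\,d\lambda\quad\text{for } x_n<0,
\end{equation*}
together with $(Ev)(x',x_n)=v(x',x_n)$ for $x_n\geq 0$, where $\psi:[1,\infty)\to\mathbb{R}$ decays rapidly and satisfies the moment conditions
\begin{equation*}
\int_1^\infty \psi(\lambda)\,d\lambda=1,\qquad \int_1^\infty \lambda^k\psi(\lambda)\,d\lambda=0 \quad\text{for every } k\geq 1.
\end{equation*}
Differentiating under the integral sign and using the moment conditions to match traces of derivatives across $\{x_n=0\}$ shows that $E: W^{k,p}(\mathbb{R}^n_+)\to W^{k,p}(\mathbb{R}^n)$ is continuous for every $k\in\mathbb{N}_0$ and $1\leq p<\infty$, with $(Ev)|_{\mathbb{R}^n_+}=v$. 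Composing with the charts $\Phi_i$, multiplying by cutoffs, and summing produces a single linear map $P$ that is continuous $W^{k,p}(\Omega)\to W^{k,p}(\mathbb{R}^n)$ for \emph{every} nonnegative integer $k$, with $(Pu)|_\Omega=u$.

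Third, for noninteger $s>0$ (and $1<p<\infty$) I would apply Theorem \ref{thm3.01} with endpoints $k_0=\floor{s}$, $k_1=\floor{s}+1$ and parameter $\theta=s-\floor{s}\in(0,1)$, and the same $p$ at both ends: real interpolation of the two endpoint bounds for $P$ yields continuity of $P$ from $(W^{k_0,p}(\Omega),W^{k_1,p}(\Omega))_{\theta,p}=W^{s,p}(\Omega)$ into $(W^{k_0,p}(\mathbb{R}^n),W^{k_1,p}(\mathbb{R}^n))_{\theta,p}=W^{s,p}(\mathbb{R}^n)$. The restriction identity passes to the intermediate space at once since it holds at both endpoints. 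The case $p=1$ can be handled directly using the Slobodeckij seminorm and the fact that $E$ already satisfies the integer estimates at $p=1$, together with a straightforward double-integral bound on the Gagliardo seminorm via a change of variables. The principal obstacle is the \emph{universality} requirement: a reflection tailored to a fixed order $k$ produces a different operator for each $k$ and is incompatible with interpolation; it is precisely Stein's integral kernel with its infinite family of vanishing moments that supplies one operator that works simultaneously for every integer order on a merely Lipschitz domain.
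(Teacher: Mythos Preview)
The paper does not give its own proof of Theorem~\ref{thm3.1}; the result is simply quoted from the cited reference~\cite{33} and used as a black box throughout. So there is no ``paper's proof'' to compare against.

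That said, your sketch is the standard Stein construction and is essentially sound. One caution: in your third step you invoke Theorem~\ref{thm3.01} on the domain $\Omega$ to identify $(W^{k_0,p}(\Omega),W^{k_1,p}(\Omega))_{\theta,p}$ with $W^{s,p}(\Omega)$. In many presentations that interpolation identity on a bounded domain is itself \emph{derived} from the existence of a universal extension operator via a retract argument, so citing it here risks circularity. The clean way around this is to observe that once you have built a single $P$ bounded from $W^{k,p}(\Omega)$ to $W^{k,p}(\mathbb{R}^n)$ for all integers $k\ge 0$, the pair $(\text{restriction},P)$ makes $W^{k,p}(\Omega)$ a retract of $W^{k,p}(\mathbb{R}^n)$ simultaneously for $k=k_0,k_1$; the interpolation identity on $\Omega$ then follows from the one on $\mathbb{R}^n$ (which is unproblematic) by the abstract retract theorem for interpolation functors. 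With that adjustment the argument is self-contained. A second minor point: your chart maps $\Phi_i$ are only bi-Lipschitz, so they preserve integer-order Sobolev regularity but not, in general, fractional $W^{s,p}$ regularity directly; your strategy of interpolating only \emph{after} assembling the global operator $P$ correctly sidesteps this, but it is worth saying explicitly that the interpolation must be performed on the composite operator and not chart-by-chart.
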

\begin{theorem}[Embedding Theorem I]\lab{thm3.2}\cite{33,36,jZ77} Let $\Omega$ be a
bounded open subset of $\mathbb{R}^n$ with Lipschitz continuous
boundary or $\Omega=\mathbb{R}^n$. Suppose $1\leq p\leq q<\infty$
 and $0\leq t\leq s$
satisfy $s-\frac{n}{p}\geq t-\frac{n}{q}$.Then
\begin{itemize}
\item $W^{s,p}(\Omega)\hookrightarrow W^{t,q}(\Omega)$,
\item $H^{s,p}(\mathbb{R}^n)\hookrightarrow
H^{t,q}(\mathbb{R}^n)$ provided we assume $p>1$. 
\end{itemize}
\end{theorem}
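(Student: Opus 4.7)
The plan is to reduce the claim to the integer-exponent case on $\mathbb{R}^n$, which is the classical Gagliardo--Nirenberg--Sobolev inequality, and then to recover fractional exponents and bounded domains via the interpolation and extension tools already in hand.

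For reduction to $\mathbb{R}^n$: on a bounded Lipschitz $\Omega$, apply the extension operator $P$ of Theorem~\ref{thm3.1}. Assuming the embedding on $\mathbb{R}^n$, the restriction definition of $W^{t,q}(\Omega)$ gives $\|u\|_{W^{t,q}(\Omega)}\le\|Pu\|_{W^{t,q}(\mathbb{R}^n)}\le C\|Pu\|_{W^{s,p}(\mathbb{R}^n)}\le C'\|u\|_{W^{s,p}(\Omega)}$. The integer case on $\mathbb{R}^n$, with $s=k$, $t=m\in\mathbb{N}_0$, $k\ge m$, $k-n/p\ge m-n/q$, reduces via $\partial^\nu u\in W^{k-|\nu|,p}(\mathbb{R}^n)$ for $|\nu|\le m$ to the single inclusion $W^{k-m,p}(\mathbb{R}^n)\hookrightarrow L^q(\mathbb{R}^n)$, which is the iterated Gagliardo--Nirenberg--Sobolev inequality, supplemented by standard bootstrapping in the critical and supercritical regimes (including the case $(k-m)p>n$, where $W^{k-m,p}\hookrightarrow L^\infty$ and $L^q$ follows by interpolation with $L^p$).

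For fractional exponents on $\mathbb{R}^n$, I would interpolate: choose endpoint pairs $(s_i, p_i)$ and $(t_i, q_i)$, $i=0,1$, with $s=(1-\theta)s_0+\theta s_1$, $t=(1-\theta)t_0+\theta t_1$, and with $1/p, 1/q$ the corresponding convex combinations of $1/p_i, 1/q_i$, such that each endpoint satisfies the integer-case hypothesis. For the Bessel spaces (where $p>1$) this is cleanest via complex interpolation: the first bullet of Theorem~\ref{thm3.02} gives $H^{s,p}=[H^{s_0,p_0},H^{s_1,p_1}]_\theta$ with no integer restriction, and the interpolation-of-operators principle (the linear case of Theorem~\ref{thm3.03}) transports the two endpoint embeddings to the target. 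For Sobolev-Slobodeckij, the same scheme works through the second bullet of Theorem~\ref{thm3.02} whenever all of $s_0, s_1, s, t_0, t_1, t$ can be kept non-integer; otherwise one falls back on real interpolation via Theorem~\ref{thm3.01}.

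The main obstacle is orchestrating the endpoint choice so that both scaling inequalities $s_i-n/p_i\ge t_i-n/q_i$ hold while $(s,p)$ and $(t,q)$ sit at a common interpolation parameter $\theta$. In the strict case $s-n/p>t-n/q$, small perturbations of the four indices suffice by continuity; in the sharp case of equality, the endpoints must lie on a common Sobolev line $s'-n/p'=s-n/p$, which rigidifies $p_i$ given $s_i$, and one must verify that the required integer-case embeddings at the endpoints fall within the range covered by the previous step. A secondary issue arises if one is forced into real rather than complex interpolation, since the third index in $(\cdot,\cdot)_{\theta,r}$ must simultaneously match $p$ on the source and $q$ on the target; this can be sidestepped by collapsing one side (e.g., choosing $q_0=q_1=q$ and $t_0=t_1=t$) or by a two-step interpolation through an intermediate space.
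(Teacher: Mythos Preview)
The paper does not actually prove Theorem~\ref{thm3.2}: it is stated with citations to \cite{33,36,jZ77} and then used as a black-box tool throughout the remainder of the article (e.g., inside the proofs of Theorem~\ref{thm3.4}, Theorem~\ref{thm4.0}, and Theorem~\ref{thm4.6}). So there is no ``paper's own proof'' to compare against.

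Your sketch is a reasonable standard route, but a few of the steps are looser than they should be. First, the reduction of the integer case to $W^{k-m,p}(\mathbb{R}^n)\hookrightarrow L^q(\mathbb{R}^n)$ is fine when $(k-m)p<n$ or $(k-m)p>n$, but the critical case $(k-m)p=n$ does \emph{not} give $L^\infty$, only $L^r$ for all finite $r\ge p$; you state this correctly in passing, but your supercritical remark (``$L^q$ follows by interpolation with $L^p$'') tacitly uses that $W^{k-m,p}\subset L^p$, which is of course true, so this part is fine on $\mathbb{R}^n$. Second, and more substantively, the interpolation bookkeeping for the $W^{s,p}$ scale is where the real work lies. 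The restrictions in Theorems~\ref{thm3.01}--\ref{thm3.02} (integer vs.\ non-integer endpoints, and the third index $r$ in real interpolation) mean that your proposed ``fall back on real interpolation'' is not automatic: Theorem~\ref{thm3.01} as stated also requires that $s$ (or $t$) be a non-integer, so the cases $s\in\mathbb{Z}$ or $t\in\mathbb{Z}$ with non-integer endpoints are not covered by either bullet. The usual workaround is to interpolate \emph{one} exponent at a time (e.g., first move $s$ to the desired value with $t$ fixed at an integer, then move $t$), or to use the Besov identification $W^{s,p}=B^s_{p,p}$ for non-integer $s$ together with the known embeddings $B^s_{p,p}\hookrightarrow W^{k,p}\hookrightarrow B^s_{p,p}$ in the appropriate direction depending on whether $p\le 2$ or $p\ge 2$. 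You allude to such a two-step interpolation at the end, which is the right instinct, but the casework is not spelled out and is exactly the place where a careless argument would leave a gap.
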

\begin{theorem}[Embedding Theorem II]\cite{Gris85}\lab{thm3.3}
Let $\Omega$ be a bounded open subset of $\mathbb{R}^n$ with
Lipschitz continuous boundary or $\Omega=\mathbb{R}^n$.
\begin{enumerate}[(i)]
\item If $sp>n$, then $W^{s,p}(\Omega)\hookrightarrow
L^{\infty}(\Omega)\cap C^{0}(\Omega)$ and $W^{s,p}(\Omega)$ is a
Banach algebra.
\item If $sp=n$, then $W^{s,p}(\Omega)\hookrightarrow L^q(\Omega)$
for $p\leq q<\infty$.
\item If $0\leq sp<n$, then $W^{s,p}(\Omega)\hookrightarrow
L^{q}(\Omega)$ for $p\leq q \leq p^{*}=\frac{np}{n-sp}$.
\end{enumerate}
\end{theorem}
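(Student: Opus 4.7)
The plan is to reduce the bounded-domain assertion to the whole-space one via extension, handle integer exponents with classical Sobolev inequalities, and cover the noninteger case by interpolation. For the reduction, given a Lipschitz $\Omega$ and $u \in W^{s,p}(\Omega)$, Theorem~\ref{thm3.1} produces $Pu \in W^{s,p}(\mathbb{R}^{n})$ with $\|Pu\|_{W^{s,p}(\mathbb{R}^{n})} \preceq \|u\|_{W^{s,p}(\Omega)}$; any embedding proven on $\mathbb{R}^{n}$ then transfers to $\Omega$ by restriction, and the $C^{0}(\Omega)$ statement in (i) is inherited from the $C^{0}(\mathbb{R}^{n})$ version. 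For integer $s = k$, parts (ii) and (iii) follow from iterating the Gagliardo-Nirenberg-Sobolev inequality $\|w\|_{L^{p^{*}}} \preceq \|\nabla w\|_{L^{p}}$ applied to $\partial^{\alpha}u$ with $|\alpha| \leq k$, combined with H\"{o}lder interpolation between $L^{p}$ and $L^{p^{*}}$ to cover the full range $p \leq q \leq p^{*}$; the borderline case $sp = n$ follows from (iii) applied at a slightly smaller $s' < s$, whose critical exponent $np/(n - s'p)$ grows without bound as $s' \uparrow s$. Part (i) at integer $s$ is Morrey's inequality, which delivers the $L^{\infty}$ bound and continuity simultaneously.

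For noninteger $s = k + \theta$ with $\theta \in (0,1)$, I would apply Theorem~\ref{thm3.01} and write
\begin{equation*}
W^{s,p}(\mathbb{R}^{n}) = \bigl(W^{k,p}(\mathbb{R}^{n}),\, W^{k+1,p}(\mathbb{R}^{n})\bigr)_{\theta,\, p}.
\end{equation*}
If the two integer endpoints both embed continuously into a common target space $Y$, the interpolation property of linear operators immediately delivers an embedding $W^{s,p}(\mathbb{R}^{n}) \hookrightarrow Y$; for parts (i) and (ii) suitable integer endpoints can be arranged with target $L^{\infty}$ and $L^{q}$ respectively. The main obstacle is part (iii): the real interpolation of distinct $L^{q_{i}}$ targets is the Lorentz space $L^{q,p}$, not $L^{q}$. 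To resolve this I would either pick integer endpoints so that their classical embeddings target the \emph{same} $L^{q}$ (possible because each integer endpoint embeds into a range of $L^{q}$ by (iii) together with H\"{o}lder interpolation), invoke the inclusion $L^{q,p} \hookrightarrow L^{q}$ valid for $p \leq q$, or switch to complex interpolation (Theorem~\ref{thm3.02}), which produces $(L^{q_{0}}, L^{q_{1}})_{\theta} = L^{q}$ exactly.

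The Banach algebra claim in (i) requires that pointwise multiplication is bounded $W^{s,p} \times W^{s,p} \to W^{s,p}$. For integer $s = k$, Leibniz expands $\partial^{\alpha}(uv)$ into sums of products of derivatives of $u$ and $v$; each summand is controlled in $L^{p}$ by placing one factor in $L^{\infty}$ via $W^{k,p} \hookrightarrow L^{\infty}$ (from (i) at integer exponent) and the other in $L^{p}$. For noninteger $s = k + \theta$, after applying Leibniz to the $W^{k,p}$ part of the norm of $uv$, each Slobodeckij seminorm $|\partial^{\alpha}(uv)|_{W^{\theta,p}}$ with $|\alpha| = k$ is handled by the splitting
\begin{equation*}
u(x)v(x) - u(y)v(y) = u(x)\bigl[v(x) - v(y)\bigr] + v(y)\bigl[u(x) - u(y)\bigr],
\end{equation*}
pulling $\|u\|_{L^{\infty}}$ and $\|v\|_{L^{\infty}}$ (available from the embedding in (i)) out of the double integral and leaving a Slobodeckij seminorm of the remaining factor. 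I expect the Lorentz-versus-$L^{q}$ bookkeeping in part (iii) to be the most delicate step; the integer case and the fractional Banach algebra estimate are comparatively routine once the $L^{\infty}$ embedding is in hand.
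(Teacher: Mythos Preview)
Your sketch is essentially correct, but it does substantially more work than the paper does. The paper does not prove Theorem~\ref{thm3.3} at all: it is stated as a citation from \cite{Gris85}, and the only argument offered is the one-line remark that items (ii) and (iii) are direct consequences of Theorem~\ref{thm3.2}. Indeed, taking $t=0$ in Theorem~\ref{thm3.2} immediately gives $W^{s,p}\hookrightarrow L^{q}$ whenever $p\leq q$ and $s-\frac{n}{p}\geq -\frac{n}{q}$, which is exactly the range in (ii) and (iii). Item~(i), including the Banach algebra assertion, is simply absorbed into the citation.

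So your approach is a genuine self-contained proof --- extension to $\mathbb{R}^{n}$, Gagliardo--Nirenberg and Morrey at integer exponents, real or complex interpolation to reach noninteger $s$, and a direct Leibniz/telescoping argument for the algebra property --- whereas the paper treats the result as a black box, leaning on the already-cited general embedding $W^{s,p}\hookrightarrow W^{t,q}$ for the parts that can be so obtained. What your route buys is independence from Theorem~\ref{thm3.2} (whose proof the paper also does not give) and a transparent account of where the $L^{\infty}$ bound and continuity in~(i) actually come from; what the paper's route buys is brevity, since once Theorem~\ref{thm3.2} is in hand, (ii) and (iii) are one-liners and there is no Lorentz-space bookkeeping to confront at all.
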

(Items (ii) and (iii) are direct consequences of Theorem
\ref{thm3.2}.)
The following result is a generalization of the well-known
embedding relationships; a simple proof does not appear to be in the
literature, so we include the short proof.
\begin{theorem}[Embedding Theorem III]\lab{thm3.4} Let $\Omega$ be a
bounded open subset of $\mathbb{R}^n$ with Lipschitz continuous
boundary. Suppose $1\leq p, q<\infty$ ($p$ does NOT need to be
less than $q$) and $0\leq t\leq s$ satisfy $s-\frac{n}{p}\geq
t-\frac{n}{q}$. If $s\not\in \mathbb{N}_0$, additionally assume that $s\neq t$. Then  $W^{s,p}(\Omega)\hookrightarrow
W^{t,q}(\Omega)$.
\end{theorem}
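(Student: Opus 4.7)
The plan is to split on whether $p\leq q$ or $p>q$. When $p\leq q$, the hypotheses coincide with those of Theorem~\ref{thm3.2}, so the embedding is immediate; effectively all the work lies in the case $p>q$. For $p>q$ the numerical hypothesis $s-n/p\geq t-n/q$ is already automatic, since $n/p<n/q$ combined with $s\geq t$ yields $s-n/p\geq s-n/q\geq t-n/q$. What must be proved, using only the boundedness of $\Omega$, is therefore $W^{s,p}(\Omega)\hookrightarrow W^{t,q}(\Omega)$ for $0\leq t\leq s$ and $q\leq p$.

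I would factor this through the intermediate space $W^{t,p}(\Omega)$:
\[
W^{s,p}(\Omega)\hookrightarrow W^{t,p}(\Omega)\hookrightarrow W^{t,q}(\Omega),
\]
in which the first arrow is Theorem~\ref{thm3.2} applied with common integrability $p$ (which collapses the Sobolev gap condition to $s\geq t$). The second arrow is where the boundedness of $\Omega$ is used. For integer $t=k$ it is immediate from H\"older's inequality applied to each $\|\partial^{\alpha}u\|_{L^{q}(\Omega)}\leq|\Omega|^{1/q-1/p}\|\partial^{\alpha}u\|_{L^{p}(\Omega)}$ for $|\alpha|\leq k$, followed by summation. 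For non-integer $t=k+\theta$ with $\theta\in(0,1)$, I would use the real-interpolation characterization $W^{t,p}(\Omega)=(W^{k,p}(\Omega),W^{k+1,p}(\Omega))_{\theta,p}$ supplied by Theorem~\ref{thm3.01} together with the interpolation of operators in Theorem~\ref{thm3.03}, the identity map being continuous on each integer endpoint by the case just established.

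The main obstacle I expect is a parameter mismatch at this last step: naively matching third interpolation indices lands $W^{t,p}(\Omega)$ inside $(W^{k,q}(\Omega),W^{k+1,q}(\Omega))_{\theta,p}$, whereas the real-interpolation monotonicity $(A_{0},A_{1})_{\theta,r_{1}}\hookrightarrow(A_{0},A_{1})_{\theta,r_{2}}$ holds only for $r_{1}\leq r_{2}$, that is, in the opposite direction from what is needed to recover $W^{t,q}(\Omega)=(\cdot,\cdot)_{\theta,q}$ when $p>q$. In the generic regime $s>t$ I would circumvent this by trading a small amount $\epsilon>0$ of regularity, replacing $W^{t,p}(\Omega)$ by $W^{t+\epsilon,p}(\Omega)$ in the intermediate step and applying a direct H\"older estimate with exponents $p/q$ and $p/(p-q)$ to the Slobodeckij seminorm: the slack converts the borderline weight $|x-y|^{-n}$ into $|x-y|^{-n+c\epsilon}$ with $c=pq/(p-q)>0$, which is integrable over the bounded set $\Omega\times\Omega$. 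The narrow borderline case $s=t\notin\mathbb{N}$ is then handled by invoking the Besov-scale inclusion $B^{s}_{p,p}(\Omega)\hookrightarrow B^{s}_{q,q}(\Omega)$ (valid for $p\geq q$ on bounded Lipschitz domains) together with the identification $W^{s,p}(\Omega)=B^{s}_{p,p}(\Omega)$ for non-integer $s$ and $1<p<\infty$ already catalogued in Section~\ref{app:spacesbasicdef}.
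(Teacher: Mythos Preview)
Your overall strategy matches the paper's: split on $p\leq q$ versus $p>q$, handle the integer case via H\"older's inequality on the bounded domain, and then appeal to real interpolation for non-integer exponents. The paper factors the chain the other way around, writing
\[
W^{s,p}(\Omega)\hookrightarrow W^{s,q}(\Omega)\hookrightarrow W^{t,q}(\Omega),
\]
and reduces to the critical case $s=t$ first (your factorization through $W^{t,p}$ reduces to the same critical step $W^{t,p}\hookrightarrow W^{t,q}$). In its Case~2 the paper simply writes ``the claim follows from real interpolation'' from the two integer embeddings, without addressing the third-index issue.

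You have in fact put your finger on a genuine subtlety that the paper's proof leaves implicit: interpolating the identity between $W^{k,p}\hookrightarrow W^{k,q}$ and $W^{k+1,p}\hookrightarrow W^{k+1,q}$ yields a map $(W^{k,p},W^{k+1,p})_{\theta,r}\to(W^{k,q},W^{k+1,q})_{\theta,r}$ for the \emph{same} parameter $r$, whereas Theorem~\ref{thm3.01} identifies the source with $W^{s,p}$ only when $r=p$ and the target with $W^{s,q}$ only when $r=q$. Your two workarounds are both sound: the H\"older argument with an $\epsilon$-regularity slack on the Slobodeckij seminorm is a clean direct computation (and disposes of the generic case $s>t$ entirely within the paper's toolbox), and for the borderline $s=t\notin\mathbb{N}$ the inclusion $B^{s}_{p,p}(\Omega)\hookrightarrow B^{s}_{q,q}(\Omega)$ for $p\geq q$ on bounded Lipschitz domains closes the gap. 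The only cost is that this last step reaches outside the paper's stated aim of avoiding Besov-space machinery; if you want to stay strictly inside the interpolation framework, you could observe that the paper's Case~3 factorization makes the borderline case $s=t$ the only one that needs patching, and that case is precisely the embedding $B^{s}_{p,p}\hookrightarrow B^{s}_{q,q}$ in disguise either way.
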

\begin{proof}{\bf (Theorem~\ref{thm3.4})}
 If $p\leq q$, the claim follows from Theorem
\ref{thm3.2}. So we may assume $p>q$. We consider three cases:
\begin{itemize}
\item \textbf{Case1 $s=t=k\in \mathbb{N}_0$:} Note that since $\Omega$
is a \textbf{bounded} open set, $L^{p}(\Omega)\hookrightarrow
L^{q}(\Omega)$. We can write
\begin{equation*}
\| u\|_{W^{k,q}(\Omega)} =\sum_{|\beta|\leq k}\|
\partial^{\beta}u\|_{L^{q}(\Omega)}
\preceq \sum_{|\beta|\leq k}\|
 \partial^{\beta}u\|_{L^{p}(\Omega)}
 =\|
u\|_{W^{k,p}(\Omega)}.
\end{equation*}
which precisely means that $W^{k,p}(\Omega)\hookrightarrow
W^{k,q}(\Omega)$.
\item \textbf{Case2 $\exists k \in \mathbb{N}_0$} such that \textbf{$t=k<s$:} It follows from Theorem
\ref{thm3.2} that $W^{s,p}(\Omega)\hookrightarrow W^{k,p}(\Omega)$. Now notice that, by what was proved in Case 1, $W^{k,p}(\Omega)\hookrightarrow
W^{k,q}(\Omega)$.
\item \textbf{Case3 $\exists k\in\mathbb{N}_0$} such that \textbf{$k<t<s<k+1$ (of course $p>q$):} Let
\begin{align*}
& \theta=s-\floor{s}\qquad (\textrm{so $s=(1-\theta)k+\theta(k+1)$})\\
& \tilde{\theta}=t-\floor{t}\qquad (\textrm{so $t=(1-\tilde{\theta})k+\tilde{\theta}(k+1)$})
\end{align*}
Note that since $t<s$, we have $\tilde{\theta}<\theta$. By what was show in Case 1:
\begin{equation*}
W^{k,p}(\Omega)\hookrightarrow W^{k,q}(\Omega),\qquad W^{k+1,p}(\Omega)\hookrightarrow W^{k+1,q}(\Omega)
\end{equation*}
Since $s=(1-\theta)k+\theta(k+1)$, it follows from interpolation properties of linear operators (Theorem \ref{thm3.03-B}) that
\begin{align*}
W^{s,p}(\Omega)&=(W^{k,p}(\Omega),W^{k+1,p}(\Omega))_{\theta,p}\hookrightarrow (W^{k,q}(\Omega), W^{k+1,q}(\Omega))_{\theta,p}\\
&\stackrel{\textrm{Theorem \ref{thm8201}}}{=}(W^{k+1,q}(\Omega), W^{k,q}(\Omega))_{1-\theta,p}
\end{align*}
Since $1-\theta<1-\tilde{\theta}$ and $W^{k+1,q}\hookrightarrow W^{k,q}$, it follows from Theorem \ref{thm8201} that
\begin{align*}
(W^{k+1,q}(\Omega), W^{k,q}(\Omega))_{1-\theta,p}&\hookrightarrow (W^{k+1,q}(\Omega), W^{k,q}(\Omega))_{1-\tilde{\theta},q}\\
&\hookrightarrow
(W^{k,q}(\Omega), W^{k+1,q}(\Omega))_{\tilde{\theta},q}=W^{t,q}(\Omega)\,.
\end{align*}
Thus $W^{s,p}(\Omega)\hookrightarrow W^{t,q}(\Omega)$ as desired.
\item \textbf{Case4 $\exists k\in\mathbb{N}$} such that \textbf{$t<k<s$ (of course $p>q$):}
Let $\hat{s}$ be a number in the open interval $(t,\floor{t}+1)$. It follows from Theorem
\ref{thm3.2} that $W^{s,p}(\Omega)\hookrightarrow W^{\hat{s},p}(\Omega)$. Now notice that, by what was proved in Case 3,  $W^{\hat{s},p}(\Omega)\hookrightarrow
W^{t,q}(\Omega)$.
\end{itemize}
\end{proof}
\section{A Counter-Example for Generalized Holder-Type Inequalities in $W^{s,p}$}
\label{subsec:maintheorems} Before stating
the main theorems, we discuss a simple case which demonstrates
that multiplication properties of Sobolev-Slobodeckij spaces can
be quite counterintuitive.

\textbf{Notation:} Let $A_i$ and $B_i$ ($i=1,2$) and $C$ be
Sobolev spaces.
\begin{itemize}
\item By writing $A_1 \times A_2 \hookrightarrow B_1 \times B_2$ we
merely mean
 that $A_1 \times A_2 \subseteq B_1 \times B_2$ and if $u\in
A_1$ and $v\in A_2$, then $\|u\|_{B_1}\|v\|_{B_2}\preceq
\|u\|_{A_1}\|v\|_{A_2}$. $(A_1\times A_2=\{a_1a_2:a_1\in
A_1\,,a_2\in A_2\})$
\item By writing $B_1\times B_2\hookrightarrow C$ we mean that $B_1\times B_2\subseteq
C$ and if $u\in B_1$ and $v\in B_2$, then $\|uv\|_C\preceq
\|u\|_{B_1}\|v\|_{B_2}$.
\end{itemize}
\begin{theorem}\lab{thmsobolevholdercount}
Suppose $k\in \mathbb{N}_0$, and
$\frac{1}{p_1}+\frac{1}{p_2}=\frac{1}{p}$. Then
\begin{equation*}
W^{k,p_1}(\mathbb{R}^n)\times
W^{k,p_2}(\mathbb{R}^n)\hookrightarrow W^{k,p}(\mathbb{R}^n).
\end{equation*}
More generally, if $s\geq 0$, then
\begin{equation*}
H^{s,p_1}(\mathbb{R}^n)\times
H^{s,p_2}(\mathbb{R}^n)\hookrightarrow H^{s,p}(\mathbb{R}^n).
\end{equation*}
\end{theorem}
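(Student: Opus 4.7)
My plan is to handle the case $s=k\in\mathbb{N}_0$ directly. For $u\in W^{k,p_1}(\mathbb{R}^n)$ and $v\in W^{k,p_2}(\mathbb{R}^n)$, and any multi-index $\nu$ with $|\nu|\leq k$, the Leibniz rule gives
$$\partial^\nu(uv)=\sum_{\alpha\leq\nu}\binom{\nu}{\alpha}\,\partial^\alpha u\,\partial^{\nu-\alpha}v.$$
Since $|\alpha|,|\nu-\alpha|\leq k$, the factors $\partial^\alpha u$ and $\partial^{\nu-\alpha}v$ lie in $L^{p_1}$ and $L^{p_2}$ respectively, and Hölder's inequality with $\tfrac{1}{p_1}+\tfrac{1}{p_2}=\tfrac{1}{p}$ yields $\|\partial^\alpha u\,\partial^{\nu-\alpha}v\|_p\leq\|\partial^\alpha u\|_{p_1}\|\partial^{\nu-\alpha}v\|_{p_2}$. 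Summing over $|\nu|\leq k$ produces the desired estimate $\|uv\|_{W^{k,p}}\preceq\|u\|_{W^{k,p_1}}\|v\|_{W^{k,p_2}}$, which establishes the first assertion. A preliminary remark: $uv$ is already locally integrable by Hölder at the zeroth level ($L^{p_1}\cdot L^{p_2}\hookrightarrow L^p$), so the distributional Leibniz identity above is rigorously justified.

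\textbf{Step 2 (general $s\geq 0$ via bilinear complex interpolation).} For the $H^{s,p}$ statement, I will bootstrap the integer result to noninteger $s$ using complex interpolation. The base case $s=0$ is just $L^{p_1}\times L^{p_2}\hookrightarrow L^p$, again Hölder. For $s>0$, fix an integer $k$ with $k\geq s$ and set $\theta=s/k\in(0,1]$; assume $\theta<1$, the boundary case having been handled. Since $H^{k,q}(\mathbb{R}^n)=W^{k,q}(\mathbb{R}^n)$ for $1<q<\infty$, Step 1 provides pointwise multiplication as a continuous bilinear map
$$T:H^{k,p_1}(\mathbb{R}^n)\times H^{k,p_2}(\mathbb{R}^n)\to H^{k,p}(\mathbb{R}^n),$$
while the Hölder bound gives the same $T$ as a continuous bilinear map $L^{p_1}\times L^{p_2}\to L^p$. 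Applying Theorem \ref{thm3.03} (bilinear interpolation) to these two instances yields
$$T:[L^{p_1},H^{k,p_1}]_\theta\times[L^{p_2},H^{k,p_2}]_\theta\to[L^{p},H^{k,p}]_\theta.$$
By Theorem \ref{thm3.02} (complex interpolation of Bessel potential spaces at a fixed integrability exponent), each of the three interpolation spaces equals $H^{s,q}(\mathbb{R}^n)$ for $q\in\{p_1,p_2,p\}$, using $s=(1-\theta)\cdot 0+\theta\cdot k$. This is the claimed inclusion.

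\textbf{Main obstacle.} There is no serious obstacle: the entire content is in Step 1, and Step 2 is essentially mechanical once the Leibniz--Hölder estimate is combined with the interpolation identities already catalogued in Section \ref{subsec:importantproperties}. The one technicality worth a sentence in the writeup is the verification that Theorem \ref{thm3.03} applies: $\mathcal{S}'(\mathbb{R}^n)$ serves as the common ambient Hausdorff space for each interpolation couple, and the two realizations of $T$ agree wherever both are defined because both are just pointwise multiplication interpreted distributionally. Note also that the argument genuinely needs complex (rather than real) interpolation to preserve the exponent $p$ in the target space $H^{s,p}$.
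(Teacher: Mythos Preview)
Your proof is correct and follows essentially the same approach as the paper: Leibniz plus H\"older for the integer case, then bilinear complex interpolation to pass to general $s\geq 0$. The only cosmetic difference is that the paper interpolates between the adjacent integers $H^{\lfloor s\rfloor,p}$ and $H^{\lfloor s\rfloor+1,p}$ with $\theta=s-\lfloor s\rfloor$, whereas you interpolate between $H^{0,p}$ and $H^{k,p}$ for some $k\geq s$ with $\theta=s/k$; both choices invoke Theorems~\ref{thm3.02} and~\ref{thm3.03} in the same way.
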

\begin{proof}{\bf (Theorem~\ref{thmsobolevholdercount})}
 For $k\in \mathbb{N}_0$ the claim is a direct consequence of
the definition of Sobolev norm, Leibniz formula
$(\partial^{\alpha}(uv)=\sum_{\beta\leq \alpha} {\alpha \choose
\beta}
\partial^{\alpha-\beta}u\partial^{\beta}v)$, and the Holder's
inequality for Lebesgue spaces.

If $s\not \in \mathbb{N}_0$, then let $k=\floor{s}$ and
$\theta=s-k$. We have
\begin{align*}
& H^{k,p_1}(\mathbb{R}^n)\times
H^{k,p_2}(\mathbb{R}^n)\hookrightarrow H^{k,p}(\mathbb{R}^n),\\
& H^{k+1,p_1}(\mathbb{R}^n)\times
H^{k+1,p_2}(\mathbb{R}^n)\hookrightarrow H^{k+1,p}(\mathbb{R}^n).
\end{align*}
Since
\begin{equation*}
H^{s,p}=[H^{k,p}, H^{k+1,p}]_{\theta},\quad H^{s,p_1}=[H^{k,p_1},
H^{k+1,p_1}]_{\theta},\quad H^{s,p_2}=[H^{k,p_2},
H^{k+1,p_2}]_{\theta}
\end{equation*}
the claim follows from complex interpolation.
\end{proof}
 Now we ask the following question: does the claim of Theorem
\ref{thmsobolevholdercount} hold true for Sobolev-Slobodeckij
spaces? More specifically, suppose $s>0$, $s\not\in \mathbb{Z}$,
and $\frac{1}{p_1}+\frac{1}{p_2}=\frac{1}{p}$. Can we conclude
that $W^{s,p_1}(\mathbb{R}^n)\times
W^{s,p_2}(\mathbb{R}^n)\hookrightarrow W^{s,p}(\mathbb{R}^n)$?
Surprisingly, the answer is \textbf{NO}! In what follows we will
specialize the argument given in \cite{37} for Triebel-Lizorkin
spaces to the case of Sobolev-Slobodeckij spaces to show that if
$s\not \in \mathbb{Z}$ (and of course $s>0$) for
$W^{s,p_1}(\mathbb{R}^n)\times
W^{s_2,p_2}(\mathbb{R}^n)\hookrightarrow W^{s,p}(\mathbb{R}^n)$
to be true it is necessary to have $p_1\leq p$.
\begin{lemma}
Suppose $s>0$ is given. Let $f\in S(\reals^n)$ be a function such
that
\begin{equation*}
{\rm supp} \mathcal{F}f \subseteq \{\xi : |\xi|<\epsilon
\},\,\quad f\not \equiv 0.
\end{equation*}
If $\epsilon$ is sufficiently small, then there exists a sequence
of functions $\{g_N\}_{N=1}^{\infty}$ (each $g_N$ depends on $s$)
such that for any $p,q>1$
\begin{equation*}
\|g_N\|_{F^s_{p,q}}=N^{\frac{1}{q}}\|f\|_p \quad
\textrm{and}\quad \|g_N f\|_{F^s_{p,q}}=N^{\frac{1}{q}}\|f^2\|_p.
\end{equation*}
\end{lemma}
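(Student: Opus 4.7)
The strategy is to modulate $f$ by plane waves of increasing frequency so that the Fourier supports of the summands land in distinct dyadic annuli of the Littlewood-Paley decomposition, effectively forcing the sum in the $F^s_{p,q}$-norm to collapse to a single power $N^{1/q}$. Concretely, pick a unit vector $\xi_0 \in \reals^n$ (say $\xi_0 = e_1$) and a constant $c \in (1/2, 1)$ such that $c \cdot 2^j$ lies in the annulus where $\varphi_j \equiv 1$; taking $c = 3/4$ works since $\varphi_j = 1$ on $B_{2^j} \setminus B_{2^{j-1}}$. Then define
\begin{equation*}
g_N(x) = \sum_{j=J+1}^{J+N} 2^{-sj}\, e^{i \xi_j \cdot x}\, f(x),
\qquad \xi_j := c\,2^j\,\xi_0,
\end{equation*}
where $J$ is an auxiliary parameter chosen large enough (depending on $\epsilon$) as described below.

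The first step is to verify the support condition. Since $\mathcal{F}[e^{i\xi_j \cdot x} f](\xi) = (\mathcal{F}f)(\xi - \xi_j)$, the $j$-th summand has frequency support inside the ball $B_\epsilon(\xi_j)$. By the choice of $c$, this ball sits inside $\{\varphi_j = 1\}$ provided $\epsilon$ is smaller than the distance from $\xi_j$ to the boundary of that annulus, namely $\epsilon < \min(c - \tfrac{1}{2}, 1-c)\cdot 2^J$. Hence for sufficiently large $J$ (or equivalently, sufficiently small $\epsilon$ relative to $2^J$), $\varphi_k \mathcal{F}g_N = \mathcal{F}[2^{-sk} e^{i\xi_k \cdot x} f]$ for $J+1 \le k \le J+N$ and vanishes otherwise. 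Consequently
\begin{equation*}
2^{sk}\,\bigl|\mathcal{F}^{-1}(\varphi_k \mathcal{F}g_N)(x)\bigr| = |f(x)|\chi_{\{J+1 \le k \le J+N\}},
\end{equation*}
and taking the $\ell^q_k$ norm gives $N^{1/q}|f(x)|$ pointwise. Taking $L^p$ in $x$ yields $\|g_N\|_{F^s_{p,q}} = N^{1/q}\|f\|_p$ exactly.

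For the product $g_N f$, the same argument applies with $f$ replaced by $f^2$. The key observation is that $\mathcal{F}[e^{i\xi_j \cdot x} f^2](\xi) = (\mathcal{F}f * \mathcal{F}f)(\xi - \xi_j)$ has support in $B_{2\epsilon}(\xi_j)$, since convolution at most doubles the diameter of supports. Thus by choosing $\epsilon$ a further factor of $2$ smaller (still under the same threshold from the previous step), each modulated summand of $g_N f$ again lies entirely in the region $\{\varphi_j = 1\}$, so the identical computation gives $\|g_N f\|_{F^s_{p,q}} = N^{1/q}\|f^2\|_p$.

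The only real obstacle is the bookkeeping around the partition of unity: one must verify that $\varphi_k = 1$ on a large enough sub-annulus so that the translates $B_\epsilon(\xi_k)$ and $B_{2\epsilon}(\xi_k)$ both fit inside the set where the $k$-th Littlewood-Paley projector is the identity and all other projectors vanish. This is a matter of quantifying the geometric position of $\xi_k$ relative to the annulus $\{2^{k-1} < |\xi| < 2^k\}$, and it is exactly the reason the hypothesis ``$\epsilon$ sufficiently small'' (together with the freedom to shift the index range by $J$) is required. Note that $g_N$ legitimately depends on $s$ through the weights $2^{-sj}$, as stated in the lemma.
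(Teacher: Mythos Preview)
The paper does not actually supply a proof of this lemma; immediately after the statement it only says that the construction of the $g_N$ is based on the Littlewood--Paley characterization of Triebel--Lizorkin spaces and can be found in \cite{37}. Your construction---a finite sum of frequency-modulated copies $\sum_{j=J+1}^{J+N} 2^{-sj} e^{i\xi_j\cdot x} f$ with $|\xi_j|\sim 2^j$ chosen so that each summand lands in a single dyadic shell---is precisely the standard argument from that reference, so your proposal is correct and is the intended proof.

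One small point of bookkeeping: from the paper's description of the partition of unity (only $\varphi_0=1$ on $B_1$ and $\mathrm{supp}\,\varphi_0\subseteq B_2$) it does \emph{not} follow that $\varphi_j\equiv 1$ on the full annulus $B_{2^j}\setminus B_{2^{j-1}}$ as you assert when justifying $c=3/4$; with those hypotheses alone, $\varphi_{j-1}$ could still be nonzero at $|\xi|=(3/4)2^j$. What you actually need is that $\varphi_j\equiv 1$ on some sub-annulus $\{a\cdot 2^j<|\xi|<b\cdot 2^j\}$ with $1/2<a<b<2$, then pick $c\in(a,b)$ and $\epsilon$ small accordingly. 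This can always be arranged by choosing $\varphi_0$ appropriately (and different admissible choices yield equivalent $F^s_{p,q}$ norms, so nothing is lost). You essentially acknowledge this in your final paragraph, so the argument is fine once the claim about the exact annulus is softened.
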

 The construction of $g_N$'s is based on the Littlewood-Paley
characterization of Triebel- Lizorkin spaces and can be found in
\cite{37}.
\begin{proposition}\lab{propholderwrong}
Suppose $s, s_2\geq 0$, $s\not \in \mathbb{Z}$ and $p_1,p_2,p>
1$. If $W^{s,p_1}(\mathbb{R}^n)\times
W^{s_2,p_2}(\mathbb{R}^n)\hookrightarrow W^{s,p}(\mathbb{R}^n)$,
then $p_1\leq p$.
\end{proposition}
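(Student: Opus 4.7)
The plan is to derive a contradiction by exploiting the fact that for $s>0$, $s \notin \mathbb{Z}$, and $1<p,p_1<\infty$, the earlier identifications give $W^{s,p}(\mathbb{R}^n) = F^s_{p,p}(\mathbb{R}^n)$ and $W^{s,p_1}(\mathbb{R}^n) = F^s_{p_1,p_1}(\mathbb{R}^n)$. The preceding lemma produces a single family $\{g_N\}$ whose Triebel-Lizorkin norm satisfies $\|g_N\|_{F^s_{p,q}} = N^{1/q}\|f\|_p$ and $\|g_N f\|_{F^s_{p,q}} = N^{1/q}\|f^2\|_p$ \emph{simultaneously} for every admissible pair $(p,q)$; this allows me to evaluate one norm as $F^s_{p_1,p_1}$ and the other as $F^s_{p,p}$ while keeping $N$ the same, so that the two powers of $N$ on the two sides of the putative embedding differ unless $p_1 \le p$.

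Concretely, I would fix once and for all a nonzero $f \in S(\mathbb{R}^n)$ whose Fourier transform is supported in a ball small enough for the lemma to apply, and note that $\|f\|_{W^{s_2,p_2}}$, $\|f\|_{p_1}$, and $\|f^2\|_p$ are then finite and strictly positive. The assumed embedding $W^{s,p_1} \times W^{s_2,p_2} \hookrightarrow W^{s,p}$ applied to the pair $(g_N, f)$ gives, after invoking the two identifications above,
\[
\|g_N f\|_{F^s_{p,p}} \preceq \|g_N\|_{F^s_{p_1,p_1}} \|f\|_{W^{s_2,p_2}},
\]
with an implicit constant independent of $N$. Substituting the two identities from the lemma (using $q=p$ for the left-hand side and $q=p_1$ for the right) yields
\[
N^{1/p} \|f^2\|_p \preceq N^{1/p_1} \|f\|_{p_1} \|f\|_{W^{s_2,p_2}},
\]
so $N^{1/p - 1/p_1}$ stays bounded as $N \to \infty$. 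This forces $1/p \le 1/p_1$, i.e.\ $p_1 \le p$, which is the desired conclusion.

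The hypothesis $s \notin \mathbb{Z}$ is essential and constitutes the only real conceptual obstacle: it is precisely this condition that permits the identification $W^{s,p} = F^s_{p,p}$ on both sides of the embedding, so that the lemma's $N^{1/q}$ scaling behaves differently on the two sides. For integer $s=k$ one would instead have $W^{k,p} = F^k_{p,2}$, and the $N^{1/q}$ factors would collapse to $N^{1/2}$ on both sides, erasing the asymmetry in $N$ that drives the argument and reproducing the positive multiplication result of Theorem~\ref{thmsobolevholdercount}. Once the scale identifications are in hand, the remaining steps are routine: verifying that a fixed Schwartz function lies in $W^{s_2,p_2}(\mathbb{R}^n)$ with positive norm, and that the lemma's $g_N$ lies in $W^{s,p_1}(\mathbb{R}^n)$ (both immediate from the stated formulas and the Schwartz class inclusions).
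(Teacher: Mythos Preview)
Your argument is correct and is essentially identical to the paper's own proof: both use the identification $W^{s,p}=F^{s}_{p,p}$ for $s\notin\mathbb{Z}$, plug the pair $(g_N,f)$ from the preceding lemma into the assumed embedding, and compare the resulting powers $N^{1/p}$ and $N^{1/p_1}$ to force $p_1\le p$. Your additional remarks explaining why the hypothesis $s\notin\mathbb{Z}$ is essential are a nice touch but go beyond what the paper states.
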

\begin{proof}{\bf (Proposition~\ref{propholderwrong})}
 Note that, since $s\not \in \mathbb{Z}$, we have
$W^{s,p}=F^s_{p,p}$. Consider the product of $f$ and $g_N $; by
assumption we must have
\begin{equation*}
\|g_N. f\|_{W^{s,p}}\preceq \|g_N\|_{W^{s,p_1}}\|f\|_{W^{s_2,p_2}}
\end{equation*}
where the implicit constant is independent of $N$. Therefore
\begin{equation*}
N^{\frac{1}{p}}\|f^2\|_p\preceq
 N^{\frac{1}{p_1}}\|f\|_{p_1}\|f\|_{W^{s_2,p_2}}.
\end{equation*}
So for all $N\in \mathbb{N}$
\begin{equation*}
 0<
\frac{\|f^2\|_p}{\|f\|_{p_1}\|f\|_{W^{s_2,p_2}}}\preceq
N^{\frac{1}{p_1}-\frac{1}{p}},
\end{equation*}
Which implies that $p_1\leq p$.
\end{proof}
\begin{remark}
Proposition~\ref{propholderwrong}, in part, shows that the claim
of Theorem 1.4.4.2 of \cite{Gris85} (in the generality that is
stated in \cite{Gris85}) does not hold true. Also the claim
stated in part (d) of page 47 in \cite{Amann91} (in the generality
that is stated in \cite{Amann91}) does not hold true.
\end{remark}

\section{Sufficient Conditions for $H^{s_1,p_1}\times H^{s_2,p_2}\hookrightarrow
H^{s,p}$,
         $s\geq 0$, $s \in \mathbb{R}$}
\label{sec:bessel}
 We start our main theorems by a theorem on multiplication in
spaces $H^{s,p}(\mathbb{R}^n)$ with $s\geq 0$. The reason that we
begin with a theorem on Bessel potential spaces is that although
for these spaces the situation is considerably simpler (comparing
to Sobolev-Slobodeckij spaces), it showcases the main ideas
 without encountering technical difficulties. The aforementioned
simplicity is due to the fact that we have a uniform formula for
the space
$[H^{s_0,p_0}, H^{s_1,p_1}]_{\theta}$ regardless of whether each
of $s_0$, $s_1$, or $(1-\theta)s_0+\theta s_1$ is an integer or
not.
This first result is classical and well-known; however, the
following fairly short proof based on complex interpolation and
embedding theorems does not appear to be in the literature, so we
include it.
\begin{theorem}[Pointwise multiplication in spaces $H^{s,p}(\mathbb{R}^n)$ with $s\geq 0$]\lab{thm4.0}
Assume $s_i,s$ and $1 < p_i \leq p< \infty$ ($i=1,2$) are real
numbers satisfying
\begin{enumerate}[(i)]
\item  $s_i \geq s$, 
\item  $s\geq 0$,
\item  $s_i-s\geq n(\dfrac{1}{p_i}-\dfrac{1}{p})$,
\item  $s_1+s_2-s>n(\dfrac{1}{p_1}+\dfrac{1}{p_2}-\dfrac{1}{p})$.
\end{enumerate}
Claim: If $u\in H^{s_1,p_1}(\mathbb{R}^n)$ and $v\in
H^{s_2,p_2}(\mathbb{R}^n)$, then $uv \in H^{s,p}(\mathbb{R}^n)$,
and moreover, the pointwise multiplication of functions is a
continuous bilinear map
\begin{equation*}
H^{s_1,p_1}(\mathbb{R}^n)\times
H^{s_2,p_2}(\mathbb{R}^n)\rightarrow H^{s,p}(\mathbb{R}^n).
\end{equation*}
\end{theorem}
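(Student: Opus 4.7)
The plan is to prove the theorem by combining the generalized Hölder inequality for Bessel potential spaces (Theorem~\ref{thmsobolevholdercount}) with the Sobolev embeddings (Theorems~\ref{thm3.2} and~\ref{thm3.3}), using complex interpolation (Theorems~\ref{thm3.02} and~\ref{thm3.03}) to bridge between special cases. The basic template is to first use embeddings to move both factors into Bessel potential spaces of common smoothness $s$, and then invoke the Hölder-type inequality.

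Concretely, condition (iii) combined with (i) and Theorem~\ref{thm3.2} gives, for each $i=1,2$, an embedding $H^{s_i,p_i}(\mathbb{R}^n)\hookrightarrow H^{s,q_i}(\mathbb{R}^n)$ for any $q_i\in[p_i,\infty)$ satisfying $1/q_i\geq 1/p_i-(s_i-s)/n$; that is, the admissible range for $1/q_i$ is the interval $[\max\{0,\,1/p_i-(s_i-s)/n\},\,1/p_i]$. If $q_1,q_2$ can be chosen in these ranges with $1/q_1+1/q_2=1/p$, Theorem~\ref{thmsobolevholdercount} yields $H^{s,q_1}\times H^{s,q_2}\hookrightarrow H^{s,p}$, and composition with the embeddings gives the desired conclusion. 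The hypothesis $p_i\leq p$ ensures the upper bound $1/p_1+1/p_2\geq 1/p$ is no obstruction; the lower bound is controlled by the strict inequality (iv).

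In borderline regimes (notably when both $s_i-s<n/p_i$ so the lower bound for the admissible sum is positive, yet the margin provided by (iv) is not enough to reach $1/p$ exactly), I would complement the Hölder-via-embedding step with two further ingredients. First, whenever $s_ip_i>n$ for some $i$, Theorem~\ref{thm3.3} makes $H^{s_i,p_i}$ a Banach algebra and places it inside $L^\infty$, and then (i) and (iii) guarantee $H^{s_i,p_i}\hookrightarrow H^{s,p}$, which combined with the algebra/multiplier property handles the multiplication directly. Second, to patch the two regimes into a single continuous bilinear bound under (i)--(iv), I would invoke complex interpolation of the bilinear map $(u,v)\mapsto uv$ via Theorem~\ref{thm3.03}, exploiting the uniform identity $H^{s,p}=[H^{s_0,p_0},H^{s_1,p_1}]_\theta$ from Theorem~\ref{thm3.02} that, unlike its Sobolev--Slobodeckij counterpart, holds without any integrality caveat on the exponents.

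The main obstacle is the combinatorial bookkeeping of parameters: verifying that for every admissible configuration of (i)--(iv) with $1<p_i\leq p<\infty$, some pair $(q_1,q_2)$ exists in the prescribed intervals with $1/q_1+1/q_2=1/p$, or, failing that, that one of the algebra/interpolation fallbacks applies. Because (iv) is strict, this slack is always available, which is precisely the structural reason the proof remains short in the Bessel setting—and, as Proposition~\ref{propholderwrong} and the counter-example of Section~\ref{subsec:maintheorems} highlight, exactly the reason the analogous strategy breaks down for Sobolev--Slobodeckij spaces.
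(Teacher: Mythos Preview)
Your primary strategy has a genuine gap. Embedding each factor to smoothness $s$ and then applying the H\"older-type inequality $H^{s,q_1}\times H^{s,q_2}\hookrightarrow H^{s,p}$ requires $1/q_1+1/q_2=1/p$ with $1/q_i$ in the interval $[\max\{0,\,1/p_i-(s_i-s)/n\},\,1/p_i]$. When both $s_i-s<n/p_i$, the infimum of the admissible sum is $1/p_1+1/p_2-(s_1+s_2-2s)/n$, and for this to be at most $1/p$ you need $s_1+s_2-2s\ge n(1/p_1+1/p_2-1/p)$, which is strictly stronger than (iv) whenever $s>0$. Concretely, take $n=3$, $p_1=p_2=p=2$, $s=1$, $s_1=s_2=1.3$: all of (i)--(iv) hold, yet the admissible $1/q_i$ lie in $[0.4,0.5]$ and their sum can never equal $1/p=0.5$.

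Your fallbacks do not close the gap. In the example above $s_ip_i=2.6<3=n$ and $sp=2<3$, so no algebra property is available. More importantly, even when $s_ip_i>n$, the embedding $H^{s_i,p_i}\hookrightarrow L^\infty$ does \emph{not} make $H^{s_i,p_i}$ a multiplier for $H^{s,p}$ when $s>0$: bounded functions do not preserve Sobolev regularity. The interpolation idea is right in spirit but you have to name the endpoints. What is missing is exactly the paper's Lemma~\ref{lem4.1}: $H^{n/p+\epsilon,p}\times H^{t,p}\hookrightarrow H^{t,p}$ for all $t\in[0,n/p]$, obtained by complex interpolation between the algebra endpoint $t=n/p+\epsilon$ and the trivial $L^\infty\times L^p$ endpoint $t=0$. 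With this multiplier lemma in hand, a second complex interpolation between $(s,n/p+\epsilon)$ and $(n/p+\epsilon,s)$ yields the case $p_1=p_2=p$, $s\le n/p$. The reduction from general $p_i$ to $p_1=p_2=p$ should then be done by embedding $H^{s_i,p_i}\hookrightarrow H^{\tilde s_i,p}$ with $\tilde s_i=s_i-n/p_i+n/p$ (preserving the excess smoothness $\tilde s_i\ge s$), not by collapsing both factors all the way down to smoothness $s$.
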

\begin{proof}{\bf (Theorem~\ref{thm4.0})}
 Our proof consists of two steps. In the first step
we consider the special case $p_1=p_2=p$, and then in the second
step we prove the general case based on the special case that is
proved in Step 1.
\begin{itemizeXALI}
\item \textbf{Step 1:} Here we want to prove the theorem for the
special case $p=p_1=p_2$. In this case the assumptions can be
rewritten as follows:
\begin{equation*}
s_1,s_2\geq s \geq 0, \quad s_1+s_2-s>\frac{n}{p}.
\end{equation*}
In order to proceed, we state and prove a simple lemma.
\begin{lemma}\lab{lem4.1}
\begin{align*}
& \forall\, \epsilon>0 \quad \forall t\in[0,\frac{n}{p}]\quad
H^{t,p}(\mathbb{R}^n)\times H^{\frac{n}{p}+\epsilon,p}(\mathbb{R}^n)\hookrightarrow H^{t,p}(\mathbb{R}^n).\\
& \forall\, \epsilon>0 \quad \forall t\in[0,\frac{n}{p}]\quad
H^{\frac{n}{p}+\epsilon,p}(\mathbb{R}^n) \times
H^{t,p}(\mathbb{R}^n)\hookrightarrow H^{t,p}(\mathbb{R}^n).
\end{align*}
\end{lemma}
\textbf{Proof of the Lemma} Clearly it is enough to prove the
first statement. Let $\epsilon>0$ be given. Since
$\frac{n}{p}+\epsilon>\frac{n}{p}$,
$H^{\frac{n}{p}+\epsilon,p}(\mathbb{R}^n)$ is an algebra and
\begin{equation}\lab{eqnbesselalgebra11}
H^{\frac{n}{p}+\epsilon,p}(\mathbb{R}^n)\times
H^{\frac{n}{p}+\epsilon,p}(\mathbb{R}^n)\hookrightarrow
H^{\frac{n}{p}+\epsilon,p}(\mathbb{R}^n).
\end{equation}
Also $H^{\frac{n}{p}+\epsilon,p}(\mathbb{R}^n)\hookrightarrow
L^{\infty}(\mathbb{R}^n)$. Hence
\begin{equation}\lab{eqnlebesgue1}
H^{\frac{n}{p}+\epsilon,p}(\mathbb{R}^n)\times
H^{0,p}(\mathbb{R}^n)\hookrightarrow H^{0,p}(\mathbb{R}^n)\quad
(L^{\infty}\times L^{p}\hookrightarrow L^{p})\,.
\end{equation}
By complex interpolation between (\ref{eqnbesselalgebra11}) and
(\ref{eqnlebesgue1}) we get
\begin{equation*}
\forall\, \theta \in [0,1] \quad
H^{\frac{n}{p}+\epsilon,p}(\mathbb{R}^n)\times
H^{\theta(\frac{n}{p}+\epsilon),p}(\mathbb{R}^n)\hookrightarrow
H^{\theta(\frac{n}{p}+\epsilon),p}(\mathbb{R}^n)
\end{equation*}
which clearly implies the claim of the Lemma.

Now, using the above lemma, we can prove the theorem for the
special case $p=p_1=p_2$. To this end we consider two cases:
\begin{itemizeXX}
\item \textbf{Case 1 $s>\frac{n}{p}$:} If $s>\frac{n}{p}$, then
 $H^{s,p}(\mathbb{R}^n)$is an algebra and we can write
\begin{align*}
H^{s_1,p_1}(\mathbb{R}^n) \times
H^{s_2,p_2}(\mathbb{R}^n)&\hookrightarrow
H^{s,p}(\mathbb{R}^n)\times
H^{s,p}(\mathbb{R}^n)\quad (\textrm{by assumption $s_1,s_2\geq s$})\\
&\hookrightarrow H^{s,p}(\mathbb{R}^n).
\end{align*}
\item \textbf{Case 2 $s\leq \frac{n}{p}$:} Let
$\epsilon=s_1+s_2-s-\frac{n}{p}>0$. By Lemma \ref{lem4.1} we have
\begin{align}\lab{complexintembed1}
&
H^{s,p}(\mathbb{R}^n)\times H^{\frac{n}{p}+\epsilon,p}(\mathbb{R}^n)\hookrightarrow H^{s,p}(\mathbb{R}^n).\\
& H^{\frac{n}{p}+\epsilon,p}(\mathbb{R}^n) \times
H^{s,p}(\mathbb{R}^n)\hookrightarrow
H^{s,p}(\mathbb{R}^n).\lab{complexintembed2}
\end{align}
Note that
\begin{equation*}
s\leq s_2 \Longrightarrow s_1\leq s_1+s_2-s \Longrightarrow s_1
\leq \frac{n}{p}+\epsilon.
\end{equation*}
So there exists $\theta \in [0,1]$ such that
$(1-\theta)s+\theta(\frac{n}{p}+\epsilon)=s_1$. Clearly
\begin{equation*}
[(1-\theta)s+\theta(\frac{n}{p}+\epsilon)]+[(1-\theta)(\frac{n}{p}+\epsilon)+\theta
 s]=s+\frac{n}{p}+\epsilon=s_1+s_2.
\end{equation*}
That is, $s_1+ [(1-\theta)(\frac{n}{p}+\epsilon)+\theta s
]=s_1+s_2$ which means that
 $(1-\theta)(\frac{n}{p}+\epsilon)+\theta s=s_2$. Consequently
\begin{equation*}
[H^{s,p}(\mathbb{R}^n),H^{\frac{n}{p}+\epsilon,p}(\mathbb{R}^n)]_{\theta}=H^{s_1,p}(\mathbb{R}^n),\quad
[H^{\frac{n}{p}+\epsilon,p}(\mathbb{R}^n),
H^{s,p}(\mathbb{R}^n)]_{\theta}=H^{s_2,p}(\mathbb{R}^n).
\end{equation*}
So using complex interpolation, (\ref{complexintembed1}), and
(\ref{complexintembed2}) we get
\begin{equation*}
H^{s_1,p_1}(\mathbb{R}^n)\times
H^{s_2,p_2}(\mathbb{R}^n)\hookrightarrow H^{s,p}(\mathbb{R}^n).
\end{equation*}
\end{itemizeXX}
\item \textbf{Step 2:} Now we are in the position to prove the
general case. Let
\begin{equation*}
\tilde{s}_1=s_1-\frac{n}{p_1}+\frac{n}{p},\quad
\tilde{s}_2=s_2-\frac{n}{p_2}+\frac{n}{p}.
\end{equation*}
We just need to prove the following claim:\\
\noindent \textbf{Claim:}
\begin{enumerate}[(i)]
\item $H^{\tilde{s}_1,p}(\mathbb{R}^n)\times H^{\tilde{s}_2,p}(\mathbb{R}^n)\hookrightarrow
 H^{s,p}(\mathbb{R}^n)$.
\item $H^{s_1,p_1}(\mathbb{R}^n)\hookrightarrow H^{\tilde{s}_1,p}(\mathbb{R}^n)$.
\item $H^{s_2,p_2}(\mathbb{R}^n)\hookrightarrow H^{\tilde{s}_2,p}(\mathbb{R}^n)$.
\end{enumerate}
Indeed, if we prove the above claim, then
\begin{equation*}
H^{s_1,p_1}(\mathbb{R}^n)\times
H^{s_2,p_2}(\mathbb{R}^n)\hookrightarrow
H^{\tilde{s}_1,p}(\mathbb{R}^n)\times
H^{\tilde{s}_2,p}(\mathbb{R}^n)\hookrightarrow
H^{s,p}(\mathbb{R}^n).
\end{equation*}
\begin{itemizeXX}
\item \textbf{Proof of (i):} By Step 1 we need to check the
following items:
\begin{align*}
& \tilde{s}_1\geq s \quad (\textrm{true because $s_1-s\geq
n(\frac{1}{p_1}-\frac{1}{p})$})\\
& \tilde{s}_2\geq s \quad (\textrm{true because $s_2-s\geq
n(\frac{1}{p_2}-\frac{1}{p})$})\\
& \tilde{s}_1+\tilde{s}_2-s>\frac{n}{p}
\end{align*}
The last item is true because
\begin{equation*}
s_1+s_2-s>n(\frac{1}{p_1}+\frac{1}{p_2}-\frac{1}{p})\Longrightarrow
(s_1-\frac{n}{p_1}+\frac{n}{p})+(s_2-\frac{n}{p_2}+\frac{n}{p})-s>\frac{n}{p}\Longrightarrow
\tilde{s}_1+\tilde{s}_2-s>\frac{n}{p}.
\end{equation*}
\item \textbf{Proof of (ii):} According to Embedding Theorem I we
must check the following items:
\begin{align*}
&p_1\leq p \quad (\textrm{true by assumption})\\
& s_1\geq \tilde{s}_1 \quad (\textrm{true because $p\geq p_1
\Rightarrow \frac{n}{p_1}\geq \frac{n}{p}\Rightarrow s_1\geq
s_1-\frac{n}{p_1}+\frac{n}{p}$})\\
& s_1-\frac{n}{p_1}\geq \tilde{s}_1-\frac{n}{p} \quad
(\textrm{true because
$\tilde{s}_1-\frac{n}{p}=s_1-\frac{n}{p_1}+\frac{n}{p}-\frac{n}{p}=s_1-\frac{n}{p_1}$
})
\end{align*}
\item \textbf{Proof of (iii):} Completely analogous to the proof of the
previous item!
\end{itemizeXX}
\end{itemizeXALI}
\end{proof}
\newpage
\section{Sufficient Conditions for $W^{s_1,p_1}\times W^{s_2,p_2}\hookrightarrow
W^{s,p}$,
         $s\geq 0$, $s \in \mathbb{N}_0$}
\label{sec:integers}

We now consider the case where the product belongs to a Sobolev
space with integer smoothness index. The proof of the following
theorem is based on the classical definition of Sobolev spaces,
Holder's inequality for Lebesgue spaces, and previously stated
embedding theorems.

\begin{theorem}\lab{thm4.6}
Let $s_i, s$ and $1 \leq p, p_i < \infty$ ($i=1,2$) be real
numbers satisfying
\begin{enumerate}[(i)]
\item  $s_i \geq s\geq 0$ 
\item  $s\in \mathbb{N}_0$,
\item  $s_i-s\geq n(\dfrac{1}{p_i}-\dfrac{1}{p})$,
\item  $s_1+s_2-s>n(\dfrac{1}{p_1}+\dfrac{1}{p_2}-\dfrac{1}{p})\geq
0$.
\end{enumerate}
where the strictness of the inequalities in items (iii) and (iv)
can be interchanged.\\ Claim: If $u\in W^{s_1,p_1}(\mathbb{R}^n)$
and $v\in W^{s_2,p_2}(\mathbb{R}^n)$, then $uv \in
W^{s,p}(\mathbb{R}^n)$ and moreover the pointwise multiplication
of functions is a continuous bilinear map
\begin{equation*}
W^{s_1,p_1}(\mathbb{R}^n)\times
W^{s_2,p_2}(\mathbb{R}^n)\rightarrow W^{s,p}(\mathbb{R}^n).
\end{equation*}
\end{theorem}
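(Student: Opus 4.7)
The plan is to exploit the fact that $s\in\mathbb{N}_0$, so that $W^{s,p}(\mathbb{R}^n)$ coincides with the classical Sobolev space whose norm is $\sum_{|\alpha|\leq s}\|\partial^\alpha\cdot\|_{L^p}$. Starting from the Leibniz expansion $\partial^\alpha(uv)=\sum_{\beta\leq\alpha}\binom{\alpha}{\beta}\partial^{\alpha-\beta}u\,\partial^\beta v$ for each $|\alpha|\leq s$, it suffices to control $\|\partial^{\alpha-\beta}u\cdot\partial^\beta v\|_{L^p(\mathbb{R}^n)}$ for every pair of orders $(j,k):=(|\alpha-\beta|,|\beta|)$ with $j+k\leq s$, and then sum the finitely many estimates over multi-indices. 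For each such pair I would apply H\"older with exponents $(q_1,q_2)$ satisfying $\tfrac{1}{q_1}+\tfrac{1}{q_2}=\tfrac{1}{p}$ and invoke the Sobolev embeddings $W^{s_1-j,p_1}(\mathbb{R}^n)\hookrightarrow L^{q_1}(\mathbb{R}^n)$ and $W^{s_2-k,p_2}(\mathbb{R}^n)\hookrightarrow L^{q_2}(\mathbb{R}^n)$ supplied by Theorems~\ref{thm3.2} and~\ref{thm3.3}; note that $s_1-j,\,s_2-k\geq 0$ since $j,k\leq s\leq s_i$ by (i).

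The crux is to verify that such exponents can always be found. On $\mathbb{R}^n$ the embeddings enforce $q_i\geq p_i$ and $\tfrac{1}{q_i}\geq \tfrac{1}{p_i}-\tfrac{s_i-j_i}{n}$ (writing $j_1=j,\ j_2=k$), with the value $q_i=\infty$ additionally permitted whenever $(s_i-j_i)p_i>n$. Setting $\alpha_i:=1/q_i$ and $a:=\max\{0,1/p_1-(s_1-j)/n\}$, $b:=\max\{0,1/p_2-(s_2-k)/n\}$, feasibility reduces to showing that the affine segment $\alpha_1+\alpha_2=1/p$ meets the rectangle $[a,1/p_1]\times[b,1/p_2]$. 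The upper endpoint condition $1/p\leq 1/p_1+1/p_2$ is the ``$\geq 0$'' tail of (iv), while the lower endpoint condition $a+b\leq 1/p$ follows from dropping the maxima, using $j+k\leq s$, and invoking the strict inequality $s_1+s_2-s>n(1/p_1+1/p_2-1/p)$ of (iv). The remaining individual constraints ($a\leq 1/p_1$, $1/p-1/p_2\leq 1/p_1$, $a\leq 1/p-b$, and so on) reduce either to (iv) or to (iii) applied at the extremal case $j=s$, $k=0$ (and its symmetric counterpart); this last reduction is where the interchange of strictness stated in the theorem becomes relevant.

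The main obstacle will be the borderline cases of the Sobolev embedding, namely $(s_i-j_i)p_i=n$ (where $W^{s_i-j_i,p_i}$ embeds into $L^{q_i}$ for every finite $q_i\geq p_i$ but not into $L^\infty$) and the pairs $(j,k)$ which saturate (iv). In those situations the natural choice of $(\alpha_1,\alpha_2)$ lies exactly on the boundary of the admissible rectangle, so one must shift slightly to a valid interior point; the strict version of (iv) furnishes a positive slack $\delta:=(s_1+s_2-s)/n-(1/p_1+1/p_2-1/p)>0$ that can absorb this perturbation at the cost of a constant depending on $\delta$. Once a feasible $(q_1,q_2)$ has been chosen for every admissible $(j,k)$, the chain Leibniz--H\"older--embedding yields
\begin{equation*}
\|uv\|_{W^{s,p}(\mathbb{R}^n)}\preceq \|u\|_{W^{s_1,p_1}(\mathbb{R}^n)}\|v\|_{W^{s_2,p_2}(\mathbb{R}^n)},
\end{equation*}
which is the asserted continuous bilinear map.
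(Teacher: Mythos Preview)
Your approach is essentially the paper's: Leibniz expansion followed by H\"older and Sobolev embeddings, with the feasibility of the H\"older exponents verified from hypotheses (iii) and (iv); your geometric ``segment meets rectangle'' phrasing is just a repackaging of the paper's four-case analysis of the intersection
\[
(0,\tfrac{1}{p})\cap[\tfrac{1}{p_1}-\tfrac{s_1-|\alpha-\beta|}{n},\tfrac{1}{p_1}]\cap[\tfrac{1}{p}-\tfrac{1}{p_2},\tfrac{1}{p}-\tfrac{1}{p_2}+\tfrac{s_2-|\beta|}{n}].
\]
Two points the paper treats more carefully than your sketch: first, it justifies the Leibniz formula by initially taking $v\in C_c^\infty(\mathbb{R}^n)$ and then passing to the limit via density (your write-up applies Leibniz directly to Sobolev functions without comment); second, your slack argument is written only for the case where (iv) is strict, whereas the interchanged case---(iii) strict, (iv) with equality---requires a separate check that the paper carries out by noting that equality in (iii) can then only bite when $|\alpha-\beta|=s$ or $|\beta|=s$, at which point one of the factors lands in $L^\infty$ via $s_j>n/p_j$.
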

\begin{remark}
\label{rem:1}
Note that $p_i$ is not required to be
less than or equal to $p$ in the statement of Theorem~\ref{thm4.6}.
It is the restriction that $s$ be an integer in the theorem that
makes it possible to remove the ordering between $p_i$ and $p$.
We will see below in Theorem~\ref{thm4.2} that alternatively, one can
restrict consideration to a bounded domain $\Omega$ in place
of $\mathbb{R}^n$, allowing $s$ to be noninteger, yet still removing
the ordering restriction between $p_i$ and $p$.
\end{remark}
\begin{proof}{\bf (Theorem~\ref{thm4.6})}
Let $u\in W^{s_1,p_1}(\mathbb{R}^n)$ and $v\in
W^{s_2,p_2}(\mathbb{R}^n)$. Our goal is to prove that $\|
 uv\|_{s,p}\preceq \| u\|_{s_1,p_1}\| v\|_{s_2,p_2}$. We have
\begin{equation*}
\| uv\|_{s,p}=\sum_{|\alpha|\leq s}\|
 \partial^{\alpha}(uv)\|_{p}.
\end{equation*}
So, it is enough to prove that for all $|\alpha|\leq s$, $\|
 \partial^{\alpha}(uv)\|_{p}\preceq \| u\|_{s_1,p_1}\|
 v\|_{s_2,p_2}$. For now let's assume $v\in
 C_c^{\infty}(\mathbb{R}^n)$. So we can use the Leibniz
 formula  (see e.g. \cite{32}) to write
\begin{equation*}
\partial^{\alpha}(uv)=\sum_{\beta\leq \alpha} {\alpha \choose
\beta}
\partial^{\alpha-\beta}u\partial^{\beta}v.
\end{equation*}
Thus we just need to show that
\begin{equation*}
\forall\, |\alpha|\leq s\quad \forall \, \beta\leq \alpha \quad
\| \partial^{\alpha-\beta}u\partial^{\beta}v
\|_p \preceq \|
u\|_{s_1,p_1}
\| v\|_{s_2,p_2}.
\end{equation*}
Fix $\alpha,\beta \in \mathbb{N}_0^n$ such that $|\alpha|\leq s$
and $\beta\leq \alpha$. In what follows we will
prove the following claim:\\
\noindent \textbf{Claim:} There exist $r\in [1,\infty]$ and $q\in
[1,\infty]$ such that
\begin{equation}\lab{eqnholderembed1}
\frac{1}{r}+\frac{1}{q}=\frac{1}{p},\quad
W^{s_1-|\alpha-\beta|,p_1}(\mathbb{R}^n)\hookrightarrow
L^r(\mathbb{R}^n), \quad
W^{s_2-|\beta|,p_2}(\mathbb{R}^n)\hookrightarrow
L^q(\mathbb{R}^n).
\end{equation}
For the moment, let's assume the above claim is true. Then
\begin{align*}
& u\in W^{s_1,p_1}(\mathbb{R}^n) \Longrightarrow \partial
^{\alpha-\beta} u \in
W^{s_1-|\alpha-\beta|,p_1}(\mathbb{R}^n)\hookrightarrow
L^r(\mathbb{R}^n),\\
& v\in W^{s_2,p_2}(\mathbb{R}^n) \Longrightarrow \partial
^{\beta} v \in W^{s_2-|\beta|,p_2}(\mathbb{R}^n)\hookrightarrow
L^q(\mathbb{R}^n),
\end{align*}
and therefore
\begin{align*}
\| \partial^{\alpha-\beta}u\partial^{\beta}v
\|_p \leq \| \partial^{\alpha-\beta}u\|_{r}
\| \partial^{\beta}v
\|_q &\preceq
 \| \partial^{\alpha-\beta}u\|_{s_1-|\alpha-\beta|,p_1}
\| \partial^{\beta}v
\|_{s_2-|\beta|,p_2}\\
&\preceq \| u\|_{s_1,p_1}
\| v\|_{s_2,p_2}\,.
\end{align*}
So it is enough to prove the above claim. We consider two cases
separately:\\
\textbf{Case 1:} $s_i-s> n(\frac{1}{p_i}-\frac{1}{p})$ $(i=1,2)$
and $s_1+s_2-s \geq n(\frac{1}{p_1}+\frac{1}{p_2}-\frac{1}{p})\geq 0$.

As a direct consequence of assumptions we have
\begin{align}
& \frac{1}{p_1}-\frac{s_1-|\alpha-\beta|}{n}\leq
\frac{1}{p_1}-\frac{s_1-s}{n}<\frac{1}{p}.\lab{eqngood1}\\
& \frac{1}{p}-\frac{1}{p_2}+\frac{s_2-|\beta|}{n}\geq
\frac{1}{p}-\frac{1}{p_2}+\frac{s_2-s}{n}>0. \lab{eqngood2}
\end{align}
In what follows we will show that there exist $r\in [1,\infty)$
and $q\in [1,\infty)$ that satisfy (\ref{eqnholderembed1}).
According to Theorem \ref{thm3.2} it is enough to show that there
exist $r$ and $q$ that satisfy the following conditions:
\begin{align*}
&0<\frac{1}{r}\leq 1,\quad  0<\frac{1}{q}\leq 1\, \quad
\frac{1}{r}+\frac{1}{q}=\frac{1}{p},\\
&\frac{1}{r}\leq\frac{1}{p_1},\quad \frac{1}{q}\leq \frac{1}{p_2},\\
& s_1-|\alpha-\beta|-\frac{n}{p_1}\geq 0-\frac{n}{r},\quad
s_2-|\beta|-\frac{n}{p_2}\geq 0-\frac{n}{q}.
\end{align*}
In fact, if we let $R=\frac{1}{r}$ and $Q=\frac{1}{q}$, then our
goal is to show that there exist $0<R\leq 1$ and $0<Q\leq 1$ such
that
\begin{align*}
R+Q= \frac{1}{p},\quad
\frac{1}{p_1}-\frac{s_1-|\alpha-\beta|}{n}\leq R\leq
\frac{1}{p_1},\quad \frac{1}{p_2}-\frac{s_2-|\beta|}{n}\leq Q
\leq \frac{1}{p_2}.
\end{align*}
Note that since $\frac{1}{p_1}\leq 1$ and $\frac{1}{p_2}\leq 1$,
conditions $R\leq 1$ and $Q\leq 1$ are superfluous. So, we need to
show that there exists $0<R<\frac{1}{p}$ such that
\begin{align*}
& \frac{1}{p_1}-\frac{s_1-|\alpha-\beta|}{n}\leq R\leq
\frac{1}{p_1},\\
& \frac{1}{p_2}-\frac{s_2-|\beta|}{n}\leq \frac{1}{p}-R\leq
\frac{1}{p_2}\quad (\Longleftrightarrow
\frac{1}{p}-\frac{1}{p_2}\leq R\leq
\frac{1}{p}-\frac{1}{p_2}+\frac{s_2-|\beta|}{n}).
\end{align*}
Consequently, it is enough to show that the following intersection
is nonempty:
\begin{equation*}
(0,\frac{1}{p})\cap
[\frac{1}{p_1}-\frac{s_1-|\alpha-\beta|}{n},\frac{1}{p_1}]\cap
[\frac{1}{p}-\frac{1}{p_2},\frac{1}{p}-\frac{1}{p_2}+\frac{s_2-|\beta|}{n}].
\end{equation*}
Note that by (\ref{eqngood1}),
$\frac{1}{p_1}-\frac{s_1-|\alpha-\beta|}{n}<\frac{1}{p}$ and so
the first intersection is nonempty. We may consider four cases:
\begin{enumerate}[(i)]
\item \textbf{$\frac{1}{p_1}-\frac{s_1-|\alpha-\beta|}{n}\leq 0$,
$\frac{1}{p_1}<\frac{1}{p}$:}
\begin{equation*}
(0,\frac{1}{p})\cap
[\frac{1}{p_1}-\frac{s_1-|\alpha-\beta|}{n},\frac{1}{p_1}]=(0,\frac{1}{p_1}].
\end{equation*}
Now note that by assumption $\frac{1}{p_1}\geq
\frac{1}{p}-\frac{1}{p_2}$ and also by (\ref{eqngood2}),
$\frac{1}{p}-\frac{1}{p_2}+\frac{s_2-|\beta|}{n}>0$. Hence
\begin{equation*}
(0,\frac{1}{p_1}] \cap
[\frac{1}{p}-\frac{1}{p_2},\frac{1}{p}-\frac{1}{p_2}+\frac{s_2-|\beta|}{n}]\neq
\emptyset
\end{equation*}
\item \textbf{$\frac{1}{p_1}-\frac{s_1-|\alpha-\beta|}{n}\leq 0$,
$\frac{1}{p_1}\geq \frac{1}{p}$:}
\begin{equation*}
(0,\frac{1}{p})\cap
[\frac{1}{p_1}-\frac{s_1-|\alpha-\beta|}{n},\frac{1}{p_1}]=(0,\frac{1}{p}).
\end{equation*}
Clearly $\frac{1}{p}> \frac{1}{p}-\frac{1}{p_2}$ and also by
(\ref{eqngood2}),
$\frac{1}{p}-\frac{1}{p_2}+\frac{s_2-|\beta|}{n}>0$. Hence
\begin{equation*}
(0,\frac{1}{p}) \cap
[\frac{1}{p}-\frac{1}{p_2},\frac{1}{p}-\frac{1}{p_2}+\frac{s_2-|\beta|}{n}]\neq
\emptyset
\end{equation*}
\item \textbf{$\frac{1}{p_1}-\frac{s_1-|\alpha-\beta|}{n}> 0$,
$\frac{1}{p}\leq \frac{1}{p_1}$:}
\begin{equation*}
(0,\frac{1}{p})\cap
[\frac{1}{p_1}-\frac{s_1-|\alpha-\beta|}{n},\frac{1}{p_1}]=[\frac{1}{p_1}-\frac{s_1-|\alpha-\beta|}{n},\frac{1}{p}).
\end{equation*}
Clearly $\frac{1}{p}> \frac{1}{p}-\frac{1}{p_2}$ and also by
assumption $s_1+s_2-s\geq
n(\frac{1}{p_1}+\frac{1}{p_2}-\frac{1}{p})$ and so
$\frac{1}{p_1}-\frac{s_1-|\alpha-\beta|}{n}\leq
\frac{1}{p}-\frac{1}{p_2}+\frac{s_2-|\beta|}{n}$. Consequently
\begin{equation*}
[\frac{1}{p_1}-\frac{s_1-|\alpha-\beta|}{n},\frac{1}{p})\cap
[\frac{1}{p}-\frac{1}{p_2},\frac{1}{p}-\frac{1}{p_2}+\frac{s_2-|\beta|}{n}]\neq
\emptyset
\end{equation*}
\item \textbf{$\frac{1}{p_1}-\frac{s_1-|\alpha-\beta|}{n}> 0$,
$\frac{1}{p_1}< \frac{1}{p}$:}
\begin{equation*}
(0,\frac{1}{p})\cap
[\frac{1}{p_1}-\frac{s_1-|\alpha-\beta|}{n},\frac{1}{p_1}]=[\frac{1}{p_1}-\frac{s_1-|\alpha-\beta|}{n},\frac{1}{p_1}].
\end{equation*}
By assumption $\frac{1}{p_1}\geq \frac{1}{p}-\frac{1}{p_2}$ and
also (exactly the same as the previous item)
$\frac{1}{p_1}-\frac{s_1-|\alpha-\beta|}{n}\leq
\frac{1}{p}-\frac{1}{p_2}+\frac{s_2-|\beta|}{n}$. Consequently
\begin{equation*}
[\frac{1}{p_1}-\frac{s_1-|\alpha-\beta|}{n},\frac{1}{p_1}]\cap
[\frac{1}{p}-\frac{1}{p_2},\frac{1}{p}-\frac{1}{p_2}+\frac{s_2-|\beta|}{n}]\neq
\emptyset
\end{equation*}
\end{enumerate}
\textbf{Case 2:} $s_i-s\geq n(\frac{1}{p_i}-\frac{1}{p})$
$(i=1,2)$ and $s_1+s_2-s >
n(\frac{1}{p_1}+\frac{1}{p_2}-\frac{1}{p})\geq 0$.

If $s_i-s>n(\frac{1}{p_i}-\frac{1}{p})$ ($i=1,2$), then the proof
of previous case works. So we just need to consider the following
cases:
\begin{enumerate}[(i)]
\item \textbf{$s_1-s=n(\frac{1}{p_1}-\frac{1}{p})$, $s_2-s\neq n(\frac{1}{p_2}-\frac{1}{p})$:} If $|\alpha-\beta|<s$, then the proof of \textbf{Case 1} works. In fact,
note that the proof of \textbf{Case 1} was based on the
inequalities
$\frac{1}{p_1}-\frac{s_1-|\alpha-\beta|}{n}<\frac{1}{p}$ and $
\frac{1}{p}-\frac{1}{p_2}+\frac{s_2-|\beta|}{n}>0$
((\ref{eqngood1}) and (\ref{eqngood2})) and both inequalities hold
true in this case: the second inequality is true because as in
\textbf{Case 1} $s_2-s> n(\frac{1}{p_2}-\frac{1}{p})$, and the
first inequality is true because
\begin{align*}
& \frac{1}{p_1}-\frac{s_1-|\alpha-\beta|}{n}<
\frac{1}{p_1}-\frac{s_1-s}{n}\leq\frac{1}{p}.
\end{align*}
So we may assume $|\alpha-\beta|=s$. Since $|\alpha|\leq s$ and
$\beta\leq \alpha$, this is possible only if
  $|\alpha|=s$ and $|\beta|=0$.

By assumption $s_1+s_2-s>
n(\frac{1}{p_1}+\frac{1}{p_2}-\frac{1}{p})$, so
$s_2>\frac{n}{p_2}$. Also $s_1-s\geq 0$ and therefore $p_1\leq p$.
Consequently
\begin{equation*}
W^{s_1-s,p_1}(\mathbb{R}^n)\hookrightarrow L^p(\mathbb{R}^n),\quad
W^{s_2,p_2}(\mathbb{R}^n)\hookrightarrow L^{\infty}(\mathbb{R}^n).
\end{equation*}
That is, (\ref{eqnholderembed1}) is satisfied with $r=p$ and
$q=\infty$. (Note that $|\alpha-\beta|=s$ and $|\beta|=0$)
\item \textbf{$s_2-s=n(\frac{1}{p_2}-\frac{1}{p})$, $s_1-s\neq n(\frac{1}{p_1}-\frac{1}{p})$:} If $|\beta|<s$, then the proof of \textbf{Case 1} works (again because inequalities
$\frac{1}{p_1}-\frac{s_1-|\alpha-\beta|}{n}<\frac{1}{p}$ and $
\frac{1}{p}-\frac{1}{p_2}+\frac{s_2-|\beta|}{n}>0$ hold true). So
 we may assume $|\beta|=s$. Since $|\alpha|\leq s$ and $\beta\leq \alpha$, this is possible only if
  $|\alpha|=s$ and $\beta=\alpha$. Exactly similar to [(i)], one can
  show that $r=\infty$ and $q=p$ satisfy (\ref{eqnholderembed1}).
\item \textbf{$s_1-s=n(\frac{1}{p_1}-\frac{1}{p})$, $s_2-s=
n(\frac{1}{p_2}-\frac{1}{p})$:} If $|\alpha-\beta|<s$,
$|\beta|<s$, then the proof of \textbf{Case 1} works. If
$|\alpha-\beta|=s$ and $|\beta|<s$, then the argument given in
item [(i)] works. If $|\alpha-\beta|<s$ and $|\beta|=s$, then the
argument given in item [(ii)] works. Also note that, since
$|\alpha|\leq s$ and $\beta\leq \alpha$, it is not possible to
have $|\alpha-\beta|=|\beta|=s$.
\end{enumerate}
So we proved $\|
 uv\|_{s,p}\preceq \| u\|_{s_1,p_1}\| v\|_{s_2,p_2}$ for $v\in C_c^{\infty}(\mathbb{R}^n)$ and
$u\in W^{s_1,p_1}(\mathbb{R}^n)$. Now suppose $v$ is an arbitrary
element of $W^{s_2,p_2}(\mathbb{R}^n)$. There exists a sequence
$v_j \in C_c^{\infty}(\mathbb{R}^n)$ such that $v_j \rightarrow v$
in $W^{s_2,p_2}(\mathbb{R}^n)$. We have
\begin{equation*}
\| u v_j - u v_{j'} \|_{s,p}\preceq \|
v_j-v_{j'}\|_{s_2,p_2}\| u\|_{s_1,p_1}
\end{equation*}
Therefore $u v_j$ is a Cauchy sequence in $W^{s,p}(\mathbb{R}^n)$
 and so $u v_j$ converges to an element $w\in W^{s,p}(\mathbb{R}^n)$.
 Since
 $W^{s,p}(\mathbb{R}^n)\hookrightarrow L^{p}(\mathbb{R}^n)$, $u v_j  \rightarrow
 w$ in $L^{p}(\mathbb{R}^n)$. Hence, there exists a subsequence $u \tilde{v}_j
 $ that converges to $w$ almost everywhere. On the other hand,
\begin{align*}
\tilde{v}_j\rightarrow v \quad \textrm{in
$W^{s_2,p_2}(\mathbb{R}^n)$}&\Longrightarrow \tilde{v}_j
\rightarrow v \quad \textrm{in
$L^{p_2}(\mathbb{R}^n)$}\\&\Longrightarrow \exists\, \textrm{a
subsequence $\tilde{\tilde{v}}_j$ such that}\,
\tilde{\tilde{v}}_j\rightarrow v \,\, a.e.
\end{align*}
Consequently $u \tilde{\tilde{v}}_j  \rightarrow uv \, a.e.$ and
$u \tilde{\tilde{v}}_j \rightarrow w\, a.e.$, and so $uv=w\,\,
a.e.$ as well. Therefore, $uv\in W^{s,p}(\mathbb{R}^n)$ and
\begin{align*}
\| uv\|_{s,p}= \| \lim_{j\rightarrow
\infty}(u v_j )\|_{s,p}=\lim_{j\rightarrow \infty}\|
 (u v_j )\|_{s,p}&\preceq \lim_{j\rightarrow \infty}
 \| v_j\|_{s_2,p_2}\|
 u\|_{s_1,p_1}\\
 &=\| v\|_{s_2,p_2}\|
 u\|_{s_1,p_1}.
\end{align*}
\end{proof}

\begin{corollary}
Using extension operators, one can easily show that the above
result holds also for Sobolev spaces on any bounded domain with
Lipschitz continuous boundary. Indeed, if $P_1:
W^{s_1,p_1}(\Omega)\rightarrow W^{s_1,p_1}(\reals^n)$ and $P_2:
W^{s_2,p_2}(\Omega)\rightarrow W^{s_2,p_2}(\reals^n)$ are
extension operators, then $(P_1 u)(P_2 v)|_{\Omega}=uv$ and
therefore
\begin{align*}
\| uv\|_{W^{s,p}(\Omega)}\leq \| (P_1 u)(P_2
v)\|_{W^{s,p}(\reals^n)} &\preceq \| P_1
u\|_{W^{s_1,p_1}(\reals^n)}\| P_2
v\|_{W^{s_2,p_2}(\reals^n)}\\
&\preceq \| u\|_{W^{s_1,p_1}(\Omega)}\|
v\|_{W^{s_2,p_2}(\Omega)}\,.
\end{align*}
\end{corollary}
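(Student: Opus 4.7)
The plan is to reduce the bounded-domain case directly to the $\mathbb{R}^n$ case via the extension operators provided by Theorem~\ref{thm3.1}, exploiting the fact that the bounded-domain Sobolev norm is defined as the infimum of norms of $\mathbb{R}^n$-extensions. This avoids having to redo any of the delicate Leibniz/H\"older/embedding bookkeeping from the proof of Theorem~\ref{thm4.6}; all the real work has already been done on $\mathbb{R}^n$.

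More concretely, given $u\in W^{s_1,p_1}(\Omega)$ and $v\in W^{s_2,p_2}(\Omega)$, I would first invoke Theorem~\ref{thm3.1} to obtain continuous extension operators $P_1:W^{s_1,p_1}(\Omega)\to W^{s_1,p_1}(\mathbb{R}^n)$ and $P_2:W^{s_2,p_2}(\Omega)\to W^{s_2,p_2}(\mathbb{R}^n)$ satisfying $(P_iu)|_\Omega=u$, $(P_iv)|_\Omega=v$, together with the bounds
\begin{equation*}
\|P_1u\|_{W^{s_1,p_1}(\mathbb{R}^n)}\preceq \|u\|_{W^{s_1,p_1}(\Omega)},\qquad \|P_2v\|_{W^{s_2,p_2}(\mathbb{R}^n)}\preceq \|v\|_{W^{s_2,p_2}(\Omega)}.
\end{equation*}
Next, since all hypotheses (i)--(iv) concern only the numerical parameters $s_i,p_i,s,p$ and are therefore inherited verbatim, I apply Theorem~\ref{thm4.6} to the pair $(P_1u,P_2v)$ on $\mathbb{R}^n$ to conclude that $(P_1u)(P_2v)\in W^{s,p}(\mathbb{R}^n)$ with
\begin{equation*}
\|(P_1u)(P_2v)\|_{W^{s,p}(\mathbb{R}^n)}\preceq \|P_1u\|_{W^{s_1,p_1}(\mathbb{R}^n)}\,\|P_2v\|_{W^{s_2,p_2}(\mathbb{R}^n)}.
\end{equation*}

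The final step uses the definition
$\|w\|_{W^{s,p}(\Omega)}=\inf\{\|W\|_{W^{s,p}(\mathbb{R}^n)}:W|_\Omega=w\}$. Since $(P_1u)(P_2v)$ restricts pointwise to $uv$ on $\Omega$, it is an admissible extension of $uv$, so $\|uv\|_{W^{s,p}(\Omega)}\leq\|(P_1u)(P_2v)\|_{W^{s,p}(\mathbb{R}^n)}$. Chaining this with the two inequalities above yields the claim. I do not anticipate any real obstacle: the only subtlety is to note that one does not need the extensions to satisfy $(P_1u)(P_2v)=P(uv)$ for any canonical extension $P$ of the product; one only needs that the product of the extensions is \emph{some} extension of $uv$, which follows trivially from the restriction identities. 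The argument is therefore essentially a one-line chain of continuous maps, with Theorem~\ref{thm4.6} doing all of the genuine analytic work.
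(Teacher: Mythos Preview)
Your proposal is correct and follows exactly the argument the paper gives: extend via Theorem~\ref{thm3.1}, apply Theorem~\ref{thm4.6} on $\mathbb{R}^n$, and use the infimum definition of $\|\cdot\|_{W^{s,p}(\Omega)}$ to pass back to $\Omega$. The only difference is that you spell out more carefully why $(P_1u)(P_2v)$ being \emph{some} extension of $uv$ suffices, which is a helpful clarification but not a different idea.
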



\section{Sufficient Conditions for $W^{s_1,p_1}\times W^{s_2,p_2}\hookrightarrow
W^{s,p}$,
         $s\geq 0$, $s \in \mathbb{R}$}
\label{sec:positives}

As noted earlier in Remark~\ref{rem:1} just following
Theorem~\ref{thm4.6}, on that theorem $p_i$ was not required to
be less than or equal to $p$. It is the restriction that $s$ be
an integer in Theorem~\ref{thm4.6} that makes it possible to
remove the ordering between $p_i$ and $p$. We see in
Theorem~\ref{thm4.2} below that alternatively, one can restrict
consideration to a bounded domain $\Omega$ in place of
$\mathbb{R}^n$, allowing $s$ to be noninteger, yet still removing
the ordering restriction between $p_i$ and $p$. First we consider
the case of unbounded domains and real exponents, with the
ordering restriction between $p_i$ and $p$. It is worth
mentioning that, as opposed to the proofs of the similar results
in the literature which are based on Littlewood-Paley theory and
Besov spaces, the proofs presented here are based on
interpolation theory and embedding theorems without any reference
to Littlewood-Paley theory.

Before proceeding any further, first we need to state two lemmas:
\begin{lemma}\lab{lemessential1}
Let $\Omega$ be a bounded open subset of $\mathbb{R}^n$ with
Lipschitz continuous boundary, or $\Omega=\mathbb{R}^n$.
\begin{align*}
& \forall \, \epsilon>0, \quad \forall \, m\in
[0,\frac{n}{p}]\cap\mathbb{Z}, \quad W^{m,p}(\Omega)\times
W^{\frac{n}{p}+\epsilon,p}(\Omega)\hookrightarrow
W^{m,p}(\Omega).\\
&\forall \, \epsilon>0, \quad \forall \, m\in
[0,\frac{n}{p}]\cap\mathbb{Z}, \quad
W^{\frac{n}{p}+\epsilon,p}(\Omega)\times
W^{m,p}(\Omega)\hookrightarrow W^{m,p}(\Omega).
\end{align*}
\end{lemma}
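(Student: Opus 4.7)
The plan is to reduce both embeddings to direct applications of Theorem~\ref{thm4.6} (which treats multiplication when the target space has integer smoothness index) together with its Corollary, which via the extension operator of Theorem~\ref{thm3.1} transfers the estimate on $\mathbb{R}^n$ to any bounded Lipschitz domain $\Omega$. Since the two displayed statements are symmetric under swapping the two factors, it suffices to establish the first.

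For the first embedding I would instantiate Theorem~\ref{thm4.6} with the choice
\[
s_1 = s = m, \qquad s_2 = \tfrac{n}{p}+\epsilon, \qquad p_1 = p_2 = p,
\]
and then verify the four numerical hypotheses in turn. Condition (i) reduces to $m \geq m \geq 0$ and $\tfrac{n}{p}+\epsilon \geq \tfrac{n}{p} \geq m$, where both inequalities use the hypothesis $m \in [0,\tfrac{n}{p}]\cap\mathbb{Z}$; condition (ii) is immediate since $m \in \mathbb{N}_0$; condition (iii) becomes $0 \geq 0$ and $\tfrac{n}{p}+\epsilon - m \geq \epsilon > 0$ (with the right-hand side $n(\tfrac{1}{p_i}-\tfrac{1}{p}) = 0$ in both cases); and condition (iv) becomes $\tfrac{n}{p}+\epsilon > \tfrac{n}{p} \geq 0$, which is precisely where the strict positivity $\epsilon > 0$ enters. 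Theorem~\ref{thm4.6} then delivers the continuous bilinear map on $\mathbb{R}^n$, and its Corollary delivers the bounded-Lipschitz case on $\Omega$. The second embedding follows by relabelling $(s_1,p_1)\leftrightarrow(s_2,p_2)$ and repeating the identical verification.

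I do not anticipate a serious obstacle: this lemma is essentially a calibrated specialization of Theorem~\ref{thm4.6}, playing for Sobolev-Slobodeckij spaces the role that Lemma~\ref{lem4.1} played for Bessel potential spaces. Should one prefer a self-contained argument that does not appeal to Theorem~\ref{thm4.6}, one could instead mirror the proof of Lemma~\ref{lem4.1}: use that $W^{n/p+\epsilon,p}(\Omega)$ is a Banach algebra by Theorem~\ref{thm3.3}(i), use that $W^{n/p+\epsilon,p}(\Omega) \hookrightarrow L^{\infty}(\Omega)$ to get $W^{n/p+\epsilon,p} \times L^{p} \hookrightarrow L^{p}$, and then handle an intermediate integer $m$ directly by expanding $\partial^{\alpha}(uv)$ via the Leibniz rule for $|\alpha| \leq m$ and pairing factors through Hölder's inequality together with the Sobolev embeddings of Theorems~\ref{thm3.2} and~\ref{thm3.4}. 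Either route works; invoking Theorem~\ref{thm4.6} is shortest because it absorbs all of the Hölder-and-embedding bookkeeping into a single already-established statement.
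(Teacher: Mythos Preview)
Your proposal is correct and matches the paper's approach exactly: the paper's proof consists of the single sentence ``This is a direct consequence of the previous theorem,'' where the previous theorem is precisely Theorem~\ref{thm4.6}. You have simply made explicit the parameter choices and hypothesis verifications that the paper leaves to the reader.
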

\begin{proof}{\bf (Lemma~\ref{lemessential1})}
This is a direct consequence of the previous theorem.
\end{proof}
\begin{lemma}\lab{lemessential2}
Let $\Omega$ be a bounded open subset of $\mathbb{R}^n$ with
Lipschitz continuous boundary, or $\Omega=\mathbb{R}^n$.
\begin{align*}
& \forall \, \epsilon>0, \quad \forall \, s\in [0,\frac{n}{p}],
\quad W^{s,p}(\Omega)\times
W^{\frac{n}{p}+\epsilon,p}(\Omega)\hookrightarrow
W^{s,p}(\Omega).\\
&\forall \, \epsilon>0, \quad \forall \, s\in [0,\frac{n}{p}],
\quad W^{\frac{n}{p}+\epsilon,p}(\Omega)\times
W^{s,p}(\Omega)\hookrightarrow W^{s,p}(\Omega).
\end{align*}
\end{lemma}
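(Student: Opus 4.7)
The plan is to reduce to Lemma \ref{lemessential1} via real interpolation, with $W^{n/p+\epsilon,p}(\mathbb{R}^n)$ itself playing the role of an algebra endpoint. By symmetry it suffices to prove the first embedding, and by the extension operator of Theorem \ref{thm3.1} (exactly as in the corollary following Theorem \ref{thm4.6}) the bounded-Lipschitz case reduces to $\Omega = \mathbb{R}^n$. If $s$ is an integer in $[0, n/p]$ the statement is already Lemma \ref{lemessential1}, so from here on I assume $s \in [0, n/p] \setminus \mathbb{Z}$.

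Fix $g \in W^{n/p+\epsilon, p}(\mathbb{R}^n)$ and let $M_g f := fg$. Set $m := \lfloor s \rfloor$ and $t := n/p + \epsilon$. Since $s \leq n/p$ and $s \notin \mathbb{Z}$, we have $m \in [0, n/p] \cap \mathbb{Z}$, so Lemma \ref{lemessential1} gives boundedness of $M_g : W^{m,p}(\mathbb{R}^n) \to W^{m,p}(\mathbb{R}^n)$. Since $t > n/p$, Theorem \ref{thm3.3}(i) makes $W^{t,p}(\mathbb{R}^n)$ a Banach algebra, which yields boundedness of $M_g : W^{t,p}(\mathbb{R}^n) \to W^{t,p}(\mathbb{R}^n)$. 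In both cases the operator norm is $\preceq \|g\|_{W^{n/p+\epsilon,p}}$.

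Now $m < s < t$, so set $\theta := (t-s)/(t-m) \in (0,1)$, which gives $s = (1-\theta) t + \theta m$. The second bullet of Theorem \ref{thm3.01} applies (with $s_0 = t$, $s_1 = m \in \mathbb{Z}$, $s \notin \mathbb{Z}$, and $p_0 = p_1 = p$) and yields the identification
\[
(W^{t,p}(\mathbb{R}^n),\, W^{m,p}(\mathbb{R}^n))_{\theta, p} = W^{s,p}(\mathbb{R}^n).
\]
Standard real interpolation of a linear operator between bounded endpoints then produces $M_g : W^{s,p}(\mathbb{R}^n) \to W^{s,p}(\mathbb{R}^n)$ continuously, with norm $\preceq \|g\|_{W^{n/p+\epsilon,p}}$, which is precisely the first embedding. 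The second is obtained symmetrically.

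The one delicate point is the application of Theorem \ref{thm3.01}: its second bullet requires the integer exponent to sit in the \emph{second} slot of the couple, so one must interpolate with the ordered pair $(W^{t,p}, W^{m,p})$ rather than the reverse. With this ordering $s_1 = m$ satisfies the integer hypothesis, $s_0 = t$ is unconstrained, and the non-integrality of $s$ is precisely what the preliminary case reduction has arranged.
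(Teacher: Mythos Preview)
Your proof is correct and shares the paper's underlying idea---real interpolation between an integer-regularity endpoint and the Banach-algebra endpoint $W^{n/p+\epsilon,p}$---but the execution differs in a useful way. The paper applies the \emph{bilinear} real-interpolation theorem (Theorem~\ref{thm3.03}) to the multiplication map itself, taking $W^{0,p}$ as the low endpoint and choosing the auxiliary exponent $p_1=1$ so that the constraint $\tfrac{1}{r}=\tfrac{1}{p_1}+\tfrac{1}{p}-1$ forces $r=p$; it also relies on the identification $(W^{n/p+\epsilon,p},W^{n/p+\epsilon,p})_{\theta,1}=W^{n/p+\epsilon,p}$ for the first factor. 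You instead freeze $g$, regard multiplication as the \emph{linear} map $M_g$, and interpolate between $W^{\lfloor s\rfloor,p}$ (supplied by Lemma~\ref{lemessential1}) and the algebra $W^{n/p+\epsilon,p}$; the ordinary interpolation inequality for linear operators then yields the bilinear estimate with the right dependence on $\|g\|_{n/p+\epsilon,p}$. Your route avoids the $p_1=1$ device and the trivial-couple identification, at the minor cost of invoking Lemma~\ref{lemessential1} for a general integer $m=\lfloor s\rfloor$ rather than only $m=0$. Your care with the ordering of the couple in Theorem~\ref{thm3.01} is well placed; the paper's own proof writes the pair in the reverse order and silently uses the symmetry $(A_0,A_1)_{\theta,p}=(A_1,A_0)_{1-\theta,p}$.
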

\begin{proof}{\bf (Lemma~\ref{lemessential2})}
Clearly we just need to prove the first statement. Let
$\epsilon>0$ and $s\in [0,\frac{n}{p}]$ be given. By Lemma
\ref{lemessential1} if $s\in \mathbb{Z}$ the claim holds true. So
we may assume $s\not \in \mathbb{Z}$. Since
$\frac{n}{p}+\epsilon>\frac{n}{p}$,
$W^{\frac{n}{p}+\epsilon,p}(\Omega)$ is an algebra and
\begin{equation}\lab{eqnsobolevalgebra12}
W^{\frac{n}{p}+\epsilon,p}(\Omega)\times
W^{\frac{n}{p}+\epsilon,p}(\Omega)\hookrightarrow
W^{\frac{n}{p}+\epsilon,p}(\Omega).
\end{equation}
Also $W^{\frac{n}{p}+\epsilon,p}(\Omega)\hookrightarrow
L^{\infty}(\Omega)$. Hence
\begin{equation}\lab{eqnlebesgue2}
W^{\frac{n}{p}+\epsilon,p}(\Omega)\times
W^{0,p}(\Omega)\hookrightarrow W^{0,p}(\Omega).\quad
(L^{\infty}\times L^{p}\hookrightarrow L^{p})
\end{equation}
Let $\theta=\frac{s}{\frac{n}{p}+\epsilon}$; clearly
$0<\theta<1$. Let $p_1=1$ (so if we let
$\frac{1}{r}=\frac{1}{p_1}+\frac{1}{p}-1$, then $r=p$). We want
to use real interpolation between (\ref{eqnsobolevalgebra12}) and
(\ref{eqnlebesgue2}). By Theorem \ref{thm3.03} we have
\begin{equation*}
(W^{\frac{n}{p}+\epsilon,p}(\Omega),
W^{\frac{n}{p}+\epsilon,p}(\Omega))_{\theta,p_1}\times
(W^{0,p}(\Omega),
W^{\frac{n}{p}+\epsilon,p}(\Omega))_{\theta,p}\hookrightarrow
(W^{0,p}(\Omega), W^{\frac{n}{p}+\epsilon,p}(\Omega))_{\theta,r}.
\end{equation*}
By Theorem \ref{thm3.01} we have
\begin{equation*}
(W^{\frac{n}{p}+\epsilon,p}(\Omega),
W^{\frac{n}{p}+\epsilon,p}(\Omega))_{\theta,p_1}=W^{\frac{n}{p}+\epsilon,p},\quad
(W^{0,p}(\Omega),
W^{\frac{n}{p}+\epsilon,p}(\Omega))_{\theta,p}=W^{s,p}.\quad
(s\not \in \mathbb{Z})
\end{equation*}
Hence
\begin{equation*}
W^{\frac{n}{p}+\epsilon,p}(\Omega)\times
W^{s,p}(\Omega)\hookrightarrow W^{s,p}(\Omega).
\end{equation*}
\end{proof}
\begin{theorem}[Multiplication theorem for Sobolev spaces on the whole space, nonnegative exponents]\lab{thm4.1}
Assume $s_i,s$ and $1 \leq p_i \leq p< \infty$ ($i=1,2$) are real
numbers satisfying
\begin{enumerate}[(i)]
\item  $s_i \geq s$ 
\item  $s\geq 0$,
\item  $s_i-s\geq n(\dfrac{1}{p_i}-\dfrac{1}{p})$,
\item  $s_1+s_2-s>n(\dfrac{1}{p_1}+\dfrac{1}{p_2}-\dfrac{1}{p})$.
\end{enumerate}
Claim: If $u\in W^{s_1,p_1}(\mathbb{R}^n)$ and $v\in
W^{s_2,p_2}(\mathbb{R}^n)$, then $uv \in W^{s,p}(\mathbb{R}^n)$
and moreover the pointwise multiplication of functions is a
continuous bilinear map
\begin{equation*}
W^{s_1,p_1}(\mathbb{R}^n)\times
W^{s_2,p_2}(\mathbb{R}^n)\rightarrow W^{s,p}(\mathbb{R}^n).
\end{equation*}
\end{theorem}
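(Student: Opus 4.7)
The plan is to mirror the two-step architecture of the proof of Theorem~\ref{thm4.0}, using Lemma~\ref{lemessential2} in place of Lemma~\ref{lem4.1}. Since the case $s\in\mathbb{N}_0$ is already covered by Theorem~\ref{thm4.6}, I assume throughout that $s\notin\mathbb{Z}$.

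\textbf{Step 2 (reduction to the diagonal $p_1=p_2=p$).} I introduce $\tilde s_i := s_i - n/p_i + n/p$. The hypotheses $p_i\le p$ and $s_i-s\ge n(1/p_i-1/p)$, together with Theorem~\ref{thm3.2}, give the continuous embeddings $W^{s_i,p_i}(\mathbb{R}^n)\hookrightarrow W^{\tilde s_i,p}(\mathbb{R}^n)$. The triple $(\tilde s_1,\tilde s_2,s)$ satisfies the diagonal hypotheses $\tilde s_i\ge s$ and $\tilde s_1+\tilde s_2-s>n/p$, verified exactly as in the proof of Theorem~\ref{thm4.0}, so it suffices to prove the diagonal case.

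\textbf{Step 1 (diagonal case $p_1=p_2=p$).} If $s>n/p$, then $W^{s,p}(\mathbb{R}^n)$ is a Banach algebra by Theorem~\ref{thm3.3}, and the embeddings $W^{s_i,p}\hookrightarrow W^{s,p}$ (Theorem~\ref{thm3.2}, using $s_i\ge s$) finish the argument. If instead $0\le s\le n/p$, set $\epsilon := s_1+s_2-s-n/p>0$, adjusted slightly if necessary so that $n/p+\epsilon\notin\mathbb{Z}$. Lemma~\ref{lemessential2} supplies the two anchor inclusions
\begin{equation*}
W^{s,p}\times W^{n/p+\epsilon,p}\hookrightarrow W^{s,p},\qquad W^{n/p+\epsilon,p}\times W^{s,p}\hookrightarrow W^{s,p}.
\end{equation*}
Since $s\le s_2$ forces $s_1\le n/p+\epsilon$, there is $\theta\in[0,1]$ with $s_1=(1-\theta)s+\theta(n/p+\epsilon)$, and the identity $s_1+s_2=s+n/p+\epsilon$ then forces $s_2=(1-\theta)(n/p+\epsilon)+\theta s$. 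I apply Theorem~\ref{thm3.03} to interpolate both factors in these two anchor inclusions; the target collapses to $W^{s,p}$ since both endpoints coincide.

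\textbf{Anticipated obstacle.} The technical heart is identifying the interpolated factor spaces with $W^{s_1,p}$ and $W^{s_2,p}$, which, unlike the Bessel case treated in Theorem~\ref{thm4.0}, is not uniform in $p$. I split on $p$. For $p\ge 2$ I use the complex version of Theorem~\ref{thm3.03} together with the embedding $W^{s_i,p}\hookrightarrow[W^{s,p},W^{n/p+\epsilon,p}]_\theta$ provided by Theorem~\ref{thm3.02}, valid because $p\ge 2$ and both endpoints are non-integers. For $p<2$ I use real interpolation with parameter $p$ in each factor: the bilinear constraint $1/r=2/p-1\ge 0$ of Theorem~\ref{thm3.03} is met, and Theorem~\ref{thm3.01} gives $(W^{s,p},W^{n/p+\epsilon,p})_{\theta,p}=W^{s_i,p}$ when $s_i\notin\mathbb{Z}$. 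The residual subcase in which some $s_i$ happens to be an integer is handled by perturbing the base pair: replace $(s, n/p+\epsilon)$ by nearby non-integer exponents $(s+\delta_0, n/p+\epsilon+\delta_1)$ (with $\delta_0,\delta_1$ small and of appropriate sign so that the anchor inclusions of Lemma~\ref{lemessential2} persist, perhaps with slightly worse constants) chosen so that the interpolated exponents land on $s_i\notin\mathbb{Z}$, at which point Theorem~\ref{thm3.01} applies cleanly.
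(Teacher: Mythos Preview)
Your two–step architecture (reduce to the diagonal $p_1=p_2=p$ via Theorem~\ref{thm3.2}, then interpolate between the two anchor inclusions coming from Lemma~\ref{lemessential2}) is exactly the paper's strategy, and the split into real interpolation for $p<2$ and complex interpolation for $p\ge 2$ also matches.

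The flaw is in how you dispose of the integrality obstructions. You write ``set $\epsilon:=s_1+s_2-s-n/p$, adjusted slightly if necessary so that $n/p+\epsilon\notin\mathbb Z$'' and then immediately invoke the identity $s_1+s_2=s+n/p+\epsilon$ to force $s_2=(1-\theta)(n/p+\epsilon)+\theta s$. These two statements are incompatible: once you move $\epsilon$ the identity fails, so the second interpolated exponent is no longer $s_2$ but some $s_2'\neq s_2$; if $\epsilon$ is increased then $s_2'>s_2$ and $W^{s_2,p}$ does \emph{not} embed into $W^{s_2',p}$, so the argument breaks. The later ``perturb the base pair $(s,n/p+\epsilon)$'' proposal for the $p<2$ case has the same defect and is too vague to verify.

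The clean fix, which is what the paper does, is to leave the anchors alone and instead perturb one of the $s_i$ downward. For $p\ge2$ with $s_1+s_2-s\in\mathbb Z$, replace $s_1$ by $\tilde s_1:=s_1-\delta$ with $\delta$ small enough that $\tilde s_1\ge s$, $\tilde s_1+s_2-s>n/p$, and $\tilde s_1+s_2-s\notin\mathbb Z$; the clean case yields $W^{\tilde s_1,p}\times W^{s_2,p}\hookrightarrow W^{s,p}$, and $W^{s_1,p}\hookrightarrow W^{\tilde s_1,p}$ finishes. For $p<2$ with, say, $s_1\in\mathbb Z$, the same trick (decrease $s_1$ slightly to a noninteger $\tilde s_1$) reduces to the already–handled noninteger case; the boundary situation $s_1=s$ cannot occur since you assumed $s\notin\mathbb Z$. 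Perturbing the \emph{input} exponents rather than the anchors is what preserves the needed inequalities and embeddings simultaneously.
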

\begin{proof}{\bf (Theorem~\ref{thm4.1})}
First we consider the special case where $p_1=p_2=p$ and then we
will prove the general case.
\begin{itemizeXALI}
\item \textbf{Step 1 $p_1=p_2=p$:} In this case the assumptions can be
rewritten as follows:
\begin{equation*}
s_1,s_2\geq s \geq 0, \quad s_1+s_2-s>\frac{n}{p}.
\end{equation*}
\begin{itemizeXXALI}
\item \textbf{Case 1 $s>\frac{n}{p}$:} If $s>\frac{n}{p}$, then
 $W^{s,p}(\mathbb{R}^n)$is an algebra and therefore we can write
\begin{align*}
W^{s_1,p_1}(\mathbb{R}^n) \times
W^{s_2,p_2}(\mathbb{R}^n)&\hookrightarrow
W^{s,p}(\mathbb{R}^n)\times
W^{s,p}(\mathbb{R}^n)\quad (\textrm{by assumption $s_1,s_2\geq s$})\\
&\hookrightarrow W^{s,p}(\mathbb{R}^n).
\end{align*}
\item \textbf{Case 2 $s\leq \frac{n}{p}$:} By Lemma
\ref{lemessential2} for all $\epsilon>0$
\begin{align*}
&W^{s,p}(\mathbb{R}^n)\times
W^{\frac{n}{p}+\epsilon,p}(\mathbb{R}^n)\hookrightarrow
W^{s,p}(\mathbb{R}^n),\\
&W^{\frac{n}{p}+\epsilon,p}(\mathbb{R}^n) \times
W^{s,p}(\mathbb{R}^n)\hookrightarrow W^{s,p}(\mathbb{R}^n).
\end{align*}
In particular for $\epsilon=s_1+s_2-s-\frac{n}{p}>0$ we have
\begin{align*}
&W^{s,p}(\mathbb{R}^n)\times
W^{s_1+s_2-s,p}(\mathbb{R}^n)\hookrightarrow
W^{s,p}(\mathbb{R}^n),\\
&W^{s_1+s_2-s,p}(\mathbb{R}^n) \times
W^{s,p}(\mathbb{R}^n)\hookrightarrow W^{s,p}(\mathbb{R}^n).
\end{align*}
We may consider the following cases:
\begin{enumerate}[(i)]
\item \textbf{$p< 2, s_1,s_2\not \in \mathbb{Z}$:} Let $\frac{1}{r}=\frac{1}{p}+\frac{1}{p}-1>0$.
Let $\theta$ be such that $(1-\theta)s+\theta(s_1+s_2-s)=s_1$. As
it was discussed in the proof of Theorem \ref{thm4.0}, for this
$\theta$, $(1-\theta)(s_1+s_2-s)+\theta s=s_2$. By Theorem
\ref{thm3.03} we have
\begin{align*}
(W^{s,p}(\mathbb{R}^n),W^{s_1+s_2-s,p}(\mathbb{R}^n))_{\theta,p}\times
(W^{s_1+s_2-s,p}(\mathbb{R}^n) &
,W^{s,p}(\mathbb{R}^n))_{\theta,p}\\
&\hookrightarrow
(W^{s,p}(\mathbb{R}^n),W^{s,p}(\mathbb{R}^n))_{\theta,r}.
\end{align*}
Consequently, since $s_1\not \in \mathbb{Z}$ and $s_2\not \in
\mathbb{Z}$,
\begin{equation*}
W^{s_1,p}(\mathbb{R}^n)\times
W^{s_2,p}(\mathbb{R}^n)\hookrightarrow W^{s,p}(\mathbb{R}^n).
\end{equation*}
\item \textbf{$p< 2, s_1\in \mathbb{Z},\, s_2\not \in \mathbb{Z}$:}
If $s_1=s$, then from $s_1+s_2-s>\frac{n}{p}$ it follows that
$s_2>\frac{n}{p}$. So in this case the claim reduces to what was
proved in Lemma \ref{lemessential2}. If $s_1\neq s$, let
$\tilde{s}_1=s_1-\epsilon$ where
\begin{equation*}
\epsilon=
\frac{1}{2}\min(s_1-\floor{s_1},s_1-s,s_1+s_2-s-\frac{n}{p})>0.
\end{equation*}
Clearly,
\begin{equation*}
\tilde{s}_1\not \in \mathbb{Z}, \quad \tilde{s}_1\geq s,\quad
s_2\geq s, \quad \tilde{s}_1+s_2-s>\frac{n}{p}.
\end{equation*}
Therefore, by what was proved in the previous case
\begin{equation*}
W^{\tilde{s}_1,p}\times W^{s_2,p}\hookrightarrow W^{s,p}.
\end{equation*}
Now the claim follows from the fact that
$W^{s_1,p}\hookrightarrow W^{\tilde{s}_1,p}$.
\item \textbf{$p< 2, s_1\not \in \mathbb{Z},\, s_2 \in \mathbb{Z}$:}
Just switch the roles of $s_1$ and $s_2$ in the previous case.
\item \textbf{$p< 2, s_1 \in \mathbb{Z},\, s_2 \in \mathbb{Z}$:} Note
that both of $s_1$ and $s_2$ cannot be equal to $s$ because
$s_1+s_2-s>\frac{n}{p}$ but $s\leq \frac{n}{p}$. Because of the
symmetry in the roles of $s_1$ and $s_2$, without loss of
generality we may assume that $s_1\neq s$. let
$\tilde{s}_1=s_1-\epsilon$ where
\begin{equation*}
\epsilon=
\frac{1}{2}\min(s_1-\floor{s_1},s_1-s,s_1+s_2-s-\frac{n}{p})>0\,.
\end{equation*}
Clearly
\begin{equation*}
\tilde{s}_1\not \in \mathbb{Z}, \quad \tilde{s}_1\geq s,\quad
s_2\geq s, \quad \tilde{s}_1+s_2-s>\frac{n}{p}\,.
\end{equation*}
and so the problem reduces to the previous case.

At this point we are done with the case $p<2$.
\item \textbf{$p\geq 2, s\not \in \mathbb{Z},\, s_1+s_2-s \not \in
\mathbb{Z}$:} This time we use complex interpolation. Define
$\theta$ as before. By Theorem \ref{thm3.03} we have
\begin{align*}
[W^{s,p}(\mathbb{R}^n),W^{s_1+s_2-s,p}(\mathbb{R}^n)]_{\theta}\times
[W^{s_1+s_2-s,p}(\mathbb{R}^n)&,W^{s,p}(\mathbb{R}^n)]_{\theta}\\&\hookrightarrow
[W^{s,p}(\mathbb{R}^n),W^{s,p}(\mathbb{R}^n)]_{\theta}\,.
\end{align*}
Since $s$ and $s_1+s_2-s$ are not integers and $p\geq 2$ (see
Theorem \ref{thm3.02}),
\begin{align*}
&W^{s_1,p}(\mathbb{R}^n)\hookrightarrow
[W^{s,p}(\mathbb{R}^n),W^{s_1+s_2-s,p}(\mathbb{R}^n)]_{\theta}\,,\\
&W^{s_2,p}(\mathbb{R}^n)\hookrightarrow
[W^{s_1+s_2-s,p}(\mathbb{R}^n),W^{s,p}(\mathbb{R}^n)]_{\theta}\,.
\end{align*}
Consequently
\begin{equation*}
W^{s_1,p}(\mathbb{R}^n)\times
W^{s_2,p}(\mathbb{R}^n)\hookrightarrow W^{s,p}(\mathbb{R}^n)\,.
\end{equation*}
\item \textbf{$p\geq 2, s\not \in \mathbb{Z},\, s_1+s_2-s \in
\mathbb{Z}$:} Both of $s_1$ and $s_2$ cannot be equal to $s$
because $s_1+s_2-s>\frac{n}{p}$ but $s\leq \frac{n}{p}$. Because
of the symmetry in the roles of $s_1$ and $s_2$, without loss of
generality we may assume that $s_1\neq s$. Let
$\tilde{s}_1=s_1-\epsilon$ where
\begin{equation*}
\epsilon=\frac{1}{2}\min(1,s_1-s,s_1+s_2-s-\frac{n}{p}).
\end{equation*}
Clearly
\begin{align*}
&\tilde{s}_1\geq s,\quad
\tilde{s}_1+s_2-s=s_1+s_2-s-\epsilon>\frac{n}{p},\\
& \tilde{s}_1+s_2-s=s_1+s_2-s-\epsilon \not \in \mathbb{Z}\quad
(\textrm{because $\epsilon\leq \frac{1}{2}$}).
\end{align*}
So by what was proved in the previous case we have
\begin{equation*}
W^{\tilde{s}_1,p}(\mathbb{R}^n)\times
W^{s_2,p}(\mathbb{R}^n)\hookrightarrow W^{s,p}(\mathbb{R}^n).
\end{equation*}
and since $W^{s_1,p}\hookrightarrow W^{\tilde{s}_1,p}$
\begin{equation*}
W^{s_1,p}(\mathbb{R}^n)\times
W^{s_2,p}(\mathbb{R}^n)\hookrightarrow W^{s,p}(\mathbb{R}^n).
\end{equation*}
\item \textbf{$p\geq 2, s \in \mathbb{Z},\, s_1+s_2-s \not \in
\mathbb{Z}$:} If $s_1=s$ or $s_2=s$, the claim follows from Lemma
\ref{lemessential2}. So we may assume $s_1, s_2> s$. Let
$\tilde{s}=s+\epsilon$ where
\begin{align*}
&\epsilon=\frac{1}{2}\min(s_1-s, s_2-s,
s_1+s_2-s-\floor{s_1+s_2-s}, s_1+s_2-s-\frac{n}{p})\\
&(\textrm{note that $s_1+s_2-s-\floor{s_1+s_2-s}<1$})
\end{align*}
Clearly $\tilde{s}$ and $s_1+s_2-\tilde{s}$ are not integers and
\begin{equation*}
s_1\geq \tilde{s},\quad s_2\geq \tilde{s},\quad
s_1+s_2-\tilde{s}>\frac{n}{p}.
\end{equation*}
So by what was proved in previous cases
\begin{equation*}
W^{s_1,p}(\mathbb{R}^n)\times
W^{s_2,p}(\mathbb{R}^n)\hookrightarrow
W^{\tilde{s},p}(\mathbb{R}^n)=W^{s+\epsilon,p}(\mathbb{R}^n)\hookrightarrow
W^{s,p}(\mathbb{R}^n) .
\end{equation*}
\item \textbf{$p\geq 2, s \in \mathbb{Z},\, s_1+s_2-s \in
\mathbb{Z}$:} If $s_1=s$ or $s_2=s$, the claim follows from Lemma
\ref{lemessential2}. So we may assume $s_1, s_2> s$. Let
$\tilde{s}=s+\epsilon$ where
\begin{equation*}
\epsilon=\frac{1}{2}(1,s_1-s,s_2-s,s_1+s_2-s-\frac{n}{p}).
\end{equation*}
We have $\epsilon\leq \frac{1}{2}$, so $\tilde{s}$ and
$s_1+s_2-\tilde{s}$ are not integers. Also clearly
\begin{equation*}
s_1\geq \tilde{s},\quad s_2\geq \tilde{s},\quad
s_1+s_2-\tilde{s}>\frac{n}{p}.
\end{equation*}
So by what was proved in previous cases
\begin{equation*}
W^{s_1,p}(\mathbb{R}^n)\times
W^{s_2,p}(\mathbb{R}^n)\hookrightarrow
W^{\tilde{s},p}(\mathbb{R}^n)=W^{s+\epsilon,p}(\mathbb{R}^n)\hookrightarrow
W^{s,p}(\mathbb{R}^n) .
\end{equation*}
\end{enumerate}
\end{itemizeXXALI}
\item \textbf{Step 2: General Case} This step is exactly the same as
step 2 in the proof of Theorem \ref{thm4.0}. We just need to
replace every occurrence of $H^{r,q}(\mathbb{R}^n)$ with
$W^{r,q}(\mathbb{R}^n)$.
\end{itemizeXALI}
\end{proof}
Proposition \ref{propholderwrong} shows that the claim of Theorem
\ref{thm4.1} does not necessarily hold if one removes the
assumption $p_i\leq p$. Of course, the next theorem shows that the
assumption $p_i\leq p$ is not necessary on bounded domains.


\begin{theorem}[Multiplication theorem for Sobolev spaces on bounded domains, nonnegative exponents]\lab{thm4.2}
Let $\Omega$ be a bounded domain in $\mathbb{R}^n$ with Lipschitz
continuous boundary. Assume $s_i,s$ and $1 \leq p_i, p< \infty$
($i=1,2$) are real numbers satisfying
\begin{enumerate}[(i)]
\item  $s_i \geq s$ 
\item  $s\geq 0$,
\item  $s_i-s\geq n(\dfrac{1}{p_i}-\dfrac{1}{p})$,
\item  $s_1+s_2-s>n(\dfrac{1}{p_1}+\dfrac{1}{p_2}-\dfrac{1}{p})$.
\end{enumerate}
In the case where $\max\{p_1, p_2\}>p$ instead of (iv)
assume that $s_1+s_2-s>\frac{n}{\min\{p_1,p_2\}}$ and that the inequalities in (i) and (iii) are strict.\\
Claim: If $u\in W^{s_1,p_1}(\Omega)$ and $v\in
W^{s_2,p_2}(\Omega)$, then $uv \in W^{s,p}(\Omega)$ and moreover
the pointwise multiplication of functions is a continuous
bilinear map
\begin{equation*}
W^{s_1,p_1}(\Omega)\times W^{s_2,p_2}(\Omega)\rightarrow
W^{s,p}(\Omega).
\end{equation*}
\end{theorem}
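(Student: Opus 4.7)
The plan is to reduce the theorem to Theorem~\ref{thm4.1} (the $\mathbb{R}^n$ version) using two tools: extension operators (Theorem~\ref{thm3.1}) to pass from $\Omega$ to $\mathbb{R}^n$, and the bounded-domain embedding Theorem~\ref{thm3.4}, which lets us trade integrability without the usual ordering restriction. The template is the one used in the corollary to Theorem~\ref{thm4.6}: given $u\in W^{s_1,p_1}(\Omega)$ and $v\in W^{s_2,p_2}(\Omega)$, take extensions $P_1u$ and $P_2v$, multiply them in $\mathbb{R}^n$, and restrict back. The only obstruction is that Theorem~\ref{thm4.1} requires $p_i\leq p$, which may fail here, so most of the work is finding, for each configuration of the $p_i$'s, a reduced tuple of exponents on $\Omega$ that satisfies the hypotheses of Theorem~\ref{thm4.1}.

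Assume without loss of generality $p_1\geq p_2$, and split into three cases. If $p_1\leq p$, then both $p_i\leq p$ and Theorem~\ref{thm4.1} applies directly to the extensions. If $p_1>p\geq p_2$, use Theorem~\ref{thm3.4} to embed $W^{s_1,p_1}(\Omega)\hookrightarrow W^{s_1,p}(\Omega)$ (the condition $s_1-n/p_1\geq s_1-n/p$ reduces to $p_1\geq p$), and then apply the first case with $\tilde p_1=p$. Checking the hypotheses of Theorem~\ref{thm4.1} for the reduced tuple is immediate: (iii) for $i=1$ becomes $s_1-s\geq 0$, which is (i); (iii) for $i=2$ is unchanged; and (iv) becomes $s_1+s_2-s>n(1/p+1/p_2-1/p)=n/p_2=n/\min(p_1,p_2)$, which is precisely the replacement hypothesis.

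The third case, $p_1\geq p_2>p$, is the main obstacle, because naively reducing both integrabilities down to $p$ would require $s_1+s_2-s>n/p$, which is strictly stronger than the hypothesis $s_1+s_2-s>n/p_2$. The key idea is to aim at an intermediate target: use Theorem~\ref{thm3.4} to embed $W^{s_1,p_1}(\Omega)\hookrightarrow W^{s_1,p_2}(\Omega)$, and then apply Theorem~\ref{thm4.1} via extensions with \emph{all three} of the relevant exponents set equal to $p_2$. The hypotheses there are immediate: (i) is given, (iii) becomes trivial since $\tilde p_i=p_2$ equals the intermediate target, and (iv) reads $s_1+s_2-s>n/p_2$, which is exactly what is assumed. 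This produces the product in $W^{s,p_2}(\Omega)$ with the correct norm bound, after which one last application of Theorem~\ref{thm3.4} yields $W^{s,p_2}(\Omega)\hookrightarrow W^{s,p}(\Omega)$ (since $p_2>p$), delivering the claim.

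The crux of the argument is recognizing in the third case that the correct intermediate target exponent is $\min(p_1,p_2)$ rather than $p$ itself; this matches the shape of the replacement hypothesis. Once this is seen, the remainder is bookkeeping: verifying the exponent inequalities required by Theorem~\ref{thm4.1} in each reduced configuration, and chaining together the norm estimates produced by the extension, multiplication, restriction, and final embedding, each of which is bounded by the product $\|u\|_{W^{s_1,p_1}(\Omega)}\|v\|_{W^{s_2,p_2}(\Omega)}$ up to a multiplicative constant.
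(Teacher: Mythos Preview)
Your proposal is correct and follows essentially the same three-case split as the paper: WLOG $p_1\geq p_2$, then (1) $p_1\leq p$ handled directly, (2) $p_1>p\geq p_2$ handled by replacing $p_1$ with $p$ via Theorem~\ref{thm3.4}, and (3) $p_1\geq p_2>p$ handled by first working entirely at integrability $p_2$ and then embedding $W^{s,p_2}(\Omega)\hookrightarrow W^{s,p}(\Omega)$. The only cosmetic difference is that you pass through extension operators and invoke Theorem~\ref{thm4.1} on $\mathbb{R}^n$, whereas the paper simply re-runs Step~1 of the proof of Theorem~\ref{thm4.1} directly on $\Omega$; both routes are valid and lead to the same case analysis and the same use of the replacement hypothesis $s_1+s_2-s>n/\min(p_1,p_2)$.
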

\begin{proof}{\bf (Theorem~\ref{thm4.2})}
\leavevmode
\begin{itemizeXALI}
\item \textbf{Step 1 $p_1=p_2=p$:} By the (exact) same proof as the one given in Step 1 of the proof of
Theorem \ref{thm4.1} we have
\begin{equation*}
W^{s_1,p}(\Omega)\times W^{s_2,p}(\Omega)\hookrightarrow
W^{s,p}(\Omega),
\end{equation*}
provided $s_1,s_2\geq s$ and $s_1+s_2-s>\frac{n}{p}$.
\item \textbf{Step 2:} Now we prove the
general case. Because of the symmetry in the roles of $p_1$ and
$p_2$ without loss of generality we may assume $p_2\leq p_1$. We
may consider three cases:
\begin{itemizeXXALI}
\item \textbf{Case 1 $p_2 \leq p_1 \leq p$:} The proof is exactly the
same as the one presented in Step 2 of Theorem \ref{thm4.1}.
\item \textbf{Case 2 $p_2 \leq p < p_1$:} Let $\tilde{p}_1=p$. Since, by assumption, $s_1+s_2-s>\frac{n}{p_2}$, we can
choose $\tilde{s}_1\in (s,s_1)$ such that $\tilde{s}_1+s_2-s>\frac{n}{p_2}$. Now one can easily check that the tuple $(\tilde{s}_1,s_2,s,\tilde{p}_1,p_2,p)$
also satisfies all the assumptions of the theorem. So
by what was proved in the previous case we have
\begin{equation*}
W^{\tilde{s}_1,\tilde{p_1}}(\Omega)\times
W^{s_2,p_2}(\Omega)\hookrightarrow W^{s,p}(\Omega).
\end{equation*}
By the third embedding theorem (Theorem \ref{thm3.4})
$W^{s_1,p_1}(\Omega)\hookrightarrow W^{\tilde{s}_1,\tilde{p_1}}(\Omega)$
(because $s_1-\tilde{s}_1>0> \frac{n}{p_1}-\frac{n}{\tilde{p}_1}$). Hence
\begin{equation*}
W^{s_1,p_1}(\Omega)\times W^{s_2,p_2}(\Omega)\hookrightarrow
W^{s,p}(\Omega).
\end{equation*}
\item \textbf{Case 3 $p<p_2\leq p_1$:} Let
\begin{equation*}
\epsilon=\frac{1}{6}\min\{s_1-s,s_2-s,s_1+s_2-s-\frac{n}{p_2}\}
\end{equation*}
and set $\hat{s}_1=s_1-\epsilon$, $\hat{s}_2=s_2-\epsilon$, and $\hat{s}=s+\epsilon$. Clearly $\hat{s}_1,\hat{s}_2\geq \hat{s}$. Moreover,
Since $s_1+s_2-s>\frac{n}{p_2}$, we have
\begin{equation*}
\hat{s}_1+\hat{s}_2-\hat{s}=s_1+s_2-s-3\epsilon> \frac{n}{p_2}
\end{equation*}
Thus, by what was proved in Step 1 we have
\begin{equation*}
W^{\hat{s}_1,p_2}(\Omega)\times W^{\hat{s}_2,p_2}(\Omega)\hookrightarrow
W^{\hat{s},p_2}(\Omega).
\end{equation*}
Now note that $p_1\geq p_2$ and $p_2>p$, so by the third
embedding theorem (Theorem \ref{thm3.4})
$W^{s_1,p_1}(\Omega)\hookrightarrow W^{\hat{s}_1,p_2}(\Omega)$ and
$W^{\hat{s},p_2}(\Omega)\hookrightarrow W^{s,p}(\Omega)$. Therefore
\begin{equation*}
W^{s_1,p_1}(\Omega)\times W^{s_2,p_2}(\Omega)\hookrightarrow
W^{s,p}(\Omega).
\end{equation*}
\end{itemizeXXALI}
\end{itemizeXALI}
\end{proof}


\section{Sufficient Conditions for $W^{s_1,p_1}\times W^{s_2,p_2}\hookrightarrow
W^{s,p}$,
         $s<0$, $s \in \mathbb{R}$}
\label{sec:negatives}
\begin{theorem}[Multiplication theorem for Sobolev spaces on the whole space, negative exponents I]\lab{thm4.3}
Assume $s_i, s$ and $1 < p_i \leq p < \infty$ ($i=1,2$) are real
numbers satisfying
\begin{enumerate}[(i)]
\item  $s_i \geq s$, 
\item  $\min\{s_1, s_2\}<0$,
\item  $s_i-s\geq n(\dfrac{1}{p_i}-\dfrac{1}{p})$,
\item  $s_1+s_2-s>n(\dfrac{1}{p_1}+\dfrac{1}{p_2}-\dfrac{1}{p})$.
\item $s_1+s_2\geq n(\dfrac{1}{p_1}+\dfrac{1}{p_2}-1)\geq 0$.
\end{enumerate}
Then the pointwise multiplication of functions extends uniquely
to a continuous bilinear map
\begin{equation*}
W^{s_1,p_1}(\mathbb{R}^n)\times
W^{s_2,p_2}(\mathbb{R}^n)\rightarrow W^{s,p}(\mathbb{R}^n).
\end{equation*}
\end{theorem}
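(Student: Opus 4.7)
The plan is to reduce to the nonnegative-smoothness multiplication theorem (Theorem \ref{thm4.1}) via a duality argument. First I would pin down the signs of the parameters: (i) gives $s\leq \min\{s_1,s_2\}$, so (ii) forces $s<0$; the clause $n(1/p_1+1/p_2-1)\geq 0$ built into (v) gives $s_1+s_2\geq 0$, which together with (ii) means exactly one of $s_1,s_2$ is negative. Since hypotheses (i)--(v) are symmetric under $(s_1,p_1)\leftrightarrow(s_2,p_2)$, I may relabel and work under the standing assumption
\begin{equation*}
s_1<0\leq s_2,\qquad s<0.
\end{equation*}

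Because $s<0$, by definition $W^{s,p}(\mathbb{R}^n)=(W^{-s,p'}(\mathbb{R}^n))^*$, so it suffices to produce, for each $(u,v)$, a bounded linear functional on $W^{-s,p'}(\mathbb{R}^n)$. For Schwartz $u,v$ and $\varphi\in W^{-s,p'}(\mathbb{R}^n)$, the identity
\begin{equation*}
\langle uv,\varphi\rangle = \bigl\langle u,\,v\varphi\bigr\rangle_{W^{s_1,p_1}\times W^{-s_1,p_1'}}
\end{equation*}
suggests defining $uv$ by this pairing, provided I can establish the auxiliary embedding
\begin{equation*}
W^{s_2,p_2}(\mathbb{R}^n)\times W^{-s,p'}(\mathbb{R}^n)\hookrightarrow W^{-s_1,p_1'}(\mathbb{R}^n).
\end{equation*}
All three smoothness exponents here are nonnegative, so Theorem \ref{thm4.1} is the right tool, applied to the tuple $(\tilde{s}_1,\tilde{s}_2,\tilde{s})=(s_2,-s,-s_1)$ and $(\tilde{p}_1,\tilde{p}_2,\tilde{p})=(p_2,p',p_1')$.

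The one step that needs genuine care --- and is the main obstacle --- is verifying that the five hypotheses of the present theorem translate exactly into the hypotheses of Theorem \ref{thm4.1} for these dualized parameters. Using the identities $1/p'-1/p_1'=1/p_1-1/p$ and $1/p_2-1/p_1'=1/p_1+1/p_2-1$, a short calculation converts (iii) and the first inequality of (v) into the two scaling conditions of Theorem \ref{thm4.1}, (iv) into the strict inequality, and (i) combined with (v) into $\tilde{s}_i\geq\tilde{s}$. The ordering constraint $1<\tilde{p}_i\leq\tilde{p}<\infty$ demands $p_1\leq p$ (given) together with $p_2\leq p_1'$, and it is precisely the nonnegativity clause $n(1/p_1+1/p_2-1)\geq 0$ inside (v) that supplies the latter --- the subtle point worth flagging in the write-up.

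Once the auxiliary embedding is in hand, the proof concludes routinely. The estimate
\begin{equation*}
|\langle uv,\varphi\rangle|\leq \|u\|_{W^{s_1,p_1}}\|v\varphi\|_{W^{-s_1,p_1'}}\preceq \|u\|_{W^{s_1,p_1}}\|v\|_{W^{s_2,p_2}}\|\varphi\|_{W^{-s,p'}}
\end{equation*}
identifies $uv$ as an element of $(W^{-s,p'}(\mathbb{R}^n))^*=W^{s,p}(\mathbb{R}^n)$ with the desired bilinear bound. The resulting map agrees with pointwise multiplication on $\mathcal{S}(\mathbb{R}^n)\times\mathcal{S}(\mathbb{R}^n)$ by construction, and since $\mathcal{S}(\mathbb{R}^n)$ is dense in $W^{s_i,p_i}(\mathbb{R}^n)$ for $1<p_i<\infty$, the extension from Schwartz pairs to all of $W^{s_1,p_1}(\mathbb{R}^n)\times W^{s_2,p_2}(\mathbb{R}^n)$ is unique.
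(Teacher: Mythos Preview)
Your proposal is correct and follows essentially the same duality approach as the paper: both reduce to Theorem~\ref{thm4.1} by establishing the auxiliary embedding $W^{s_2,p_2}\times W^{-s,p'}\hookrightarrow W^{-s_1,p_1'}$ and verify its hypotheses via the same translations of (i)--(v). The only cosmetic difference is in the extension step: the paper first proves the estimate for $u\in W^{s_1,p_1}$ and $\varphi\in C_c^\infty$ and then passes to general $v$ by an approximation/a.e.-convergence argument, whereas you define $uv$ directly as the functional $\varphi\mapsto\langle u,v\varphi\rangle$ and invoke density only for uniqueness.
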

\begin{proof}{\bf (Theorem~\ref{thm4.3})}
 Since by assumption $s_1+s_2\geq 0$, $s_1$ and $s_2$ cannot
both be negative. WLOG we can assume $s_1$ is negative and $s_2$
is positive. Also note that by assumption $s \leq s_1$ so $s$ is
also negative.

Note that $C_c^{\infty}$ is dense in all Sobolev spaces on
$\reals^n$.
Considering this, first we prove that for $u\in W^{s_1,p_1},
\varphi\in C_c^{\infty}$
\begin{equation}\lab{eqdualproduct1}
\|u\varphi\|_{s,p}\preceq \|u\|_{s_1,p_1}\|\varphi\|_{s_2,p_2} .
\end{equation}
Note that
\begin{equation*}
\|f\|_{s,p}=\sup_{\psi \in C_c^{\infty}}\frac{|\langle
f,\psi\rangle_{W^{s,p}\times W^{-s,p'}}|}{\|\psi\|_{-s,p'}}.
\end{equation*}
Thus we just need to show that
\begin{equation*}
|\langle u\varphi, \psi\rangle_{W^{s,p}\times W^{-s,p'}}|\preceq
\|u\|_{s_1,p_1}\|\varphi\|_{s_2,p_2}\|\psi\|_{-s,p'}.
\end{equation*}
We have

\begin{equation*}
|\langle u\varphi, \psi \rangle_{W^{s,p}\times
W^{-s,p'}}|=|\langle u,\varphi\psi\rangle_{W^{s_1,p_1}\times
W^{-s_1,p_1'}}|\preceq \|u\|_{s_1,p_1}\|\varphi\psi\|_{-s_1,p_1'}.
\end{equation*}
Note that the first equality holds true because duality pairing
is an extension of the $L^2$ inner product of smooth functions. Now
it is enough to prove that
\begin{equation*}
\|\varphi\psi\|_{-s_1,p_1'}\preceq
\|\varphi\|_{s_2,p_2}\|\psi\|_{-s,p'}.
\end{equation*}
$-s_1, s_2, -s$ are all nonnegative. So, by Theorem \ref{thm4.1},
in order to ensure that the above inequality is true we just need
to check the followings:
\begin{align*}
&p'\leq p_1' \quad (\textrm{true because $p_1\leq p$}),\quad p_2\leq p_1'\quad (\textrm{true because $\frac{1}{p_1}+\frac{1}{p_2}\geq 1$}),\\
 &-s_1\leq -s \quad (\textrm{true because $s\leq s_1$}), \quad
 s_2\geq -s_1 \quad (\textrm{true because $s_1+s_2\geq 0$}),\\
& s_2-(-s_1) \geq n(\frac{1}{p_2}-\frac{1}{p'_1})\quad
(\textrm{true because $s_2+s_1 \geq
n(\frac{1}{p_2}+\frac{1}{p_1}-1)$}), \\
& -s-(-s_1)\geq n(\frac{1}{p'}-\frac{1}{p'_1}) \quad
(\textrm{true because $s_1-s \geq
n(\frac{1}{p_1}-\frac{1}{p})$}),\\
&s_2+(-s)-(-s_1)>
n(\frac{1}{p_2}+\frac{1}{p'}-\frac{1}{p'_1})\quad (\textrm{true
because $s_1+s_2-s> n(\frac{1}{p_2}+\frac{1}{p_1}-\frac{1}{p})$}).
\end{align*}
 Therefore the inequality
(\ref{eqdualproduct1}) holds for $u\in W^{s_1,p_1}$ and $\varphi
\in C_c^{\infty}$. To prove the general case we proceed as
follows: Suppose $v\in W^{s_2,p_2}$. There exists a sequence
$\varphi_k\in C_c^{\infty}$ such that $\varphi_k\rightarrow v$ in
$W^{s_2,p_2}$. Since $s_2\geq 0$, $W^{s_2,p_2}\hookrightarrow
L^{p_2}$ and therefore $\varphi_k\rightarrow v$ in $L^{p_2}$.
Consequently, by possibly passing to a subsequence,
$\varphi_k\rightarrow v$ a.e. which implies that
$u\varphi_k\rightarrow uv$ a.e..

On the other hand we have
\begin{equation*}
\|u(\varphi_i-\varphi_j)\|_{s,p}\preceq
\|u\|_{s_1,p_1}\|\varphi_i-\varphi_j\|_{s_2,p_2}.
\end{equation*}
It follows that $u\varphi_k$ is a Cauchy sequence in $W^{s,p}$ and
thus it is convergent to some function $w\in W^{s,p}$. Since
$u\varphi_k\rightarrow uv$ a.e., we can conclude that $w=uv$, that
is, $u\varphi_k\rightarrow uv$ in $W^{s,p}$. Finally
\begin{equation*}
\forall\, k\quad \|u\varphi_k\|_{s,p}\preceq
\|u\|_{s_1,p_1}\|\varphi_k\|_{s_2,p_2}, \quad
\end{equation*}
and so by passing to the limit as $k\rightarrow \infty$
\begin{equation*}
\|uv\|_{s,p}\preceq \|u\|_{s_1,p_1}\|v\|_{s_2,p_2}.
\end{equation*}
\end{proof}
\begin{theorem}[Multiplication theorem for Sobolev spaces on the whole space, negative exponents II]\lab{thm4.5}
Assume $s_i,s$ and $1 < p,  p_i < \infty$ ($i=1,2$) are real
numbers satisfying
\begin{enumerate}[(i)]
\item  $s_i \geq s$, 
\item  $\min\{s_1, s_2\}\geq0$ and $s<0$,
\item  $s_i-s\geq n(\dfrac{1}{p_i}-\dfrac{1}{p})$,
\item  $s_1+s_2-s>n(\dfrac{1}{p_1}+\dfrac{1}{p_2}-\dfrac{1}{p})\geq
0$.
\item $s_1+s_2> n(\dfrac{1}{p_1}+\dfrac{1}{p_2}-1)$. \quad
(\textrm{the inequality is strict})
\end{enumerate}
Then the pointwise multiplication of functions extends uniquely
to a continuous bilinear map
\begin{equation*}
W^{s_1,p_1}(\reals^n)\times W^{s_2,p_2}(\reals^n)\rightarrow
W^{s,p}(\reals^n).
\end{equation*}
\end{theorem}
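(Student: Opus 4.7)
The plan is to adapt the duality-and-density blueprint used in the proof of Theorem~\ref{thm4.3}, with one essential modification. Because both inputs now carry nonnegative smoothness ($s_1,s_2\geq 0$) while the target has negative smoothness ($s<0$), the trick from Theorem~\ref{thm4.3}---pushing the lone negative-exponent factor through duality and reducing the problem to the positive-exponent Theorem~\ref{thm4.1}---fails here: dualizing $u$ against $\varphi\psi$ requires bounding a product $W^{s_2,p_2}\times W^{-s,p'}\hookrightarrow W^{-s_1,p_1'}$ which, for generic $s_1>0$, again has two nonnegative-smoothness inputs and a negative-smoothness target, i.e.\ the same configuration. Instead, I will reduce directly to a plain trilinear H\"older estimate combined with the Sobolev embedding of Theorem~\ref{thm3.2}.

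Starting from the duality characterization
\begin{equation*}
\|u\varphi\|_{s,p}=\sup_{0\neq\psi\in C_c^\infty(\reals^n)}\frac{|\langle u\varphi,\psi\rangle|}{\|\psi\|_{-s,p'}},
\end{equation*}
applied first to $u,\varphi\in C_c^\infty(\reals^n)$---and extended afterwards to $\varphi\in W^{s_2,p_2}(\reals^n)$ by the approximation-plus-a.e.-subsequence argument already executed in Theorem~\ref{thm4.3} (valid because $s_2\geq 0$ guarantees $W^{s_2,p_2}\hookrightarrow L^{p_2}$)---it suffices to produce exponents $q_1,q_2,q_3\in(1,\infty)$ with $\tfrac{1}{q_1}+\tfrac{1}{q_2}+\tfrac{1}{q_3}=1$ together with the three continuous embeddings $W^{s_1,p_1}\hookrightarrow L^{q_1}$, $W^{s_2,p_2}\hookrightarrow L^{q_2}$, and $W^{-s,p'}\hookrightarrow L^{q_3}$. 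The classical H\"older inequality applied to $\int u\,\varphi\,\psi\,dx$ then closes the trilinear estimate.

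The heart of the proof is therefore exponent bookkeeping. By Theorem~\ref{thm3.2}, each $q_i$ is admissible precisely when $1/q_i\in[\max(0,1/p_i-s_i/n),\,1/p_i]$, and similarly $1/q_3\in[\max(0,1/p'+s/n),\,1/p']$. Eliminating $1/q_3=1-1/q_1-1/q_2$ turns the question into whether the Minkowski sum of admissible intervals for $1/q_1+1/q_2$ meets the target window $[1/p,\,\min(1,\,1/p-s/n)]$. The upper-end inequality $1/p\leq 1/p_1+1/p_2$ is exactly the nonnegative tail in assumption (iv). The lower-end compatibility splits into three sub-cases along the Sobolev threshold: when both $s_ip_i<n$, the required strict bounds by $1$ and by $1/p-s/n$ are delivered by assumptions (v) and (iv) respectively; when one index is critical or super-critical and the other sub-critical, assumption (iii) on the sub-critical index closes both bounds simultaneously; when both indices are (super-)critical, the lower end collapses to $0$ and there is nothing to check. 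The main technical obstacle is keeping this case analysis clean at the endpoint $s_ip_i=n$, where the Sobolev embedding forbids $q_i=\infty$ and one must use the strictness built into (iv) and (v) to place each $q_i$ strictly inside its admissible interval.
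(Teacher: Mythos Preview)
Your approach is correct and is essentially the paper's own argument, reorganized. The paper factors it through one intermediate Lebesgue exponent $r$: it proves $W^{s_1,p_1}\times W^{s_2,p_2}\hookrightarrow L^r$ (Claim~1, via Theorem~\ref{thm4.6} with target smoothness $0$) and then $L^r\hookrightarrow W^{s,p}$ (Claim~2, via the dual embedding $W^{-s,p'}\hookrightarrow L^{r'}$). Unwinding this gives precisely your trilinear H\"older with $q_3=r'$ and $\tfrac{1}{q_1}+\tfrac{1}{q_2}=\tfrac{1}{r}$. What the paper gains from this packaging is that the explicit choice
\[
\frac{1}{r}=\max\Bigl\{\frac{1}{p_1}-\frac{s_1}{n},\ \frac{1}{p_2}-\frac{s_2}{n},\ \frac{1}{p_1}-\frac{s_1}{n}+\frac{1}{p_2}-\frac{s_2}{n}+\epsilon,\ \frac{1}{p}\Bigr\}
\]
replaces your threshold case analysis on $s_ip_i\lessgtr n$ by a one-line verification that $\tfrac{1}{r}\in(0,1)$ and $\tfrac{1}{r}\leq \tfrac{1}{p}-\tfrac{s}{n}$, with (iv) and (v) entering exactly where you say. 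What your route gains is that the roles of assumptions (iii)--(v) are spelled out directly without invoking Theorem~\ref{thm4.6} as a black box. One small omission: you must also pass to the limit in $u$, not just in $\varphi$; since $s_1\geq 0$ the same a.e.-subsequence argument from Theorem~\ref{thm4.3} applies symmetrically.
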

\begin{proof}{\bf (Theorem~\ref{thm4.5})}
Let $\epsilon>0$ be such that
\begin{equation*}
\epsilon<\frac{1}{n}\min\{s_1+s_2-s-(\frac{n}{p_1}+\frac{n}{p_2}-\frac{n}{p}),
s_1+s_2-(\frac{n}{p_1}+\frac{n}{p_2}-n) \}
\end{equation*}
Let
\begin{equation*}
\frac{1}{r}=\max\{\frac{1}{p_1}-\frac{s_1}{n},
\frac{1}{p_2}-\frac{s_2}{n},
\frac{1}{p_1}-\frac{s_1}{n}+\frac{1}{p_2}-\frac{s_2}{n}+\epsilon,\frac{1}{p}\}.
\end{equation*}
Note that $r>0$ because $\frac{1}{r}\geq\frac{1}{p}>0$. Also
$\frac{1}{r}<1$ because each element in the set over which we are
taking the maximum is strictly less than $1$:
\begin{align*}
&\frac{1}{p_1}-\frac{s_1}{n}\leq \frac{1}{p_1}<1, \quad
\frac{1}{p_2}-\frac{s_2}{n}\leq \frac{1}{p_2}<1,\quad \frac{1}{p}<1\\
&\epsilon<
\frac{1}{n}[s_1+s_2-(\frac{n}{p_1}+\frac{n}{p_2}-n)]\Longrightarrow
\frac{1}{p_1}-\frac{s_1}{n}+\frac{1}{p_2}-\frac{s_2}{n}+\epsilon
<1.
\end{align*}
\begin{itemize}
\item \textbf{Claim 1:} $W^{s_1,p_1}(\reals^n)\times
W^{s_2,p_2}(\reals^n)\hookrightarrow L^r(\reals^n)$.
\item \textbf{Claim 2:} $L^r(\reals^n)\hookrightarrow
W^{s,p}(\reals^n)$.
\end{itemize}
Clearly if we prove \textbf{Claim 1} and \textbf{Claim 2}, then we are done.\\
\textbf{Proof of Claim 1}: All the exponents are nonnegative, so
it is enough to check the assumptions of Theorem \ref{thm4.6}.
\begin{align*}
& s_1-0\geq n(\frac{1}{p_1}-\frac{1}{r})\quad (\textrm{true
because $\frac{1}{r}\geq  \frac{1}{p_1}-\frac{s_1}{n}$})\\
& s_2-0\geq n(\frac{1}{p_2}-\frac{1}{r})\quad (\textrm{true
because $\frac{1}{r}\geq  \frac{1}{p_2}-\frac{s_2}{n}$})\\
& s_1+s_2> n(\frac{1}{p_1}+\frac{1}{p_2}-\frac{1}{r})\quad
(\textrm{true because $\frac{1}{r}>
\frac{1}{p_1}-\frac{s_1}{n}+\frac{1}{p_2}-\frac{s_2}{n}$}).
\end{align*}
\textbf{Proof of Claim 2}: We have
$(L^r(\reals^n))^{*}=L^{r'}(\reals^n)$ and
 $(W^{s,p}(\reals^n))^{*}=W^{-s,p'}(\reals^n)$. In what follows we
 will show that $W^{-s,p'}(\reals^n) \hookrightarrow
 L^{r'}(\reals^n)$; then since $W^{-s,p'}(\reals^n)$ is dense in
 $L^{r'}(\reals^n)$ ($C_c^{\infty}(\reals^n)\subseteq W^{-s,p'}(\reals^n)$ and $C_c^{\infty}(\reals^n)$ is dense in
 $L^{r'}(\reals^n)$), we are allowed to take the dual of both sides and it immediately follows that the claim is
 true.

 Note that by the definition of $r$, we have $\frac{1}{p}\leq\frac{1}{r}$ and therefore $p'\leq r'$. So, according to Theorem
 \ref{thm3.2},
 in order to show that $W^{-s,p'}(\reals^n) \hookrightarrow
 L^{r'}(\reals^n)$, it is enough to prove that $-s-\frac{n}{p'}\geq
 0-\frac{n}{r'}$, that is we need to prove that $\frac{1}{p}-\frac{s}{n}\geq
 \frac{1}{r}$. This is true because each element in the set over
 which we are taking the maximum in the definition of
 $\frac{1}{r}$ is less than or equal to $\frac{1}{p}-\frac{s}{n}$:
 \begin{align*}
 & \frac{1}{p}-\frac{s}{n}\geq \frac{1}{p_1}-\frac{s_1}{n}\quad (\textrm{true because $s_1-s\geq n(\frac{1}{p_1}-\frac{1}{p})$})\\
 & \frac{1}{p}-\frac{s}{n}\geq \frac{1}{p_2}-\frac{s_2}{n} \quad (\textrm{true because $s_2-s\geq n(\frac{1}{p_2}-\frac{1}{p})$})\\
 & \frac{1}{p}-\frac{s}{n}\geq \frac{1}{p_1}-\frac{s_1}{n}+\frac{1}{p_2}-\frac{s_2}{n}+\epsilon \quad (\textrm{true because $s_1+s_2-s\geq
 n(\frac{1}{p_1}+\frac{1}{p_2}-\frac{1}{p})+n\epsilon$})\\
 & \frac{1}{p}-\frac{s}{n}\geq \frac{1}{p}\quad (\textrm{true because $s<0$})
 \end{align*}
 \end{proof}
 \begin{remark}
 We note that our earlier article~\cite{HNT07b} contains some
 multiplication results for negative exponents that are similar to
 what we give above as Theorem~\ref{thm4.5}.
 However, a particularly important case is assumption (ii)
 in Theorem~\ref{thm4.5} above, which is a case
 we did not consider in~\cite{HNT07b}.
 \end{remark}

\section*{Acknowledgments}
   \label{sec:ack}

MH was supported in part by NSF Awards~1262982, 1318480, and 1620366.
AB was supported by NSF Award~1262982.

\appendix
\section{A Useful Version of the Multiplication Theorem}
   \label{app:spacesuseful}

In this appendix, using the pointwise multiplication properties
of Besov space, we will prove a very useful multiplication
theorem in Sobolev-Slobodeckij spaces;
this multiplication result is a key tool used in
our recent related article~\cite{holstbehzadan2015} to study
the Einstein constraint equations on asymptotically Euclidean manifolds.
A similar result, but for the case of compact manifolds, was used
in our 2009 article~\cite{HNT07b}.
We include this result to help complete the supporting literature
for the work in both~\cite{HNT07b,holstbehzadan2015}.
\begin{theorem}\lab{lemA13}
Let $s_i \geq s$ with $s_1+s_2\geq 0$, and $1 < p, p_i < \infty$
($i=1,2$) be real numbers satisfying
\begin{align*}
& s_i-s\geq n(\dfrac{1}{p_i}-\dfrac{1}{p}),\quad \textrm{(if $s_i=s \not \in \mathbb{Z}$, then let $p_i\leq p$)}\\
& s_1+s_2-s>n(\dfrac{1}{p_1}+\dfrac{1}{p_2}-\dfrac{1}{p})\geq 0.
\end{align*}
In case $s<0$, in addition let
\begin{equation*}
s_1+s_2> n(\dfrac{1}{p_1}+\dfrac{1}{p_2}-1) \quad
\textrm{(equality is allowed if $min(s_1,s_2)<0$)}.
\end{equation*}
Also in case where $s_1+s_2=0$ and $min(s_1,s_2) \not \in
\mathbb{Z}$, in addition let $\frac{1}{p_1}+\frac{1}{p_2}\geq 1$.
Then the pointwise multiplication of functions extends uniquely
to a continuous bilinear map
\begin{equation*}
W^{s_1,p_1}(\mathbb{R}^n)\times
W^{s_2,p_2}(\mathbb{R}^n)\rightarrow W^{s,p}(\mathbb{R}^n).
\end{equation*}
\end{theorem}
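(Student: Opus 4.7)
The plan is to reduce Theorem~\ref{lemA13} to the multiplication results already established in Sections~\ref{sec:bessel}--\ref{sec:negatives} by a careful case analysis, with one residual case handled by a Besov space multiplication result (consistent with the remark preceding the theorem that the proof rests on pointwise multiplication properties of Besov spaces). The splitting is driven by the sign of $s$ and the signs/integrality of $s_1$ and $s_2$.

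First suppose $s \geq 0$. If $s \in \mathbb{N}_0$, Theorem~\ref{thm4.6} applies immediately, since that theorem imposes no ordering between $p_i$ and $p$; the hypotheses transfer directly from Theorem~\ref{lemA13}. If $s \notin \mathbb{Z}$ and $p_1, p_2 \leq p$, I would apply Theorem~\ref{thm4.1}; the supplementary clause ``if $s_i = s \notin \mathbb{Z}$ then $p_i \leq p$'' in Theorem~\ref{lemA13} is precisely what is needed to reach the boundary case $s_i = s$ of Theorem~\ref{thm4.1}, and by Proposition~\ref{propholderwrong} this clause cannot be dropped. The remaining subcase, $s \geq 0$ non-integer with some $p_i > p$, forces $s_i > s$ strictly; here the previous Sobolev theorems do not suffice, and I would close the argument using the identification $W^{s,p} = B^s_{p,p}$ (valid for non-integer $s$) together with a Besov space multiplication theorem from the literature.

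For $s < 0$, I would split on the sign of $\min(s_1, s_2)$. If $\min(s_1, s_2) < 0$, then $s_1 + s_2 \geq 0$ forces the other $s_i$ non-negative, and Theorem~\ref{thm4.3} applies; the clause $1/p_1 + 1/p_2 \geq 1$ imposed in Theorem~\ref{lemA13} exactly when $s_1 + s_2 = 0$ is what is required by condition (v) of Theorem~\ref{thm4.3}, and the ``equality allowed'' proviso matches the non-strict inequality there. If $\min(s_1, s_2) \geq 0$ and $s < 0$, Theorem~\ref{thm4.5} applies, with the strict inequality $s_1 + s_2 > n(1/p_1 + 1/p_2 - 1)$ matching its hypothesis (v). In every case the extension to a continuous bilinear map follows by density of $C_c^\infty(\mathbb{R}^n)$ in each $W^{s_i, p_i}$, as in the closing arguments of Theorems~\ref{thm4.3} and~\ref{thm4.5}.

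The main obstacle is the non-integer $s \geq 0$ case with $p_i > p$, because on $\mathbb{R}^n$ no earlier theorem in the paper accommodates $p_i > p$ together with non-integer target smoothness; closing this gap requires appeal to a Besov space multiplication theorem, which is the stated strategy of the appendix. A secondary difficulty, routine but requiring care, is matching each boundary subcase of Theorem~\ref{lemA13} with the correct applicable earlier result under its precise hypotheses, particularly where equalities are allowed in the exponent relations.
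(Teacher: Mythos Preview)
Your case split and the handling of $s\geq 0$ and of $s<0$ with $\min(s_1,s_2)\geq 0$ match the paper's proof. The gap is in the case $s<0$ with $\min(s_1,s_2)<0$.

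You assert that Theorem~\ref{thm4.3} applies directly. It does not: Theorem~\ref{thm4.3} carries the standing hypothesis $p_i\leq p$ for both $i$, and its condition~(v) includes $\frac{1}{p_1}+\frac{1}{p_2}\geq 1$ unconditionally. Theorem~\ref{lemA13} imposes neither of these in general in this case: $p_i>p$ is permitted whenever $s_i>s$ (or $s_i=s\in\mathbb{Z}$), and $\frac{1}{p_1}+\frac{1}{p_2}\geq 1$ is required only in the corner case $s_1+s_2=0$ with $\min(s_1,s_2)\notin\mathbb{Z}$. So a direct citation of Theorem~\ref{thm4.3} leaves a genuine range of parameters uncovered.

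The paper closes this gap by recycling only the \emph{duality argument} from the proof of Theorem~\ref{thm4.3}: assuming without loss of generality $s_1<0\leq s_2$, one reduces to the nonnegative-exponent embedding
\[
W^{s_2,p_2}\times W^{-s,p'}\hookrightarrow W^{-s_1,p_1'},
\]
and then verifies this embedding using the already-established Case~1 (the $s\geq 0$ case, including the Besov-space argument that handles $p_i>p$), \emph{not} Theorem~\ref{thm4.1}. Under this duality, the two special clauses in Theorem~\ref{lemA13} (``if $s_i=s\notin\mathbb{Z}$ then $p_i\leq p$'' and ``if $s_1+s_2=0$, $\min(s_1,s_2)\notin\mathbb{Z}$, then $\frac{1}{p_1}+\frac{1}{p_2}\geq 1$'') are precisely what one needs on the dual side to invoke Case~1 at the boundary values $-s=-s_1$ and $s_2=-s_1$. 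Without bootstrapping through your own strengthened Case~1, your Case~2 argument does not go through.
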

\begin{proof}{\bf (Theorem~\ref{lemA13})}
In this proof we use the notations introduced in the beginning of
Section \ref{subsec:maintheorems}. We may consider three cases:
\begin{itemize}
\item \textbf{Case $1$}: $s\geq 0$.
\item \textbf{Case $2$}: $s<0$ and $min(s_1,s_2)<0$.
\item \textbf{Case $3$}: $s<0$ and $min(s_1,s_2)\geq 0$.
\end{itemize}
In what follows we study each of the above cases separately.
\begin{itemizeXALI}
\item \textbf{Case $1$}: See Theorem \ref{thm4.6} for the case where $s\in
\mathbb{N}_0$; see Theorem \ref{thm4.1} for the case where $p_1,
p_2\leq p$. It remains to prove the claim in the following cases:
\begin{enumerate}[i.]
\item $s_1>s$, $s_2=s$, $s\not \in \mathbb{N}_0$\\
      $p_1>p$, $p_2\leq p$
\item $s_1=s$, $s_2>s$, $s\not \in \mathbb{N}_0$\\
      $p_1\leq p$, $p_2> p$
\item $s_1>s$, $s_2>s$, $s\not \in \mathbb{N}_0$, at least one of $p_1$ or $p_2$ is greater than $p$\\
\end{enumerate}
Proofs of [i] and [ii] are completely similar. Here we only prove
item [i] and item [iii]. \\\\
\textbf{Proof of [i]:} Let
\begin{align*}
&\epsilon:=\frac{1}{4} \min\{s_1-s,
s_1-s-n(\frac{1}{p_1}-\frac{1}{p}),
s_1-n(\frac{1}{p_1}+\frac{1}{p_2}-\frac{1}{p})\}\,.
\end{align*}
We have
\begin{align*}
W^{s_1,p_1}\times W^{s_2,p_2}&\hookrightarrow
B^{s_1-\frac{\epsilon}{2}}_{p_1,p_1}\times
W^{s_2,p_2}\hookrightarrow
B^{s_1-\epsilon}_{p_1,p}\times W^{s_2,p_2}\\
&=B^{s_1-\epsilon}_{p_1,p}\times B^{s_2}_{p_2,p_2}\hookrightarrow
B^s_{p,p}=W^{s,p}\,.
\end{align*}
\textbf{Proof of [iii]:} Let
\begin{align*}
\epsilon:=\frac{1}{4} \min\{s_1-s, s_2-s,
s_1-s-n(\frac{1}{p_1}-\frac{1}{p}),
&s_2-s-n(\frac{1}{p_2}-\frac{1}{p}),\\
&s_1+s_2-s-n(\frac{1}{p_1}+\frac{1}{p_2}-\frac{1}{p})\}\,.
\end{align*}
Let $\tilde{q}_1$, $\tilde{q}_2$, and $\tilde{q}$ be numbers in
$(1,\infty)$ such that $\tilde{q}_1, \tilde{q}_2 \leq \tilde{q}$.
We have
\begin{align*}
W^{s_1,p_1}\times W^{s_2,p_2}\hookrightarrow
B^{s_1-\frac{\epsilon}{2}}_{p_1,p_1}\times
B^{s_2-\frac{\epsilon}{2}}_{p_2,p_2}\hookrightarrow
B^{s_1-\epsilon}_{p_1,\tilde{q}_1}\times
B^{s_2-\epsilon}_{p_2,\tilde{q}_2}\hookrightarrow
B^{s+\epsilon}_{p,\tilde{q}}\hookrightarrow B^s_{p,p}=W^{s,p}\,.
\end{align*}
In the above we have used the well-known embedding theorems for
Besov spaces together with the following multiplication theorem
that is proved in
\cite{jZ77}:\\
\emph{Let $0\leq s\leq s_i$, $1< p_i,p<\infty$, $1< q_i\leq q
<\infty $ $(i=1,2)$ be such that
\begin{align*}
&s_i-s\geq n(\frac{1}{p_i}-\frac{1}{p}),\quad i=1,2\,,\\
 & s_1+s_2-s> n(\frac{1}{p_1}+\frac{1}{p_2}-\frac{1}{p})\,.
\end{align*}
Then for $s\not \in \mathbb{N}$ one has $B^{s_1}_{p_1,q_1}\times
B^{s_2}_{p_2,q_2}\hookrightarrow B^{s}_{p,q}$.}
\item \textbf{Case $2$}: It follows from the argument given in the proof of Theorem \ref{thm4.3} that
without loss of generality we may assume $s_1<0$ and $s_2>0$.
According to the same argument it is enough to prove that
\begin{equation}\lab{importantembeddingproofa}
W^{s_2,p_2}\times W^{-s,p'}\hookrightarrow W^{-s_1,p'_1}\,.
\end{equation}
Since $s_2$, $-s$, and $-s_1$ are all nonnegative numbers, we can
use what was shown in \textbf{Case $1$} to prove the above
embedding. We have
\begin{itemize}
\item $s_1+s_2\geq 0 \Longrightarrow s_2\geq -s_1$.
\item $s_1\geq s \Longrightarrow -s\geq -s_1$.
\item If $s_2=-s_1$ (that is, if $s_1+s_2=0$) and $-s_1\not \in \mathbb{N}$, we must have $p_2\leq
p'_1$, i.e., $1\leq \frac{1}{p_1}+\frac{1}{p_2}$. (holds true by
assumption)
\item If $-s=-s_1$ and $-s_1\not \in \mathbb{N}$, we must have $p'\leq
p_1'$, i.e., $p_1\leq p$. (holds true by assumption)
\item $s_2+s_1\geq
n(\frac{1}{p_1}+\frac{1}{p_2}-1)=n(\frac{1}{p_2}-\frac{1}{p'_1})$.
\item $-s+s_1\geq
n(\frac{1}{p_1}-\frac{1}{p})=n(\frac{1}{p'}-\frac{1}{p'_1})$.
\item
$s_2-s+s_1>n(\frac{1}{p_1}+\frac{1}{p_2}-\frac{1}{p})=n(\frac{1}{p_2}+\frac{1}{p'}-\frac{1}{p'_1})$.
\end{itemize}
So according to what was proved in \textbf{Case $1$}, the
embedding (\ref{importantembeddingproofa}) holds true.
\item For \textbf{Case} $3$, see Theorem \ref{thm4.5}.
\end{itemizeXALI}
\end{proof}
\begin{remark}
Note that in case $s_i=s\not \in \mathbb{Z}$, the condition
$p_i\leq p$ together with $s_i-s\geq
n(\frac{1}{p_i}-\frac{1}{p})$ in fact implies that we must have
$p_i=p$.
\end{remark}

\bibliographystyle{abbrv}
\bibliography{refs}
\end{document}